\newtheorem{Thm}{Theorem}[section]
\newtheorem{Lm}[Thm]{Lemma}
\newtheorem{Prop}[Thm]{Proposition}
\newtheorem{Def}[Thm]{Definition}
\newtheorem{Rk}{Remark}[section]
\newcommand{\al}{\alpha}
\newcommand{\cR}{\mathcal{R}}
\newcommand{\cG}{\mathcal{G}}
\newcommand{\cF}{\mathcal{F}}
\newcommand{\cK}{\mathcal{K}}
\newcommand{\cN}{\mathcal{N}}
\newcommand{\cM}{\mathcal{M}}
\newcommand{\cS}{\mathcal{S}}
\newcommand{\R}{\mathbb{R}}
\newcommand{\Z}{\mathbb{Z}}
\newcommand{\N}{\mathbb{N}}
\newcommand{\dt}{\delta}
\newcommand{\lb}{\lambda}
\newcommand{\eps}{\varepsilon}
\newcommand{\T}{\mathbb{T}}
\def\bk{\mathbf{k}}
\def\bp{\mathbf{p}}
\title{Closing lemma and KAM normal form}
\author{Jinxin Xue}
\email{jxue@tsinghua.edu.cn}
\address{Department of Mathematics, Tsinghua University, Beijing, China, 100084. }
\begin{document}
\maketitle
\begin{abstract}
	In this paper, we develop an approach to the problem of closing lemma based on KAM normal form. The new approach differs from existing $C^1$ perturbation approach and spectral approach, and can handle the high regularity, high dimensional cases and even Riemannian metric perturbations. Moreover, the proof is constructive and effective. We apply the method to the original nearly integrable setting of Poincar\'e and confirm several old and new conjectures with weak formulations. First, for Poincar\'e's original setting of nearly integrable systems, we prove that for typical perturbations, periodic orbits are asymptotically dense as the size of perturbation tends to zero. Second, we prove that typical smooth perturbation of the geodesic flow on the flat torus has asymptotically dense periodic orbits, which partially solves an open problem since Pugh-Robinson's $C^1$-closing lemma. Third, we prove that for typical Hamiltonian or contact perturbation of the geodesic flows of the ellipsoid has asymptotically dense orbit on the energy level, which  enhances the recent researches on strong closing lemma, and also confirms partially a conjecture of Fish-Hofer in this setting. We also discuss the relation of our models to the recent researches on many-body localization in physics. 
\end{abstract}
\tableofcontents

\addtocontents{toc}{\setcounter{tocdepth}{1}} 
\section{Introduction}
In this paper, we study the problem of closing lemma in the original setting of Poincar\'e. In his work on the $N$-body problem, Poincar\'e intensively searched for periodic orbits. He explained his reasons as follows (c.f. Section 36 of Chapter III of \cite{P}):

{\it Here is a fact that I could not prove rigorously, but which nevertheless seems very likely to me. Given equations of the form defined in Section 13 and an arbitrary solution of these equations, one can always find a periodic solution $($with a period which, admitedly, may be very long$)$, such that the difference between the two solutions is arbitrarily small. In fact, what makes these solutions so precious to us, is that they are, so to say, the only opening through which we can try to enter a place which, hitherto, was deemed inaccessible.}

Here the equations under consideration are the canonical equations of nearly integrable Hamiltonian of the form
\begin{equation}\label{EqHam}
H_\eps(I,\theta)=h(I)+\eps f(I,\theta), \quad (I,\theta)\in T^*\T^N	
\end{equation}
used to model the $N$-body problem. The last paragraph is the origin of the problem of closing lemma. 

\subsection{The torus case and KAM theory}
The modern formulation of the conjecture of closing lemma states that \emph{a nonwandering point can be closed into a periodic point by an arbitrarily small perturbation of the system}, where a nonwandering point is a point which does not have a neighborhood disjoint from its images under iterates. This, combined with Poincar\'e recurrence theorem, would imply the denseness of periodic orbits. With this formulation, it was proved by Pugh \cite{P1} that closing lemma holds for $C^1$ generic diffeomorphisms, and later with Robinson \cite{PR} for $C^2$ generic Hamiltonian systems. We refer readers to an elegant review \cite{P2} by Pugh for some historical remarks.  However,  this approach is only local, and it is known that it fails for function spaces with higher regularity. 

The work of Pugh and Robinson left open the following problem (c.f. Section 10 of \cite{PR})

{\bf Problem:} {\it Prove the closing lemma for Riemannian metric perturbation of geodesic flows.}

The difficulty of the problem is that Riemannian metric perturbations cannot be local, since a perturbation on the base manifold will simultaneously perturb the tangent bundle. 
In \cite{Ri}, Rifford proved closing lemma of geodesic flows for $C^1$ generic Riemannian metrics. Note that the regularity is lower than the $C^2$ Hamiltonian perturbations of \cite{PR}. 

Our first result proves a  version of $C^r$ closing lemma for typical Riemannian metric perturbation of the flat torus $\T^N$ for any $r$ large.  We start with a formulation of the typicality. 

We consider Riemannian metric $ds^2=\sum_{i=1}^N d\theta_i^2$ on the torus $\T^N$ and its perturbation $ds^2_{\eps A}=\sum_{i,j=1}^N (\dt_{ij}+\eps a_{ij}(\theta))d\theta_id\theta_j. $ Let us denote by $\mathrm{Sym}^r(\T^N)$ the space of $N\times N$ symmetric matrices each of whose entries is in $C^r(\T^N)$. For $A:=(a_{ij})\in \mathrm{Sym}^r(\T^N)$, we introduce its norm as $\|A\|:=\sum_{i\leq j} \|a_{ij}\|_{C^r}$. We also denote by $\cS^r$ the unit sphere in $\mathrm{Sym}^r(\T^N)$.

\begin{Def}
	We say that a sequence of sets $\cR_n\subset \mathcal S^r$ is \emph{asymptotically residual}, if there is a sequence $c_n\to0+$ and a sequence of first category set $\cN_n\subset\mathcal S^r$ such that we have $\cR_n\cup B^2_{c_n}(\cN_n)=\mathcal S^r$ for all $n\in \N$  where $B^2_{c_n}(\cN_n)$ means the union of balls of radius $c_n$ in the $C^2$-norm centered on $\cN_n$. 
\end{Def}

\begin{Thm}\label{ThmRm}
	There exists $r_0>0$ such that for all $r>r_0,$ the following holds. 
	There is a sequence of asymptotically residual set $\cR_n\subset \cS^r$ and a sequence of $\eps_n\to 0$, such that for  any $\dt>0$, there exists $n_\dt$ such that for any $\eps<\eps_{n_\dt}$, any $(a_{ij})\in \cR_{n_\dt}$ and any ball $B_\dt$ of radius $\dt$ in the unit tangent bundle, the  Riemannian metric $ds^2_{\eps A}$ has a closed geodesic intersecting  $B_\dt. $ 
\end{Thm}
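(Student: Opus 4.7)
The plan is to convert the Riemannian perturbation into a Hamiltonian perturbation of the integrable geodesic flow on $\T^N$, then run a resonant KAM normal form around every rational direction on the unit cosphere up to a threshold growing with $n$. Writing the cometric as $(g_0+\eps A)^{-1}=\mathrm{Id}-\eps A+O(\eps^2)$, the geodesic flow of $ds^2_{\eps A}$ is the Hamiltonian flow on $T^*\T^N$ of
\begin{equation*}
H_\eps(I,\theta) \;=\; \tfrac12|I|^2 \;-\; \tfrac{\eps}{2}\langle A(\theta)I,I\rangle \;+\; O(\eps^2),
\end{equation*}
which is exactly Poincar\'e's form \eqref{EqHam} with $h(I)=\tfrac12|I|^2$ and $f(I,\theta)=-\tfrac12\langle A(\theta)I,I\rangle$. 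Via the flat metric $UT\T^N$ is identified with the energy level $\{|I|=1\}\subset T^*\T^N$, so proving $\dt$-density of closed geodesics of $ds^2_{\eps A}$ in $UT\T^N$ is the same as $\dt$-density of periodic orbits of $H_\eps$ on $\{|I|=1\}$.

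For each primitive integer vector $m\in\Z^N$ with $|m|\le n$, set $\omega_0=m/|m|\in S^{N-1}$ and introduce the resonant sublattice $\Lb_{\omega_0}=\{k\in\Z^N\colon k\cdot m=0\}$. In a shrinking neighborhood of the action $I=\omega_0$, the flat twist $\pt^2 h=\mathrm{Id}$ yields small-divisor lower bounds on $|k\cdot I|$ for $k\in\Z^N\setminus\Lb_{\omega_0}$ with $|k|\le n$, so a standard KAM iteration produces a canonical change of coordinates $\Phi_\eps$ after which
\begin{equation*}
H_\eps\circ\Phi_\eps \;=\; \tfrac12|I|^2 + \eps\,\bar f_{\omega_0}(I,\theta) + R_{\eps,\omega_0}(I,\theta),
\end{equation*}
where $\bar f_{\omega_0}$ retains only the Fourier modes of $f$ indexed by $\Lb_{\omega_0}$ (so it is independent of the longitudinal angle $m\cdot\theta$), and $R_{\eps,\omega_0}$ is $o(\eps)$ in a norm depending on $r$. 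On the resonant torus $\{I=\omega_0\}$ the $\eps$-order truncated system, after quotienting by the flow direction $\langle m\rangle$, reduces to the gradient-like dynamics of $\bar f_{\omega_0}(\omega_0,\cdot)\colon\T^N/\langle m\rangle\to\R$. Each Morse critical point of this function lifts to a hyperbolic or elliptic closed orbit of the truncated Hamiltonian, which I would continue to a genuine closed geodesic of $H_\eps$ by an implicit function argument against $R_{\eps,\omega_0}$.

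For the asymptotic residuality, set $\Omega_n:=\{m/|m|:m\in\Z^N\text{ primitive},\ |m|\le n\}$. The map $A\mapsto\bar f_{A,\omega_0}$ is linear in $A$, so standard Sard--Smale transversality shows that for a residual set of $A\in\cS^r$ the function $\bar f_{A,\omega_0}(\omega_0,\cdot)$ is simultaneously Morse for every $\omega_0\in\Omega_n$; extracting the quantitative version (Hessian bounded below by some $\kappa_n>0$) and requiring stability under $C^2$-perturbations of size $c_n\ll\kappa_n$ defines $\cR_n$ and realizes the asymptotic-residuality sequence. Choosing $\eps_n$ much smaller than $\kappa_n$, weighted against the normal-form error, ensures uniform persistence, and picking $n_\dt$ so large that $\Omega_{n_\dt}$ is $\dt/2$-dense in $S^{N-1}$ while the critical-point families across $\omega_0\in\Omega_{n_\dt}$ are $\dt/2$-dense in $\T^N$ delivers the claim. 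The main obstacle is coordinating the three competing scales: the Morse scale $\kappa_n$, the normal-form remainder $R_{\eps,\omega_0}$, and the shrinking action-space regions in which the normal form around each $\omega_0\in\Omega_n$ is valid; neighbouring resonances in $\Omega_n$ lie at distance $\sim 1/n^2$ and threaten to spoil the averaging. This is where the regularity threshold $r_0$ enters, via a precise dependence of the truncation error on $r$, $\eps$, and $|m|$, and I expect the bulk of the technical work to lie in arranging these scales coherently as $n\to\infty$.
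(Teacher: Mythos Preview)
Your overall strategy coincides with the paper's: pass to the Hamiltonian $H_\eps=\tfrac12|I|^2+\eps f$, run a resonant normal form at each rational direction $\bp/|\bp|$ on the unit cosphere, obtain periodic orbits from nondegenerate critical points of the resonant average, and deduce $\dt$-density from the density of rationals on $S^{N-1}$. The asymptotic-residual mechanism you describe is also the paper's.

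Two technical points in your sketch are genuinely incomplete, and the paper supplies specific devices for them. First, the ``standard KAM iteration'' does not apply as written: after centering at $I=\omega_0$ the twist $\tfrac12|I|^2$ carries no factor $\eps$, so the Hamiltonian is not of the form \eqref{EqHamDecomp} and the normal-form remainder will not decay fast enough (the paper says this explicitly in Section~\ref{STorus}). The fix is a $\sqrt\eps$-blowup $I\mapsto\sqrt\eps\,I$, $H\mapsto H/\sqrt\eps$, $t\mapsto\eps t$, which turns the system into $\bp/|\bp|\cdot I+\sqrt\eps\,(\tfrac12|I|^2+Pf+Qf)$ and restores the structure needed for Proposition~\ref{PropNF}. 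Second, your requirement ``$R_{\eps,\omega_0}=o(\eps)$'' is far too weak. The resonant average $Pf$ consists only of Fourier modes $\bk$ with $\bk\cdot\bp=0$, hence $|\bk|\gtrsim|\bp|$, so $\|Pf\|_{C^2}$ is itself of order $\gamma:=|\bp|^{-(r-2)}$, not order one. The Hessian at your critical points is therefore of size $\gamma$, and the implicit-function continuation succeeds only if the remainder is pushed below that scale; the paper's normal form (Proposition~\ref{PropNFTorus}, the torus analogue of Proposition~\ref{PropNF}) achieves $\|\tilde F\|_{C^2}\le\gamma^{1+\beta}$ via a KAM scheme with superexponential decay, and this is exactly what forces the regularity threshold $r_0$. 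You correctly flag the scale coordination as the crux; the $\sqrt\eps$-blowup together with the sharp $\gamma^{1+\beta}$ remainder bound are precisely the ingredients that resolve it.
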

As we will see in the proof, we can make the dependence of $\eps_{n_\dt}$ on $\dt$ explicit, for instance $\eps_{n_\dt}=\dt^{10}$, thus the proof is effective. 

We illustrate the idea of the asymptotically residual set using an example. It is known that Morse functions (functions all whose critical points are nondegenerate) are generic in   $C^r(M)$, $r\geq 2$, where $M$ is a compact manifold. For a sequence $\mathbf c=\{c_n\}$ with $c_n\to 0+$, we introduce $\cN_n$ to be the set of functions that are not Morse, i.e. some critical points are degenerate and $\mathcal R_{n}$  to be the set of $C^r$ functions such that for all $f\in \cR_n$, we have the absolute values of the eigenvalues of $\mathrm{Hess}f$ at each critical point are bounded away from $0$ by $c_n^2$. A function on the boundary of $\cR_n$ can be perturbed into one with degenerate critical point, by a perturbation of $C^2$-norm at most $c_n$ supported in an $O(1)$-neighborhood of a critical point.

The method of the proof is the normal form theory in Hamiltonian dynamics. Though our method is perturbative, it differs drastically from the approach of closing a nonwandering point. Indeed, our approach is to show that certain periodic orbits in the unperturbed system persist in the perturbed one.  Generalizing the proof of the last theorem, we get the following version of closing lemma in Poincar\'e's original setting.

Let $U\subset \R^N$ be a bounded and open subset and $\bar U$ its closure. We consider $C^r(\bar U\times \T^N)$ the space  $C^r$ functions on $\bar U\times \T^N$. Denote by $\mathcal S^r(\bar U\times \T^N)\subset C^r(\bar U\times \T^N)$ its unit sphere consisting of functions with $\|f\|_{C^r}=1$. Similarly, we define $C^r(\bar U)$ and $\cS^r(\T^N)$. 

\begin{Thm}\label{ThmTorus}	
		There exists $r_0>0$ such that for all $r>r_0,$ the following holds. There is a residual set $\cR\subset C^r(\bar U)$, such that for each $h\in \cR$, there exists a sequence of asymptotically residual set $\cR_n\subset \cS^r(\bar U\times \T^N)$ $($or $\cS^r( \T^N))$ and a sequence of $\eps_n\to 0$, such that for  any $\dt>0$, there exists $n_\dt$ such that for any $\eps<\eps_{n_\dt}$, any $f\in \cR_{n_\dt}$ and any ball $B_\dt$ of radius $\dt$ in $T^*\T^N$ centered on $U\cap h^{-1}(1)$, the Hamiltonian system \eqref{EqHam}  has a periodic orbit intersecting  $B_\dt. $ 
	
\end{Thm}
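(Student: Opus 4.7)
The plan is to extend the argument of Theorem~\ref{ThmRm} (where $h(I)=\tfrac12|I|^2$) to a generic integrable Hamiltonian $h\in\cR$. The residual set $\cR\subset C^r(\bar U)$ will be defined by a Kolmogorov--R\"ussmann-type nondegeneracy: $\nabla h$ is a local diffeomorphism $U\to\R^N$ and its restriction to each regular energy level $h^{-1}(E)$ meets rational frequency vectors on a quantitatively dense subset. This is a standard $C^2$-open dense condition, hence residual in $C^r$ for $r\geq 2$, and it ensures that the resonant tori of the unperturbed flow are dense in $U\cap h^{-1}(1)$ with a controlled separation at each scale.

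Fix a resolution $\dt>0$. First I would select a finite family of resonant frequencies $\omega^*_k=p_k/q_k$, $k=1,\ldots,K_\dt$, whose preimages $I^*_k\in U\cap h^{-1}(1)$ form a $\dt/10$-dense subset. Each unperturbed torus $\{I=I^*_k\}$ is totally foliated by periodic orbits of period $q_k$, parametrised by initial condition on a transverse $\T^{N-1}$. A single step of resonant averaging at each $I^*_k$ brings the Hamiltonian~\eqref{EqHam} in local symplectic coordinates to the normal form
\[ H_\eps\circ\Phi_k \;=\; h(I)+\eps Z_k(I,\theta)+O(\eps^{1+\al}), \]
where $Z_k$ is the average of $f$ along the flow of $\omega^*_k$ and descends to a function on $\T^{N-1}$. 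Each nondegenerate critical point of $Z_k(I^*_k,\cdot)$ then lifts, via the implicit function theorem applied in the rescaled slow variables, to a true periodic orbit of the full system, provided $\eps$ is small compared to the smallest Hessian eigenvalue of $Z_k$ at that point.

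For generic $f\in\cS^r(\bar U\times\T^N)$, each of these finitely many functions is Morse, so supplies at least $2^{N-1}$ critical points per resonance. Combining the $\dt$-density of the $I^*_k$'s in the action direction with the variation of the critical points of $Z_k$ in the transverse angle direction as we refine the resonance list --- different primitive $p_k$'s produce averaged potentials with critical points at different locations --- a counting argument shows that enough resonances together furnish a $\dt$-dense set of periodic orbits in $h^{-1}(1)\cap U\subset T^*\T^N$. The asymptotically residual set $\cR_n$ is defined as the set of $f$ for which, for every resonance $(p_k,q_k)$ needed at resolution $1/n$, the averaged potential $Z_k(I^*_k,\cdot)$ is Morse with Hessian eigenvalues bounded below by some fixed $c_n$; this is open dense, and functions on its boundary are $C^2$-close, at distance $O(c_n)$, to functions outside $\cR_n$ via local perturbations near the degenerating critical points, matching the definition of asymptotic residuality exactly as in the Morse example discussed after Theorem~\ref{ThmRm}. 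The scale $\eps_n$ is then chosen smaller than an appropriate power of $c_n$ so that the implicit function theorem applies uniformly.

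The hardest step is expected to be the simultaneous control of angular density and quantitative nondegeneracy: as $n$ grows we must invoke resonances of larger denominators $q_k$, which require more refined normal forms (possibly several steps of averaging) and tighten the smallness requirement on $\eps$ relative to $c_n$. Balancing these scalings so that $\eps_n\to 0$ and $c_n\to 0$ at compatible rates, while maintaining $\dt$-density of the constructed periodic orbits in $h^{-1}(1)\cap U$ as $\dt\to 0$, is the central quantitative challenge, and it is precisely here that the $C^r$-regularity threshold $r_0$ will be determined.
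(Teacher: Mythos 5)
Your high-level plan matches the paper's: restrict to $h$ with nondegenerate Hessian on $h^{-1}(1)$ (the paper's Lemma~\ref{Lmh} via Proposition~\ref{PropGeneric}), approximate by rational frequency vectors $\bp/q$, reduce to a resonant normal form, and locate a nondegenerate critical point of the resonant average as a periodic orbit of the truncated system, with the asymptotically residual set $\cR_n$ encoding a quantitative Morse bound on the averaged potential. But the single step of resonant averaging is the gap, and it is the central technical obstruction the paper is designed to overcome. After removing the torus average $\langle f\rangle$, the resonant potential $Z_k(I^*_k,\cdot)$ contains only Fourier modes with $\bk\cdot\bp_k=0$; since $|\bp_k|\sim q_k$ these modes have $|\bk|\gtrsim q_k$, so $\|Z_k-\langle f\rangle\|_{C^2}\lesssim q_k^{-(r-2)}\|f\|_{C^r}$, which for $r>r_0$ is extremely small. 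Meanwhile the remainder of one averaging step is not $O(\eps^{1+\al})$ uniformly in $q_k$: the Lie bracket $\{W,Q_<f\}$ contributes $O(\eps^2q_k)$ because $\|W\|\sim q_k\|Q_<f\|$, and $\|Q_<f\|_{C^2}$ is order one, not small. For the implicit function theorem you need the remainder to be dwarfed by $\eps\cdot q_k^{-(r-2)}$, i.e.\ $\eps q_k^{r-1}\ll c_n$, which for large $r$ and the $q_k\sim\dt^{-1}$ you need for density is catastrophically restrictive. This asymmetry between $\|Pf\|_{C^2}\sim q^{-(r-2)}$ and $\|Qf\|_{C^2}\sim 1$ is precisely what forces the paper's iterative KAM scheme (Propositions~\ref{PropNF}, \ref{PropNFTorus}) with superexponentially decaying $\eps_n$: only after $O(\log r)$ steps does the remainder drop below $\gamma^{1+\beta}$ with $\gamma$ comparable to $q^{-(r-2)}$, while simultaneously tracking that $G$ and $\bar F$ are not spoiled by the iterated images of $Qf$. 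Your final paragraph flags this balance as ``the central quantitative challenge'' and floats ``possibly several steps,'' but that hedge is exactly where the proof lives; without the full KAM normal form estimates of Section~\ref{SNF} the argument does not close.

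Two secondary omissions: first, expanding $h$ around $y_*=Dh^{-1}(\bp/q)$ produces a quadratic term $\tfrac12\langle D^2h(y_*)I,I\rangle$ carrying no $\eps$ factor, so before any averaging one must perform the $\sqrt\eps$-blowup of Section~\ref{SSSBlowup} to cast the problem in nearly-integrable form with effective small parameter $\sqrt\eps$ and effective resonance size $\|\bp\|\le\eps^{-\nu}$; your scheme keeps $h(I)$ at order one and never normalizes this. Second, the eigenvalue threshold defining $\cR_n$ should not be $c_n$ but $c_n q_n^{-(r-2)}$, reflecting the intrinsic scale of the resonant Hessian; writing it as $c_n$ obscures the very smallness that makes the KAM iteration unavoidable.
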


Here the first residual set $\cR$ is chosen to guarantee that $h\in \cR$ has nondegenerate Hessian for almost every point in $h^{-1}(1)$, i.e.  the classical Kolmorogov's nondegeneracy is satisfied almost everywhere. It is standard to assume certain nondegeneracy conditions in both KAM theory (\cite{K,A,M}) and Nekhoroshev Theorem (\cite{N,L}). The degenerate cases are also studied in literature with in particular the prototypical unperturbed Hamiltonian $h(I)=\omega\cdot I$ where $\omega\in \R^N$, mostly with Diophantine condition on $\omega$. If $I$ is small, the relevant problem is to study the stability of elliptic fixed point.  It is known that there is a positive measure set of Lagrangian invariant tori accumulating to the elliptic fixed point in the analytic category \cite{EFK2} and that the elliptic fixed point may be isolated in the smooth category \cite{FS}. If $I$ is not small, then the relevant problem is the stability of KAM tori. Similar results were obtained in \cite{EFK1}.  There is also work on effective long time stability of the elliptic fixed point \cite{Ni}.

\subsection{The ellipsoid case and strong closing lemma}As Pugh's $C^1$ closing lemma has been well-known for a long time,  it came as a big surprise when Irie proved the denseness of periodic orbits for $C^\infty$-generic Reeb flows (c.f. \cite{I1}) on a 3-dimensional contact manifold and later for $C^\infty$-generic surface Hamiltomorphisms (c.f. \cite{AI}). The approach of Irie is global. He uses the spectral information given by ECH. The idea turns out to be very fruitful. For instance, similar idea was used in the min-max theory of minimal surfaces solving a conjecture of Yau (c.f. \cite{IMN}) and proving an equidistribution result (c.f. \cite{MNS}). See also \cite{CPZ,EH} etc. 

Then it is natural to pursue this stream of ideas in the higher dimensional case. For instance, it is conjectured in Fish and Hofer that periodic orbits are dense, for instance, for the Reeb flow of $C^\infty$-generic contact hypersurfaces in $\R^{2N}$.  We refer readers to Conjecture 1 of \cite{FH} for precise statement, speculations and more references. 


In a recent paper \cite{I2}, Irie introduced a notion of \emph{strong closing property} formlated as follows. Let $M$ be a manifold of dimension $2N-1$ and $\lambda$ a contact form that is a one-form satisfying $\lambda \wedge (d\lambda)^{N-1}\neq 0$ at each point of $M$. This defines a Reeb vector field $R_\lambda$ via $\iota_{R_\lambda}d\lambda\equiv0$ and $\lambda(R_\lambda)\equiv1$.  The \emph{positive strong closing property} is defined as: \emph{for all $\psi\in C^\infty(M,\R_{\geq 0})$ nonconstant, there exists $t\in [0,1]$ such that there is a closed Reeb orbit for the contact manifold $(M, (1+t \psi)\lambda)$ that intersects the support of $\psi$. }

Though the strong closing property is weaker than existing dense periodic orbits, to our understanding, it is strong in the sense that a periodic orbit is created by a perturbation $\psi$, no matter how small the support of $\psi$ is. 

Irie also conjectured that the strong closing property holds for $(\partial E_a, \lambda)$ where  $\lambda=\frac{1}{2}\sum_{i=1}^N y_idx_i-x_idy_i$ is the standard contact form and $E_a$ is the ellipoid
$$E_a=\left\{(x_1,\ldots, x_n, y_1,\ldots, y_N)\in \R^{2N}\ |\ \sum_{i=1}^N 
\frac{\pi (x_i^2+y_i^2)}{a_i}\leq 1\right\}$$
where $a=(a_1,\ldots, a_N)$ satisfies $0<a_1\leq a_2\leq \ldots\leq a_N$. 
The conjecture was proved by Chaidez-Datta-Prasad-Tanny\cite{CDPT} using tools from contact homology and by Cineli and Seyfaddini \cite{CS} using spectral invariants. The latter also considers strong closing property for some very chaotic models such as Anosov-Katok construction for pseudo rotations. Note that the Reeb flow of $(\partial E_a, \lambda)$ is the same as a Hamiltonian flow of the Hamiltonian 
\begin{equation}\label{EqHam0}
	H(x,y)=\sum_{i=1}^N 
	\frac{\pi (x_i^2+y_i^2)}{a_i}	
\end{equation}
on the energy level 1. A small perturbation of the contact form induces a perturbed Reeb flow that can be viewed as a Hamiltonian system slightly perturbing the Hamiltonian \eqref{EqHam0}. It is standard to introduce a symplectic transformation $(x_i,y_i)=(\sqrt I_i \cos\theta_i, \sqrt I_i\sin \theta_i)$. Thus we model the problem as a nearly integrable system 
\begin{equation}\label{EqHamMain}
	H_\eps (I,\theta)=\omega\cdot I+\eps f(I,\theta), \quad (I,\theta)\in T^*\T^N,	
\end{equation}
where $\omega=(\omega_1,\ldots, \omega_N)$ and $\omega_i=\frac{\pi}{a_i},\ i=1,\ldots, N. $ 

Note that this is exactly the prototypical degenerate case of the KAM theory and Nekhoroshev theorem mentioned above. 
We have the following results concerning Irie's conjecture. We state our results in the language of nearly integrable Hamiltonian dynamics, which have counterparts for Reeb flows by the correspondence in Appendix \ref{SRelation}. 

Let $C_{c}^r(T^*\T^N)$ be the space of compactly supported $C^r$ functions on $T^*\T^N$. We first perform a splitting $C_{c}^r(T^*\T^N)=C_{c}^r(\R^N)\oplus (C_{c}^r(T^*\T^N)/C_{c}^r(\R^N))$ such that each $f\in C_{c}^r(T^*\T^N)$ is split into $f=\langle f\rangle+[f]$ where $\langle f\rangle(I)=\int_{\T^N}f(I,\theta)d\theta\in C_{c}^r(\R^N)$ and $[f]=f-\langle f\rangle\in C_{c}^r(T^*\T^N)/C_{c}^r(\R^N). $

Let $D\subset \Delta^{N-1}:=\{\omega\cdot I=1,\ I_i>0,\ i=1,2,\ldots,N\}$ be an open set and $\bar U$ its closure with $\bar D\subset \Delta^{N-1}$. Let $C^r_c(\bar D)$ be the set of compactly supported $C^r$ functions whose support contains $\bar D$, and let $\Sigma^r$ be either $C^r(\T^N)/\R$ or  $C_{c}^r(T^*\T^N)/C_{c}^r(\R^N)$.  Let $\cS^r$ be the unit sphere in $\Sigma^r$. 
\begin{Thm}\label{Thm1}
		There exists $r_0>0$ such that for all $r>r_0,$ the following holds. There is a residual set $\cR\subset C_c^r(\bar D)$ such that for all $\omega\in \R^N_+$ and each $\langle f\rangle\in \cR$, there is a sequence of intervals $(a_n,b_n)\subset (0,1)$ with $b_n\to 0$ and   asymptotically residual sets  $\mathcal R_n\subset \cS^r$, such that for each $\eps_n\in (a_n,b_n)$ and each $[f]\in \mathcal R_n$, the Hamiltonian system $H_{\eps_n}$ in \eqref{EqHamMain} admits at least $2^{N-1}$ distinct periodic orbits  $\gamma_{\eps_n,i},\ i=1,2,\ldots,2^{N-1},$ all intersecting the support of $f$ on the energy level 1. Moreover, if $\omega$ is nonresonant $($i.e. there is no integer vector $\bk\neq 0$ such that $\bk\cdot\omega=0)$, then for any sequence $\eps_n\in(a_n,b_n)$, the union of all periodic orbits $\cup_{i,n}\gamma_{\eps_n,i}$  is dense, when projected to $\T^N$. 
\end{Thm}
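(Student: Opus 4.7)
The strategy is to reduce this degenerate case to the non-degenerate Kolmogorov framework of Theorem \ref{ThmTorus} by absorbing the angular average of the perturbation into the unperturbed Hamiltonian. Writing $f=\langle f\rangle+[f]$, I would split
\[H_\eps(I,\theta)=h_\eps(I)+\eps[f](I,\theta),\qquad h_\eps(I):=\omega\cdot I+\eps\langle f\rangle(I).\]
The residual set $\cR\subset C_c^r(\bar D)$ is chosen to consist of those $\langle f\rangle$ whose Hessian is non-degenerate almost everywhere on $\bar D\cap\{\omega\cdot I=1\}$; for such $\langle f\rangle$ and each small $\eps>0$, $h_\eps$ then satisfies the Kolmogorov twist condition at typical points of its energy level $\{h_\eps=1\}$, albeit with twist of order $\eps$.

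The essential technical obstacle is that both the twist of $h_\eps$ and the remaining perturbation $\eps[f]$ are of the same order $\eps$, so the system is not naively nearly integrable. I would overcome this as in the proof of Theorem \ref{ThmTorus}: for each rational direction $p/|p|$ with $p\in\Z^N$ of large norm, locate the isolated point $I_p^\eps$ on the energy surface where $\nabla h_\eps(I_p^\eps)$ is parallel to $p$, then zoom in via a symplectic rescaling $I=I_p^\eps+\sqrt{\eps}\,J$ together with a matching time rescaling. In the new variables the dynamics near the resonant torus $\{I=I_p^\eps\}$ becomes genuinely nearly integrable with two well-separated scales, and an averaging/KAM normal form procedure conjugates it to a pendulum-like model governed by the averaged potential $\bar F_p(\theta')$ on the reduced torus $\T^{N-1}$ up to a small remainder. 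The interval $(a_n,b_n)$ is the window in $\eps$ where the $n$-th chosen resonance $p_n$ dominates and the normal form converges, and the asymptotically residual set $\cR_n$ selects those $[f]$ for which $\bar F_{p_n}$ is Morse with quantitative bounds at the rate $c_n$ of asymptotic residualness.

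The lower bound of $2^{N-1}$ periodic orbits is Morse-theoretic: any Morse function on $\T^{N-1}$ has at least $\sum_{k=0}^{N-1}\binom{N-1}{k}=2^{N-1}$ critical points, and each non-degenerate critical point of $\bar F_{p_n}$ persists as a periodic orbit of the full system $H_{\eps_n}$ by an implicit function theorem applied to the normal form. The resonance $p_n$ is chosen so that its associated linear periodic orbit on $\T^N$ passes through the prescribed $\mathrm{supp}(f)$, which guarantees that each $\gamma_{\eps_n,i}$ intersects $\mathrm{supp}(f)\cap\{H_{\eps_n}=1\}$. For the density statement under non-resonance of $\omega$, I would exploit that the linear flow $t\mapsto\theta+t\omega$ is equidistributed on $\T^N$: given any ball $B\subset\T^N$, sufficiently good rational directions $p/|p|$ close to $\omega/|\omega|$ yield linear periodic orbits on $\T^N$ passing arbitrarily close to every point of $B$, so choosing the $p_n$ along such a sequence makes the projected union $\bigcup_{i,n}\gamma_{\eps_n,i}$ accumulate everywhere.

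The hardest part is the simultaneous control of (a) convergence of the KAM normal form inside a shrinking resonant annulus where the twist is only $O(\eps)$, (b) the measure of the bad-$\eps$ set which must lie outside the chosen windows $(a_n,b_n)$, and (c) the Morse-type genericity of $[f]$ with quantitative constants compatible with the definition of asymptotic residualness. Coordinating these three quantitative schemes so that the conclusion holds \emph{for every} $\eps_n\in(a_n,b_n)$ and every $[f]\in\cR_n$ is the crux where the framework announced in the paper must do its main work.
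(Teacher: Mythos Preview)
Your overall architecture is right: provide twist via $\langle f\rangle$, pass to a resonant normal form near a rational frequency, and count periodic orbits by Morse theory on $\T^{N-1}$. But the step where you import the $\sqrt\eps$-blowup from the proof of Theorem~\ref{ThmTorus} is a genuine error, not a cosmetic one.

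In the torus setting the unperturbed Hamiltonian $h$ has $O(1)$ Hessian while the perturbation is $O(\eps)$; the rescaling $I\mapsto\sqrt\eps\,I$ is precisely what brings both to the common scale $\sqrt\eps$ so that Proposition~\ref{PropNF} applies. In the ellipsoid setting you already have $D^2h_\eps=\eps D^2\langle f\rangle$ of the \emph{same} order $\eps$ as the perturbation $\eps[f]$. If you nevertheless substitute $I=I_p^\eps+\sqrt\eps\,J$ and divide by $\sqrt\eps$, the quadratic twist becomes $\tfrac{\eps^{3/2}}{2}\langle D^2\langle f\rangle J,J\rangle$ while the angular perturbation becomes $\sqrt\eps\,[f]$: the twist is now $\eps$ times smaller than the perturbation, and the function playing the role of $g$ in Proposition~\ref{PropNF} has Hessian of order $\eps$, not $O(1)$. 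The normal form then fails to control the remainder.

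What the paper does instead is avoid any blowup and tie the rational approximation directly to $\eps$. One takes the sequence of best approximants $\bp_n/q_n$ of $\omega$ furnished by Dirichlet's theorem and sets $(a_n,b_n)=(a,b)\,q_n^{-1-1/N}$ for a fixed small interval $(a,b)$, so that $\eps_n$ is comparable to $|\omega-\bp_n/q_n|$. Then the Hamiltonian is rewritten as in \eqref{EqHamDecomp},
\[
H=\frac{\bp_n}{q_n}\cdot I+\eps_n\, g(I)+\eps_n Pf+\eps_n Qf,\qquad g(I)=\frac{1}{\eps_n}\Bigl(\omega-\frac{\bp_n}{q_n}\Bigr)\cdot I+\langle f\rangle(I),
\]
and by the choice of $\eps_n$ the linear part of $g$ is $O(1)$ while $D^2g=D^2\langle f\rangle$ is already $O(1)$ and nondegenerate at a fixed maximum $I_\star$ of $\langle f\rangle|_{\Delta^{N-1}}$. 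Proposition~\ref{PropNF} applies directly; the energetic reduction and Proposition~\ref{PropFinal} then produce the periodic orbits. Your description of the intervals (``the window where the $n$-th resonance dominates'') and of the choice of $p_n$ (``rational direction of large norm'') misses this explicit coupling $\eps_n\sim q_n^{-1-1/N}$, which is exactly what makes the degenerate problem fit the nondegenerate framework without rescaling. Your equation $\nabla h_\eps(I_p^\eps)\parallel p$ is, after unwinding, the same as $Dg=0$, but only once this coupling is in place.

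The $2^{N-1}$ count and the density on $\T^N$ are as you say; the orbits intersect $\mathrm{supp}f$ because their $I$-component stays near $I_\star\in\mathrm{supp}\langle f\rangle$, not because of any choice of $p_n$ in the $\theta$-direction.
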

Note that we do not require $f$ to be nonnegative or nonpositive as in the definition of strong closing property. 
To appreciate better the role played by the asymptotically residual condition, we recall the well-known example of Katok. 
 In \cite{Ka} (see also \cite{Zi}), Katok constructed a family of Finsler metrics $F_\eps$ on the sphere $\mathbb S^N$, which reduces to the standard metric when $\eps=0$. For irrational $\eps$, the metric admits only $N$ (for $N$ even) or $N+1$ (for $N$ odd) closed geodesics. In our theorem, if supp$f\cap\partial( \Delta^{N-1}\times \T^N)=\emptyset$, then the system $H_\eps$ admits $N$ trivial periodic orbits given by $$\{I_i=x_i^2+y_i^2=a_i/\pi, I_j=x_j=y_j=0,\ j\neq i\},\ \ i=1,2,\ldots,N,$$ and the periodic orbits given by the last theorem are nontrivial. Katok's example is excluded by our asymptotically residual condition, since it has no interval of parameters admitting nontrivial periodic orbits, but is still included in the strong closing lemma of \cite{CDPT,CS}.  
 
Furthermore, we can indeed get asymptotically denseness on the energy level, if we get rid of a zero measure set of $\omega. $ 
\begin{Thm}\label{ThmMain}
		There exists $r_0>0$ such that for all $r>r_0,$ the following holds. 
	There is a full measure set $\Omega\subset \R_+^N$ and a residual set $\cR\subset C_c^r(\bar D)$, such that for each $\omega\in \Omega$ and each $\langle f\rangle\in \cR$, there are a sequence  of intervals $(a_n,b_n)\subset (0,1)$ with $b_n\to 0$ and a sequence of   asymptotically residual sets  $\mathcal R_n\subset \mathcal S^r$, such that for each  $[f]\in \mathcal \cR_n$ and each $\eps_n\in (a_n,b_n)$, the Hamiltonian systems $H_{\eps_n}$ in \eqref{EqHamMain} admits at least $2^{N-1}$ distinct periodic orbits  $\gamma_{\eps_n,i},\ i=1,2,\ldots,2^{N-1},$ on energy level 1. Moreover, the closure of the union   $\cup_{n}\cup_i\gamma_{\eps_n,i}$ contains $D\times \T^N$. 
\end{Thm}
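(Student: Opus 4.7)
The plan is to iterate the construction of Theorem~\ref{Thm1} over a countable, dense family of resonances, so that the periodic orbits produced for different indices $n$ collectively sweep $D\times\T^N$. After one averaging step, the system~\eqref{EqHamMain} acquires the effective frequency
\[
\Omega_\eps(I)=\omega+\eps\,\nabla\langle f\rangle(I)+O(\eps^2),
\]
so the resonance surfaces
\[
\cR_{\bk,\eps}=\{I\in D:\bk\cdot\Omega_\eps(I)=0\}
\]
form a family indexed by primitive $\bk\in\Z^N\setminus\{0\}$ and small $\eps>0$. I choose the residual set $\cR\subset C_c^r(\bar D)$ so that $I\mapsto\nabla\langle f\rangle(I)$ is a local diffeomorphism on a full-measure subset of $D$; under this genericity and a Diophantine condition on $\omega$, the family $\{\cR_{\bk,\eps}\}$ sweeps a dense subset of $D$ as $(\bk,\eps)$ varies.

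The full-measure set $\Omega\subset\R_+^N$ is taken to consist of Diophantine frequencies, $|\bk\cdot\omega|\geq C_\omega|\bk|^{-\tau}$ for all $\bk\neq 0$, which is required to run the KAM normal form at each resonance. Fix a countable dense subset $\{(I^{(n)},\bk_n)\}_{n\geq 1}\subset D\times(\Z^N\setminus\{0\})$ with $|\bk_n|\to\infty$ and with $\bk_n/|\bk_n|$ dense in $S^{N-1}$. For each $n$, I solve the resonance equation
\[
\bk_n\cdot\omega+\eps_n^{\ast}\,\bk_n\cdot\nabla\langle f\rangle(I^{(n)})=0
\]
for a small positive $\eps_n^{\ast}\to 0$ (solvability follows from the Diophantine and nondegeneracy hypotheses, after passing to a subsequence if necessary), and let $(a_n,b_n)$ be a small interval around $\eps_n^{\ast}$ shrinking to zero.

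For $\eps\in(a_n,b_n)$ the resonance surface $\cR_{\bk_n,\eps}$ passes within $o(1)$ of $I^{(n)}$. Applying the normal form argument of Theorem~\ref{Thm1} in a neighborhood of $\cR_{\bk_n,\eps}$ reduces $H_\eps$ to a pendulum in the $\bk_n$-direction coupled to a transverse KAM system, and a Lusternik-Schnirelman/Morse argument on the $(N-1)$-torus of transverse angles produces at least $2^{N-1}$ periodic orbits $\gamma_{\eps,i}$ located within $O(\sqrt{\eps})$ of $\cR_{\bk_n,\eps}$ and projecting to circles in direction $\bk_n$ on $\T^N$. The asymptotically residual set $\cR_n$ is the one given by Theorem~\ref{Thm1} at the resonance $\bk_n$, with thickness $c_n\to 0$. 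Because the sequence $\{(I^{(n)},\bk_n)\}$ is dense in $D\times(\Z^N\setminus\{0\})$ and the orbits at the $n$-th step lie close to $I^{(n)}$ in action while tracing circles in the $\bk_n$-direction in angle, the closure of $\bigcup_n\bigcup_i\gamma_{\eps_n,i}$ contains $D\times\T^N$ for any selection $\eps_n\in(a_n,b_n)$.

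The main technical obstacle is the joint calibration of $(\bk_n,\eps_n^{\ast},c_n)$: the small-divisor bounds in the KAM normal form deteriorate with $|\bk_n|$, while the Diophantine estimate forces $\eps_n^{\ast}\gtrsim|\bk_n|^{-\tau-1}$, so the interval widths $|b_n-a_n|$ and the asymptotically residual thicknesses $c_n$ must be chosen together so that the normal form and the Morse condition on the pendulum potential remain valid uniformly across the interval. Assembling the first-category bad sets at each resonance into a single first-category set $\cN_n$ with a controlled $c_n$-neighborhood, and verifying that the orbits produced at the $n$-th step are genuinely close enough to $I^{(n)}$ and thick enough along the $\bk_n$-circles to yield density in $D\times\T^N$, are the central tasks, and they are what govern the required regularity threshold $r_0$.
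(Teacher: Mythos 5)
The proposal follows a genuinely different route from the paper and contains a gap that the paper's own mechanism is designed to fill.

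Your construction is built on a \emph{single-resonance} picture: you average to an effective frequency $\Omega_\eps(I)$, pick a resonance vector $\bk_n$, and reduce near the codimension-one surface $\cR_{\bk_n,\eps}$ to a pendulum in the $\bk_n$-direction coupled to a transverse system. The paper's proof instead uses a \emph{fully periodic} picture: it chooses a simultaneous rational approximation $\bp_n/q_n$ to $\omega$ (Dirichlet) and rewrites $H$ as in~\eqref{EqHamDecomp}, so that the unperturbed piece $\tfrac{\bp_n}{q_n}\cdot I$ is already periodic with period $q_n$ on every torus, and \emph{all} Fourier modes orthogonal to $\bp_n$ are resonant. These are not the same. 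In the single-resonance picture the transverse dynamics is still quasi-periodic, so a pendulum fixed point is generically not a periodic orbit of the full system when $N>2$; you would need additional commensurability of the $N-2$ transverse frequencies, which you do not address. In the fully periodic picture this problem does not arise, and the orbits produced are close to linear flows of slope $\bp_n/q_n$, hence $q_n^{-1}$-dense on $\T^N$. Your claim that the $n$-th orbits project to ``circles in direction $\bk_n$'' is therefore not what happens; a circle in a fixed direction is one-dimensional in $\T^N$, and a countable union of such circles would not give density in the angles without further control of their base points.

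The second and more fundamental gap is the localization in the $I$-variable. You observe, correctly, that by choosing $\eps_n^\ast$ appropriately the resonance surface $\cR_{\bk_n,\eps}$ passes near $I^{(n)}$. But this is a codimension-one surface, and the periodic orbits you produce (via the Morse argument on transverse angles) are selected by critical-point conditions that do not know anything about $I^{(n)}$; nothing forces them to lie near $I^{(n)}$ rather than elsewhere on the surface. The paper's mechanism for pinning the $I$-location is Lemma~\ref{LmCritical}, which is a vector equation: one must choose the best approximant $\bp_k/q_k$ and $\eps_k$ so that the full vector $\tfrac{1}{\eps_k}(\omega-\bp_k/q_k)$ converges to $-D\langle f\rangle(I_\star)$. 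The orbit then lives near the critical point of $g(I)=\tfrac{1}{\eps}(\omega-\tfrac{\bp}{q})\cdot I+\langle f\rangle(I)$, a codimension-$N$ condition. For a.e.~$\omega$ the direction $(\bp_k-q_k\omega)/\|\bp_k-q_k\omega\|$ equidistributes on $S^{N-1}$ by the Shapira--Weiss theorem (Theorem~\ref{ThmSW}), and it is precisely this equidistribution of \emph{displacement vectors} that makes the matching possible for a dense set of targets $I^{(n)}$. Your choice of $\Omega$ as the Diophantine frequencies is not the right full-measure set: a Diophantine lower bound on $|\bk\cdot\omega|$ controls the small divisors but supplies no information about the \emph{direction} of $\omega-\bp/q$ as $\bp/q$ ranges over best approximants, and hence no way to steer the critical point of $g$ toward $I^{(n)}$. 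This is exactly the ingredient the paper flags as ``lying in the heart of the proof,'' and it is absent from your argument.
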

Lying in the heart of the proof is a recent probabilistic number theoretic result on Diophantine approximation proved by Shapira-Weiss \cite{SW}, in addition to the sharp normal form developed in the proof of Theorem \ref{ThmMain}. 
The  conclusion part of the last theorem gives more information than the strong closing property, and indeed, the proof is constructive and effective. We know quite clearly the locations of each periodic orbit and its period information.

\subsection{Speculations}
Let us compare the following three approaches for the problem of closing lemma: 
\begin{enumerate}
	\item[(a)] closing nonwandering points, 
	\item[(b)] spectral approach, 
	\item[(c)] normal form approach. 
\end{enumerate}
Each has strengthes and weaknesses. The approach (a) gives dense periodic orbits but is only $C^2$ generic (for Hamiltonian). The approach (b) gives  $C^\infty$ closing lemma for Reeb flows, but only for 3D and surface Hamiltonian diffeomorphisms. The approach (c) works for any dimension and any high regularity, but requires asymptotic genericity and gives asymptotic denseness and only for nearly integrable systems. 

Our asymptotically residual set and asymptotic denseness look a bit artificial and unsatisfactory, but it seems to us that they are intrinsic to our approach. Indeed, as we will see in the proofs of the above theorems, a nonperturbative subsystem emerges after application of the normal form. The problem of finding periodic orbit in the original system is turned into a problem of finding nondegenerate critical points in the nonperturbative subsystem. Thus we have gone beyond the setting of nearly integrable systems considered by Poincar\'e.

\subsection{Further motivations from physics}


Hamiltonian systems \eqref{EqHam}  with $N=2$ were studied intensively in both mathematics and physics literature. It has important applications in physics such as Frenkel-Kontorova model in solid physics. The interests in the $N>2$ cases, in particular the system \eqref{EqHamMain} modeling an interacting chain of driven linear rotors, arise in the recent researches as the classical model for the quantum many-body localization (c.f. \cite{KGRG,RG} etc), which is a phenomenon of localization of eigenstates and breaking of ergodicity in interacting many-body system. The classical origin of the localization is attributed to the KAM behavior when $\omega$ satisfies Diophantine condition and when there is a lack of KAM tori, delocalization may occur. Our work reveals the richness of dynamics, i.e. the nonintegrability,  of the system \eqref{EqHamMain},  even when $\omega$ is Diophantine. Thus, models related to \eqref{EqHamMain} clearly deserve further investigations with interests beyond the existence of periodic orbits.  We expect that there exist lots of KAM tori around each elliptic periodic orbit with Floquet multipliers Diophantine and wiskered tori for each hyperbolic-elliptic type periodic orbit whose elliptic part has Diophantine Floquet multipliers, and they should also play a role in the localization. 


\subsection{Organization of the paper}
The paper is organized as follows.  We give an outline of the proof in Section \ref{SSOutline}. 
In Section \ref{SFrame}, we present the framework that we use to prove all the above theorems. 
In Section \ref{SEllipsoid}, we study the case of ellipsoid and give the proof of Theorem \ref{Thm1} and \ref{ThmMain}.
In Section \ref{STorus}, we study the torus case of ellipsoid and give the proof of Theorem \ref{ThmTorus} and \ref{ThmRm}.
In Section \ref{SNF}, we give the proof of the KAM normal form. 
Finally, we have two appendices. In Appendix \ref{AppAbraham}, we include Abraham's transversality theorem. In Appendix \ref{SRelation}, we show how our result is related to the original conjecture of Irie, by relating the Hamiltonian dynamics to Reeb dynamics.

\subsection{Outline of the proof}\label{SSOutline}
The proof consists of a hard part and a soft part. The hard part is a KAM normal form as well as its associated framework for studying similar problems, and the soft part is the genericity argument as well as some results on Diophantine approximation. We present the normal form package in Section \ref{SFrame}, which is expected to have more applications. On the other hand, the soft part is usually nonstandard and requires special observations for different problems. 

We first invoke the following classical Dirichlet's simultaneous Diophantine approximation theorem. 
\begin{Lm}[Dirichlet]
	For any $\omega\in \R^N$, and any $T (>1)\in \R$, there exist $q\in \N$ and $\mathbf p\in \Z^N$, such that $q\leq T$ and 
	$$|\omega-\mathbf p/q|\leq \frac{1}{|q|T^{1/N}}.$$
\end{Lm}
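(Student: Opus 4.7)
The plan is to invoke Dirichlet's classical pigeonhole proof on the $N$-torus $\T^N=[0,1)^N$. I do not anticipate any genuine obstacle, since the entire argument reduces to a one-page box-counting exercise; the only mildly delicate point is the calibration between the integer parameters that appear in pigeonhole and the real parameter $T$ in the statement.

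First, I would fix a positive integer $Q$ of size approximately $T^{1/N}$ (for instance $Q=\lfloor T^{1/N}\rfloor$, absorbing the resulting rounding into the standard formulation) and partition the half-open cube $[0,1)^N$ into $Q^N$ half-open axis-parallel subboxes, each of side length $1/Q$. Next I would consider the $Q^N+1$ points $\{j\omega\}\in[0,1)^N$ for $j=0,1,\ldots,Q^N$, where $\{\cdot\}$ denotes componentwise fractional part modulo $\Z^N$.

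Second, by the pigeonhole principle there exist indices $0\le j_1<j_2\le Q^N$ such that $\{j_1\omega\}$ and $\{j_2\omega\}$ lie in the same subbox. Setting $q=j_2-j_1$, we have $1\le q\le Q^N\le T$. Choosing $\mathbf{p}\in\Z^N$ so that the componentwise identity $q\omega-\mathbf{p}=\{j_2\omega\}-\{j_1\omega\}$ holds, and using that both fractional parts sit inside a single box of side $1/Q$, one obtains $\|q\omega-\mathbf{p}\|_\infty\le 1/Q$. Dividing through by $q$ yields $|\omega-\mathbf{p}/q|\le 1/(qT^{1/N})$ in the sup norm, which is the claimed estimate.

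The main (and only) subtlety is the joint calibration of $q\le T$ with approximation rate $T^{-1/N}$: both equalities hold on the nose when $T^{1/N}$ is an integer, and in the general case one makes the statement precise by replacing $T$ by $\lfloor T\rfloor$ or $\lceil T^{1/N}\rceil^N$ as appropriate. Since the lemma will be applied in this paper only qualitatively, to produce, for a given large $T$, some rational vector approximation of controlled denominator, this rounding is harmless.
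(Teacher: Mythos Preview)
Your argument is the standard pigeonhole proof and is correct, including your remark about the calibration between $q\le T$ and the rate $T^{-1/N}$; the paper itself does not supply a proof of this lemma but merely invokes it as the classical Dirichlet simultaneous approximation theorem. Since the lemma is used only to produce best approximants satisfying $|\omega-\mathbf p/q|\le 1/|q|^{1+1/N}$, your observation that the rounding is harmless is exactly right.
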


The lemma motivates the following definition. 
\begin{Def}
An integer vector $(\mathbf p,q)\in \Z^N\times \N$ is called a \emph{ best approximant} to $\omega$ if for any $q'<q$ and any $\mathbf p'\in \Z^N,$ we have $\|q\omega-\mathbf p\|<\|q'\omega-\mathbf p'\|,$ where $\|\cdot\|$ is the Euclidean norm. 
	
\end{Def}

When $\omega$ is proportional to an integer $\bp\in \Z^N$, we shall take $\omega=\bp/q$, with $q=(\|\bp\|/\|\omega\|)$. In this case, we can choose $\eps$ independent of $\bp/q$.  Otherwise, we can find a sequence of best approximants $\bp_n/q_n$ by Dirichlet theorem such that $|\omega-\mathbf p_n/q_n|\leq \frac{1}{|q_n|^{1+\frac{1}{N}}}$ and choose $\eps_n$ to be a number comparable to $\frac{1}{|q_n|^{1+\frac{1}{N}}}$ whose precise values would be determined later. In the latter case, we pick $\mathbf p_n/q_n$ and $\eps_n$ and suppress the subscript $n$ for simplicity. 

With these, we shall rewrite the Hamiltonian as
\begin{equation}\label{EqHamDecomp}
	H(I,\theta)=\frac{\mathbf p}{q}\cdot I+\eps g(I)+\eps P f(I,\theta)+\eps Q f(I,\theta),
	\end{equation}
where $g(I)=\frac{1}{\eps}(\omega-\frac{\mathbf p}{q})\cdot I+\langle f\rangle (I)$, $\langle f\rangle (I)=\int_{\T^N}f(I,\theta)d\theta$, and $P f$ (respectively $Q f$) consists of all the Fourier modes with $\bk\cdot\bp=0$ (respectively $\bk\cdot \bp\neq 0$), $\bk\in \Z^N\setminus\{0\}$.  We have the freedom to choose an approximating rational vector $\bp/q$ and $\eps$. 

The next step is to obtain a KAM normal form that is to find a sympletic transformation to write the Hamiltonian into the form 
$$H(I,\theta)=\frac{\mathbf p}{q}\cdot I+\eps G(I)+\eps\bar F(I,\theta)+\eps \tilde F(I,\theta),$$
where $G(I)$ is a small perturbation of $g$, the term $\bar F$ is a small perturbation of $Pf$ still consists of Fourier modes with $\bk\cdot \bp=0,\bk\neq 0$, and $\eps F$ is negligible compared with other terms. See Proposition \ref{PropNF}. 

Note that the Hamiltonian $\frac{\mathbf p}{q}\cdot I$ determines a linear flow on $\T^N\times \{I\}$ with velocity $\frac{\mathbf p}{q}$, thus each orbit is periodic. If we take the terms $\eps G(I)+\eps\bar F(I,\theta)$ into consideration, restrict to an energy level and pick a Poincar\'e section, a critical point of $\eps G(I)+\eps\bar F(I,\theta)$ on which would give a periodic orbit of the system $\frac{\mathbf p}{q}\cdot I+\eps G(I)+\eps\bar F(I,\theta)$ that is also a linear flow with velocity $\frac{\mathbf p}{q}$. If we further assume that the critical point is nondegenerate and the eigenvalues of the Hessian are bounded away from zero by a number much larger than $\eps\|\tilde F\|_{C^2}$, then by implicit function theorem adding the perturbation $\eps \tilde F(I,\theta)$ does not spoil the periodic orbit. This is why we have the asymptotically residual condition. 

We remark that since we turn the problem of finding periodic orbit into a problem of finding nondegenerate critical points, we expect that this is a place where the normal form theory can meet the Floer theory to yield potentially more results. 


We have outlined why we have Theorem \ref{Thm1}. The periodic orbit obtained in this way is a small perturbation of the linear flow with velocity $\bp/q$, thus in the limit $\eps\to0$, would be dense when projected to $\T^N$ if $\omega$ is non resonant. However, the $I$-component is almost a constant along the orbit. To find dense periodic orbits also in the $I$-component, we note that the $I$-component of the orbit described above is very close to the critical point of $g(I)=\frac{1}{\eps}(\omega-\frac{\mathbf p}{q})\cdot I+\langle f\rangle (I)$, i.e. solution of the equation $\frac{1}{\eps}(\omega-\frac{\mathbf p}{q})+D\langle f\rangle (I)=0$. To realize a dense set of $I$ as solutions to the last equation, we invoke the following recent result of Shapira-Weiss. 

Let $\mathrm{disp}(\omega,\mathbf p,q):=q^{1/N}(\mathbf p-q\omega)\in \R^N$ be the \emph{displacement}. 

\begin{Thm}[Theorem 1.1 of \cite{SW}]\label{ThmSW}

There exists a measure $\mu$ on $\R^N$ such that for Lebesgue almost every $\omega\in \R^N$, the sequence $\mathrm{disp}(\omega,\mathbf p_k,q_k)$ equidistributes on $\R^N$ with respect to $\mu$ as $k\to\infty$, where $(\mathbf p_k,q_k)$ is the sequence of best approximants. Moreover, the measure $\mu$ has bounded support, absolutely continuous with respect to Lebesgue and $\mathrm{SO}(N)$-invariant. 
\end{Thm}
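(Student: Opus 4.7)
The plan is to translate the problem into the dynamics of the diagonal flow on the space of unimodular lattices, which is the standard dictionary for simultaneous Diophantine approximation. To $\omega\in \R^N$ I associate the lattice
\[
\Lambda_\omega = \begin{pmatrix} I_N & -\omega \\ 0 & 1 \end{pmatrix}\Z^{N+1}\subset \R^{N+1},
\]
so that the lattice vectors of the form $(\bp-q\omega,q)$ with $q\ge 1$ encode exactly the rational approximants of $\omega$. Under this dictionary a best approximant $(\bp_k,q_k)$ corresponds precisely to an index where the vector $(\bp_k-q_k\omega,q_k)\in \Lambda_\omega$ realizes a new minimum of the Euclidean norm on the first $N$ coordinates among lattice vectors with positive last coordinate at most $q_k$.

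Next I would introduce the one-parameter diagonal subgroup
\[
g_t=\mathrm{diag}(e^{t/N},\ldots,e^{t/N},e^{-t})
\]
acting on the homogeneous space $X=\mathrm{SL}(N+1,\R)/\mathrm{SL}(N+1,\Z)$. Applying $g_{\log q_k}$ sends $(\bp_k-q_k\omega,q_k)$ to $(q_k^{1/N}(\bp_k-q_k\omega),1)=(\mathrm{disp}(\omega,\bp_k,q_k),1)$, so the displacement is exactly the first $N$ coordinates of the short vector of $g_{\log q_k}\Lambda_\omega$ whose last coordinate equals $1$. The sequence $\mathrm{disp}(\omega,\bp_k,q_k)$ therefore becomes the evaluation of a geometric functional on $X$ along a discrete subsequence of the orbit $\{g_t\Lambda_\omega\}$ cut out by a record-update condition.

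The equidistribution theorem then follows from two ingredients. First, for Lebesgue a.e.\ $\omega$ the orbit $\{g_t\Lambda_\omega\}_{t\ge 0}$ equidistributes in $X$ with respect to Haar measure; this is Birkhoff's theorem together with the mixing of $g_t$ coming from Howe--Moore. Second, one needs a Birkhoff-type statement adapted to the return times $\log q_k$: construct a cross-section $\Sigma\subset X$ consisting of those lattices at which the best-approximant condition is updated, verify that $\Sigma$ has finite induced Haar measure (Kac's lemma), and observe that $\bp_k/q_k$ is a best approximant iff $g_{\log q_k}\Lambda_\omega\in \Sigma$. Then $g_{\log q_k}\Lambda_\omega$ equidistributes in $\Sigma$ with respect to the induced invariant measure, and $\mu$ is defined as its pushforward under the ``shortest vector with last coordinate $1$'' map.

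The main obstacle I anticipate is the careful construction and measure-theoretic analysis of the cross-section $\Sigma$: the best-approximant condition is fragile, and tied minima must be ruled out, which is precisely why the statement is only claimed for almost every $\omega$. Once $\mu$ is defined, its advertised properties should be more routine: boundedness of the support follows from Dirichlet's bound $\|q_k\omega-\bp_k\|\le q_k^{-1/N}$, absolute continuity from translation invariance of Haar measure along the unipotent directions transverse to $g_t$, and $\mathrm{SO}(N)$-invariance from the fact that the embedded block $\mathrm{SO}(N)\subset \mathrm{SL}(N+1,\R)$ commutes with $g_t$ and preserves both $\Sigma$ and the short-vector functional.
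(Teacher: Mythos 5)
This is not a statement the paper proves: it is imported verbatim as Theorem~1.1 of Shapira--Weiss \cite{SW}, and the paper uses it as a black box (via Lemma~\ref{LmCritical}). There is therefore no ``paper's own proof'' to compare against.

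That said, your sketch does capture, at a high level, the actual strategy of \cite{SW}: the reduction to the diagonal flow $g_t$ on $\mathrm{SL}(N+1,\R)/\mathrm{SL}(N+1,\Z)$ via the lattices $\Lambda_\omega$, the observation that $g_{\log q_k}$ renormalizes the $k$-th approximant vector to have last coordinate $1$ and first $N$ coordinates equal to the displacement, and the idea of building a cross-section $\Sigma$ on which the best-approximant condition is realized so that equidistribution along $\{g_{\log q_k}\Lambda_\omega\}$ becomes equidistribution on $\Sigma$ with respect to the induced measure. The $\mathrm{SO}(N)$-invariance argument (the $\mathrm{SO}(N)$ block commutes with $g_t$ and preserves the cross-section and the short-vector functional) is also the right reason for that property.

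Where your proposal is genuinely incomplete, and where the real work of \cite{SW} lies, is in three places you wave at rather than address. First, equidistribution of $\{g_t\Lambda_\omega\}$ for a.e.\ $\omega$ in Haar measure does not by itself give equidistribution of the \emph{discrete} sequence $g_{\log q_k}\Lambda_\omega$: one must set up a genuine cross-section/return-time framework and prove that the return map to $\Sigma$ is measure-preserving and ergodic for the induced measure, which requires identifying $\Sigma$ as a boundary of a region with finite Haar measure and controlling its regularity (this is where the ``tied minima'' degeneracies you mention must be handled, not just noted). Second, the absolute continuity of $\mu$ is not a formal consequence of translation invariance along unipotents transverse to $g_t$; it requires analyzing the pushforward of the induced cross-section measure under the short-vector map, which is a nontrivial statement about the geometry of $\Sigma$. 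Third, the claim that Birkhoff plus Howe--Moore gives the needed a.e.\ equidistribution of the full orbit is true but needs the standard disintegration argument linking Lebesgue measure on $\omega$-space to the conditional measures on expanding horospherical leaves; as stated it is a heuristic, not a proof. None of these gaps undermine the plausibility of your outline, but they are precisely the content of the cited paper, and the present paper deliberately does not reprove them.
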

By the $\mathrm{SO}(N)$-invariance, we see that the sequence $\frac{\mathrm{disp}(\omega,\mathbf p_k,q_k)}{\|\mathrm{disp}(\omega,\mathbf p_k,q_k)\|}$ equidistributes on $\mathbb S^{N-1}$ with respect to Haar, and in particular, it is dense.  

With this, we immediately have the following Lemma. 


\begin{Lm}\label{LmCritical}
For any $I_*$ where $D\langle f\rangle(I_*)$ is nonzero, there exist a subsequence $n_k\to\infty$ and a sequence $\eps_k\to 0$ such that $\frac{1}{\eps_k}(\omega-\frac{\mathbf p_{n_k}}{q_{n_k}})\to -D\langle f\rangle(I_*)$ as $k\to\infty$. 
\end{Lm}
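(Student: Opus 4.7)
The plan is to leverage Theorem \ref{ThmSW} to prescribe the direction of $\omega-\mathbf p_{n_k}/q_{n_k}$, and then to calibrate $\eps_k$ so as to match the magnitude of $-D\langle f\rangle(I_*)$. Writing $v:=-D\langle f\rangle(I_*)$ (nonzero by hypothesis) and $\hat v:=v/\|v\|\in\mathbb S^{N-1}$, I would first observe that
\[
\omega-\frac{\mathbf p_n}{q_n}=-\,q_n^{-(1+1/N)}\,\mathrm{disp}(\omega,\mathbf p_n,q_n),
\]
so the unit vector of $\omega-\mathbf p_n/q_n$ is the opposite of that of $\mathrm{disp}(\omega,\mathbf p_n,q_n)$. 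By the density statement following Theorem \ref{ThmSW} (which uses that the equidistribution measure $\mu$ is absolutely continuous and $\mathrm{SO}(N)$-invariant, so that its push-forward under radial projection is Haar measure on $\mathbb S^{N-1}$), the sequence of normalized displacements is dense in $\mathbb S^{N-1}$. Hence I may extract a subsequence $n_k\to\infty$ with
\[
\frac{\omega-\mathbf p_{n_k}/q_{n_k}}{\|\omega-\mathbf p_{n_k}/q_{n_k}\|}\longrightarrow \hat v.
\]

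Next, I would define $\eps_k:=\|\omega-\mathbf p_{n_k}/q_{n_k}\|/\|v\|>0$. The Dirichlet bound $\|\omega-\mathbf p_{n_k}/q_{n_k}\|\leq q_{n_k}^{-(1+1/N)}\to 0$ (recalled just before the statement of the lemma) forces $\eps_k\to 0+$, and by construction
\[
\frac{1}{\eps_k}\Big(\omega-\frac{\mathbf p_{n_k}}{q_{n_k}}\Big)=\|v\|\cdot\frac{\omega-\mathbf p_{n_k}/q_{n_k}}{\|\omega-\mathbf p_{n_k}/q_{n_k}\|}\longrightarrow \|v\|\hat v=-D\langle f\rangle(I_*),
\]
as required. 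There is no genuine obstacle here; the entire content of the lemma is packaged inside Theorem \ref{ThmSW}. The only implicit caveat is that $\omega$ must be drawn from the full-measure set on which that theorem applies, which is precisely the set $\Omega$ appearing in Theorem \ref{ThmMain}.
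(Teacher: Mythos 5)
Your proof is correct and is essentially the argument the paper has in mind: the paper asserts the lemma follows "immediately" from the density of normalized displacements on $\mathbb{S}^{N-1}$, and you have supplied the natural details (extract a subsequence matching the direction of $-D\langle f\rangle(I_*)$, then calibrate $\eps_k$ by the ratio of magnitudes). Your parenthetical caveat that $\omega$ must lie in the full-measure set from Theorem \ref{ThmSW} is also accurate and is implicit in the paper's statement.
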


We also need the nondegeneracy of $D^2g(I_*)=D^2\langle f\rangle(I_*)$, with which, we can find by the implicit function theorem a sequence of $I_k\to I_*$ with $\frac{1}{\eps_k}(\omega-\frac{\mathbf p_{n_k}}{q_{n_k}})+D\langle f\rangle(I_k)=0.$ We have the following result. 

\begin{Prop}\label{PropGeneric}
	For $C^r,\ r\ge 3, $ generic function $g:\ B_1\to \R$, the set $\{\det D^2g(z)=0\}$ is a finite union of submanifolds of codimension 1, where $B_1$ is the unit ball in $\R^N$. 
\end{Prop}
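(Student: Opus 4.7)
The plan is to apply Abraham's transversality theorem (Appendix \ref{AppAbraham}) to the Hessian evaluation map
\[
F:C^r(\bar B_1)\times B_1\longrightarrow\mathrm{Sym}(N),\qquad F(g,z):=D^2g(z),
\]
where $\mathrm{Sym}(N)$ denotes the space of real symmetric $N\times N$ matrices. First I would check that $F$ is a submersion at every point: given $(g_0,z_0)$ and a target $S\in\mathrm{Sym}(N)$, the variation $h(z)=\tfrac12\sum_{i,j}S_{ij}(z_i-z_{0,i})(z_j-z_{0,j})\chi(z)$ with $\chi$ a smooth cutoff equal to $1$ near $z_0$ satisfies $D^2h(z_0)=S$, so the partial derivative $D_gF|_{(g_0,z_0)}$ already surjects onto $\mathrm{Sym}(N)$. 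In particular $F$ is transverse to any submanifold of $\mathrm{Sym}(N)$.

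Next I would stratify the singular variety $\{\det=0\}\subset\mathrm{Sym}(N)$ by rank. For $k=1,\dots,N$, the locus
\[
\Sigma_k:=\{A\in\mathrm{Sym}(N):\mathrm{rank}(A)=N-k\}
\]
is a smooth embedded submanifold of codimension $k(k+1)/2$ in $\mathrm{Sym}(N)$, and $\{\det=0\}=\bigsqcup_{k\ge 1}\Sigma_k$. Applying Abraham's theorem to each stratum yields a residual set $\cR_k\subset C^r(\bar B_1)$ on which $z\mapsto D^2g(z)$ is transverse to $\Sigma_k$. Since the preimage is forced to be empty as soon as $k(k+1)/2>N$, the intersection $\cR:=\bigcap_{k:\,k(k+1)/2\le N}\cR_k$ is a finite intersection of residual sets, hence residual.

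For $g\in\cR$, stratified transversality gives $(D^2g)^{-1}(\Sigma_k)$ as a smooth submanifold of $B_1$ of codimension $k(k+1)/2$. The stratum $k=1$ (rank exactly $N-1$) produces the open codimension-$1$ piece of $\{\det D^2g=0\}$, while the strata $k\ge 2$ produce submanifolds of codimension $\ge 3$ lying in the closure of the top one. Compactness of $\bar B_1$ together with local finiteness of the transverse intersection forces only finitely many connected components, realising $\{\det D^2g=0\}$ as a finite union of smooth submanifolds whose top-dimensional pieces are codimension $1$.

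The main subtlety is that $\{\det=0\}\subset\mathrm{Sym}(N)$ is singular along the rank-$\le N-2$ locus, so one cannot simply ask $0$ to be a regular value of the scalar map $z\mapsto\det D^2g(z)$: the gradient of $\det$ vanishes identically on rank-$\le N-2$ matrices. The rank stratification, together with Abraham's theorem applied stratum by stratum, is the device that circumvents this, and the dimensional inequality $k(k+1)/2\le N$ is what keeps the union finite.
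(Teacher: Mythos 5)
Your proposal follows the same overall route as the paper -- apply Abraham's transversality theorem to the Hessian evaluation map -- but handles the key geometric point more carefully. The paper asserts as a ``Claim'' that the singular locus $V\subset\mathrm{Sym}(N)$ is a finite union of codimension-$1$ submanifolds, and the justification offered (diagonalize, observe that singular diagonal matrices form $N$ hyperplanes, count dimensions) glosses over the fact that the spectral decomposition $L=Q^{-1}\Lambda Q$ is not a global parametrization and that $V$ is genuinely singular along the rank-$\le N-2$ locus. You replace that by the rank stratification $\Sigma_k$ with $\mathrm{codim}\,\Sigma_k=k(k+1)/2$, apply Abraham stratum by stratum, and observe that only $\Sigma_1$ contributes a codimension-$1$ piece while deeper strata either pull back to codimension $\ge 3$ or (once $k(k+1)/2>N$) to the empty set. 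This is a more honest reading of ``finite union of submanifolds of codimension $1$'': the top stratum is codimension $1$, lower-dimensional strata lie in its closure, and the Morse--Sard-type conclusion that $\{\det D^2g=0\}$ has measure zero -- which is all that is actually used downstream in the proof of Theorem~\ref{ThmMain} -- holds. Your surjectivity check is also a slight strengthening of the paper's: where the paper constructs a single local perturbation $\tfrac{\eps}{2}z_n^2$ to push a degenerate diagonal Hessian off the zero locus, you produce a local quadratic realizing an arbitrary target $S\in\mathrm{Sym}(N)$, so $D_gF$ is already onto and transversality to each stratum follows for free. Both are adequate; yours is cleaner and dimension-independent. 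In short, the proposal is correct, takes the same high-level strategy as the paper, but patches the paper's loose treatment of the singular variety $V$ by an explicit rank stratification -- which is genuinely the missing content in the paper's ``Claim.''
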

Putting all these ingredients together, we get the proof of Theorem \ref{ThmMain}. 

Theorem \ref{ThmRm} and \ref{ThmTorus} turn out to be similar and much simpler since the unperturbed part of the Hamiltonian can provide the needed nondegeneracy.  
\section*{Acknowledgement}
The author would like to thank  Wenmin Gong for calling his attention the paper \cite{CDPT}, Yitwah Cheung for pointing out the paper \cite{SW}, Professor Yiming Long for suggesting the reference \cite{Ka},  and Kei Irie, Shira Tanny, Jianlu Zhang, Zhengyi Zhou from illuminating discussions. The author is supported by grant NSFC (Significant project No.11790273) in China and the Xiaomi endowed professorship of Tsinghua University. 




\section{The general framework }\label{SFrame}

In this section, we present a general framework that can be used to study similar problems. 
\begin{enumerate}
	\item \emph{KAM normal form}: The first key ingredient in the framework  is the KAM normal form, which shows that after a symplectic transformation, the Hamiltonian depends on dynamics in the $\bp$ direction very weakly. 
	\item  \emph{Linear symplectic transformation}: The second step is to separate the $\bp$ direction and the directions transverse to $\bp$, for which purpose, we perform a linear symplectic transformation. 
	\item  \emph{Energetic reduction and locating a periodic orbit}: The third step is to locate a critical point in the transverse direction, that would be a periodic orbit in the truncated system. For this step, it would be helpful to perform an energetic reduction by restricting to an energy level and reduce the system to a non autonomous one.
	\item  \emph{Lyapunov center theorem}:  The last step is to introduce the perturbation by  applying Lyapunov center theorem. For the last step, we shall use the genericity assumption. 
\end{enumerate}

\subsection{The KAM normal form}
We next perform a decomposition $f=\langle f\rangle+P f+Q f$, where supposing $f(I,\theta)=\sum_{\bk\in \Z^N} f_\bk(I)e^{i\bk\cdot \theta}$, then we have $P$ the projection to Fourier modes with $\bk\cdot\bp=0,\ \bk\neq 0$ and $Qf=f-Pf-\langle f\rangle$, i.e. 
\begin{equation}\label{EqbarF}
	\langle f\rangle=\int_{\T^N}f(I,\theta)d\theta=f_{\mathbf 0},\quad P f(I, \theta)=\sum_{\bk\cdot \mathbf p=0,\bk\neq 0} f_\bk(I)e^{i\bk\cdot \theta},\quad Q f(I, \theta)=\sum_{\bk\cdot \mathbf p\neq0} f_\bk(I)e^{ i\bk\cdot \theta}. 
\end{equation}
With Fourier expansion, we define the $C^r$-norm as $\|f\|_{C^r}=\sum_{\bk} (|\bk|^r+1)|f_\bk|_{C^r}$ where $|f_\bk|_{C^r}$ is the usual $C^r$-norm of $f_\bk$  as a function of  $I$. 

For later use, we define 
\begin{equation}\label{Eqk}
\bk_i=(p_i,0,\ldots,0,-p_1,0,\ldots,0)/(g.c.d.(p_1,p_i)),	
\end{equation}
 where $-p_1$ appears in the $i$-th entry, $i=2,3,\ldots,N.$ Then for each $\bk$ with $\bk\cdot\bp=0$, we have $\bk\in \mathrm{span}\{\bk_2,\ldots,\bk_N\}$. 



\begin{Prop}[The KAM Normal form]\label{PropNF} 
	Let $H$ be the Hamiltonian \eqref{EqHamDecomp}. Then there exists $r_0$ sufficiently large such that for all $r>r_0$ and   $\|f\|_{C^r}\leq 1$, the following holds. Let $I_\star$ be a point where $D g(I_\star)$ is vanishing and $D^2 g(I_\star)$ is nondegenerate, then there exists $\eps_0$ such that for all $0<\eps<\eps_0$, there exist $I_*$ satisfying $|I_*-I_\star|\leq C\eps q$ and  a symplectic transformation $\phi$ defined on $\ B_{\gamma^{1/3}}(I_*)\times \T^N$ and being  $\eps q$-close to identity in the $C^{2}$ norm,  such that we have
	\begin{equation}\label{EqNF}
	H\circ\phi(I,\theta)=\frac{\bp}{q}\cdot I+\eps G(I)+\eps \bar F(I,\theta)+\eps \tilde F(I,\theta), 		
	\end{equation}
	where  we have 
	\begin{enumerate}
		\item $G(I)=g(I)+o(\eps q)$ in $C^2$ satisfying $DG(I_*)=0$, 
		\item $\bar F=Pf+O(\eps^{\frac{1}{2N}}\gamma) $ in $C^2$ and
		\item  $\|\tilde F\|_{C^{2}}\leq \gamma^{1+\beta}$,
	\end{enumerate}
	 where we define $\gamma:= \eps^{(r-2)\frac{N}{N+1}}$ and $\beta>0$ is a constant depending only on $N$.
\end{Prop}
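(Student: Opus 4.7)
The plan is a finite-step KAM iteration that exploits the rational unperturbed frequency $\bp/q$. For any $\bk\in\Z^N\setminus\{0\}$ with $\bk\cdot\bp\neq 0$ one has $|\bk\cdot\bp|\geq 1$, so the homological equation $\{\tfrac{\bp}{q}\cdot I,\,S_{\bk}(I)e^{i\bk\cdot\theta}\}=-R_{\bk}(I)e^{i\bk\cdot\theta}$ is solved by $S_{\bk}=iqR_{\bk}/(\bk\cdot\bp)$; the only loss from the small divisor is the factor $q$. Because $\eps$ is chosen comparable to $q^{-(N+1)/N}$, the effective small parameter per step is $\eps q\sim q^{-1/N}\ll 1$, and this is what drives the iteration, with no Diophantine exponent entering.

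Starting from $H_0=H$, I will construct $\phi=\exp(X_{S_1})\circ\cdots\circ\exp(X_{S_n})$, where $S_j$ is built from the $Q$-part of the current Hamiltonian truncated to Fourier modes $|\bk|\leq K_j$ via the homological equation above. After each step the new remainder has three pieces: the high Fourier tail of size $\lesssim \eps K_j^{-(r-2)}$ in $C^2$ (from the $C^r$ hypothesis and the weighted Fourier norm in \eqref{EqbarF}); the first nontrivial Poisson bracket $\eps\{g+Pf+Qf,S_j\}+\tfrac12\{\{\tfrac{\bp}{q}\cdot I,S_j\},S_j\}$, whose $Q$-part has size $\lesssim \eps q$ times the preceding $Q$-size; and higher nested brackets that are genuinely of lower order. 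Along the way the mean $\langle\{Pf+Qf,S_j\}\rangle$ is absorbed into $G$ (note that $\langle\{g,S_j\}\rangle=0$ because $g$ is $\theta$-independent and $S_j$ has zero $\theta$-average, so the correction to $g$ is genuinely second-order, giving $o(\eps q)$), while the $P$-part of $\{Pf+Qf,S_j\}$ is absorbed into $\bar F$. Keeping careful track of these reabsorptions gives $G=g+o(\eps q)$ and $\bar F=Pf+O(\eps^{1/(2N)}\gamma)$ in $C^2$, matching (1)--(2).

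The critical point $I_*$ is then produced by the implicit function theorem: since $D^2g(I_\star)$ is nondegenerate and $DG-Dg=o(\eps q)$ in $C^1$, there is a unique zero $I_*$ of $DG$ near $I_\star$ with $|I_*-I_\star|\lesssim \eps q$. The neighborhood $B_{\gamma^{1/3}}(I_*)\times\T^N$ is chosen large enough that the normal form is useful for the applications in later sections, yet small enough that the generating-function estimates and Taylor remainders of the exponentials $\exp(X_{S_j})$ close up on it.

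The main obstacle will be balancing finite regularity against the per-step loss $\eps q$. Each KAM step trades a factor $q$ (small divisor) and polynomial-in-$K_j$ factors (from differentiating $S_j$) against the truncation gain $K_j^{-(r-2)}$. Choosing $K_j\sim \eps^{-\alpha_j}$ with $\alpha_j$ suitably decreasing and stopping at the step where the truncation error overtakes the iterative improvement $(\eps q)^j$ yields $\|\tilde F\|_{C^2}\lesssim \gamma^{1+\beta}$ with some $\beta>0$ depending only on $N$; this $\beta$ quantifies the residual gap between the competing scales at the optimal stopping time. The exponent $(r-2)N/(N+1)$ in $\gamma$ is precisely this optimum between the per-step loss $\eps q\sim \eps^{1/(N+1)}$ and the single-truncation gain from $C^r$ regularity; delicate interpolation between $C^2$ and $C^r$ norms will be needed to keep the regularity from collapsing after a bounded number of iterations, and is likely the most technically demanding part of the argument.
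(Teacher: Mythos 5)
Your structural observations are sound: the small divisor for $\bk\cdot\bp\neq 0$ is bounded below by $1/q$, the effective small parameter is $\eps q\sim q^{-1/N}$, and the zero average $\langle\{g,S_j\}\rangle=0$ ensures $g$ is only corrected at second order. But there is a decisive gap in the choice of homological equation that makes the whole scheme collapse.

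You solve $\{\tfrac{\bp}{q}\cdot I,\,S_{j}\}=-Q_j$, so at each step the linear-in-$S_j$ term $\eps\{g,S_j\}$ remains as a new remainder. It has no extra smallness beyond $\eps q$ times the preceding $Q$-size (as you yourself note), so your scheme is \emph{linear}: after $j$ steps the remainder is $\sim(\eps q)^j$. To reach $\gamma^{1+\beta}\sim q^{-(r-2)(1+\beta)}$ with $\eps q\sim q^{-1/N}$ you would need $j\gtrsim N(r-2)$ steps. Each Poisson bracket costs at least one derivative, and the $C^r$ regularity cannot survive $O(r)$ steps; no interpolation or choice of $K_j$ rescues this, because the derivative loss is proportional to the number of steps, which is itself of order $r$. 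Your phrase ``bounded number of iterations'' is inconsistent with the geometric decay rate you describe.

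The paper's fix is to include the slow part in the unperturbed operator: the homological equation is
\[
\eps Q_<f+\eps\Big\{\tfrac{\bp}{q}\cdot I+\eps g(I),\,W\Big\}=0,
\]
so the divisor becomes $\tfrac{1}{q}\bk\cdot\bp+\eps\,\bk\cdot\nabla g(I)$. This is where the hypothesis $Dg(I_\star)=0$ is essential: near the critical point $I_\star$ one has $|\nabla g(I)|\lesssim\|D^2g\|\,\rho$, and the paper arranges the shrinking radii $\rho_n$ and the Fourier cutoffs $K_n$ so that $\eps\rho_n K_n\|D^2g\|\leq\tfrac{1}{3q}$, giving $|\tfrac{1}{q}\bk\cdot\bp+\eps\,\bk\cdot\nabla g|\geq\tfrac{1}{2q}$ for $|\bk|\leq K_n$. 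With $g$ absorbed into the homological equation, the dominant new remainder is the genuinely quadratic bracket $\eps^2\{Q_<f,W\}$, and one obtains a Newton iteration $\eps_n=\eps_{n-1}^{2-\al}q$ with superexponential decay, terminating after only $O(\log r)$ steps. This is what keeps the derivative loss $O((N+1)\log r)\ll r$ affordable. You should rebuild your scheme around this modified homological equation; without it, (3) cannot be reached, and in fact you never actually use the hypothesis $Dg(I_\star)=0$, which is a warning sign.

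A second, smaller issue: your claim that $\bar F=Pf+O(\eps^{1/(2N)}\gamma)$ follows by ``keeping careful track of reabsorptions'' is not justified. Since $\|Pf\|_{C^2}\sim\gamma$ already, you must show that the new contributions to the $P$-projection are smaller by a factor $\eps^{1/(2N)}$. The paper gets this because $P\{Q_<f,W\}$ picks out Fourier modes $\bk+\bk'$ with $(\bk+\bk')\cdot\bp=0$, which forces $|\bk+\bk'|\gtrsim q$, so one again gains $q^{-(r-r')}$ by measuring in a lower $C^{r'}$ norm. This needs to be made explicit, not asserted.
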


We postpone the proof of the normal form to Section \ref{SNF}.
\begin{Rk}
	The normal form looks similar to  the normal form used by Lochak in his proof of Nekhoroshev theorem $($c.f. \cite{L}$)$. However, it is much more delicate and essentially different. Indeed, its proof involves the KAM scheme with superexponential decay of the remainder.  The main reason is that $PF$ and $Pf$ consist of Fourier modes with $\bk\cdot \bp=0$ and $|\bp|\sim q$, and such a resonance only occurs first when $|\bk|$ is comparable with $q$. Then by the decay of Fourier modes, we have $\|Pf\|_{C^2}\leq \frac{C}{q^{r-2}}\|Pf\|_{C^r}\sim\gamma$, which is much smaller than $\|Qf\|_{C^2}$ which is of order 1.  Thus in the proof, we not only need to make the remainder $\tilde F$ small, but also control carefully such that $G$ and $\bar F$ are dominated by $g$ and $Pf$ respectively, rather than get spoiled by $Qf\circ\phi$. 
\end{Rk}
\subsection{Linear symplectic transformation}
To reveal the information of periodic orbit clearly, we perform the following linear symplectic transformation. The point of the transformation is to separate the direction of $\mathbf p$ and that transverse to $\mathbf p$. 

Let $M$ be the matrix $\left[\begin{array}{ccc}
	\ \frac{1}{q}\mathbf p&\ \\
	0_{(N-1)\times 1}&\mathrm{Id}_{N-1}
\end{array}
\right]$ where $\frac{1}{q}\mathbf p$ is the first row, then we have $M^{-T}=\left[\begin{array}{ccc}
	\frac{q}{p_1}&0 \\
	-\frac{1}{p_1}\bar{\mathbf p}^T&\mathrm{Id}_{N-1}
\end{array}
\right]$ where $\bar{\mathbf p}=(p_2,\ldots, p_N)\in \Z^{N-1}$. Note in particular that the $i$-th row is a multiple of $\bk_i,\ i\ge 2. $ The norms of the matrices are both order 1 as $\eps\to 0. $ In the limit $\bp/q\to\omega$, we have $M\to M_\infty$ where the first row in $M$ is replaced by $\omega$ in $M_\infty. $

With this, we introduce a non standard torus $\T_{M}^N:=\R^N/(M^{-T}\Z^N)$ and a symplectic transformation 
$$\Phi_M:\ T^*\T^N\to T^*\T_{M}^N,\quad (I,\theta)\mapsto (MI,M^{-T}\theta):=(K,\vartheta). $$
Both $\Phi_M$ and $\Phi^{-1}_M$ have uniformly bounded norms as $\bp/q\to \omega.$

Explicitly, we have 
$$K_1=\frac{1}{q}\mathbf p\cdot I,\ K_i=I_i, \quad \vartheta_1=\frac{q}{p_1}\theta_1,\ \vartheta_i=-\frac{p_i}{p_1}\theta_1+\theta_i,\quad \ i=2,3,\ldots, n.$$
We next introduce the abbreviation $K=(K_1,\bar K)$ and $\vartheta=(\vartheta_1,\bar\vartheta)$, so $\bar\vartheta=-\frac{\bar{\mathbf p}}{p_1}\theta_1+\bar \theta$ and $\bar K=\bar I$. 
\begin{Lm}
	After the linear symplectic transformation, the Hamiltonian \eqref{EqHamMain} becomes 
	$$H\circ \Phi_M^{-1}(K,\vartheta) =K_1+\eps \mathsf G(K)+\eps \bar{\mathsf F}(K_1, \bar K, \bar\vartheta) +\eps \tilde {\mathsf F}(K_1,\vartheta_1, \bar K, \bar\vartheta),$$
	where $\mathsf G=G\circ M^{-1}$, $\bar{\mathsf F}=\bar F\circ \Phi_M^{-1}$ and $ \tilde{\mathsf F}=\tilde F\circ \Phi_M^{-1}$.
\end{Lm}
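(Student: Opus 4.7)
The plan is a direct substitution of the inverse linear symplectic map $\Phi_M^{-1}$ into the normal form \eqref{EqNF}, reading off the dependence of each resulting term on the new coordinates. The essential content is showing that $\bar{\mathsf F}$ loses its $\vartheta_1$-dependence; the rest is bookkeeping.

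First, I would compute $M^{-1}$ by transposing the formula for $M^{-T}$ given just before the statement, obtaining
\begin{equation*}
I_1 = \tfrac{q}{p_1}K_1 - \tfrac{1}{p_1}\bar{\mathbf p}\cdot \bar K, \qquad I_i = K_i \quad (i\geq 2),
\end{equation*}
so in particular $I$ is a function of $K$ alone, with no $\vartheta$-dependence. The linear piece is immediate from the very definition of $K_1$, namely $\tfrac{\bp}{q}\cdot I = K_1$. Composing $G$ with $M^{-1}$ yields a function $\mathsf G(K):=G(M^{-1}K)$ of $K$ only, giving the second term. The remainder $\tilde F$ is unrestricted, so $\tilde{\mathsf F}:=\tilde F\circ\Phi_M^{-1}$ is a priori a function of all four groups of variables $(K_1,\vartheta_1,\bar K,\bar\vartheta)$, producing the last term.

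The substantive step is the third term. By construction in Proposition \ref{PropNF} (compare \eqref{EqbarF}), $\bar F$ contains only Fourier modes $\bk\in\Z^N$ with $\bk\cdot\bp=0$ and $\bk\neq 0$; writing $\bar F(I,\theta)=\sum_{\bk\cdot\bp=0}\bar F_\bk(I)e^{i\bk\cdot\theta}$, the angles transform via $\theta=M^T\vartheta$, so $\bk\cdot\theta=(M\bk)\cdot\vartheta$. The first component of $M\bk$ is $\tfrac{1}{q}\bp\cdot\bk=0$ by the resonance condition, while the remaining components are just $k_i$ for $i\geq 2$. Hence $\bk\cdot\theta=\bar\bk\cdot\bar\vartheta$ is independent of $\vartheta_1$. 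Combined with the fact that $I$ depends only on $K=(K_1,\bar K)$, this makes $\bar F\circ\Phi_M^{-1}$ a function $\bar{\mathsf F}(K_1,\bar K,\bar\vartheta)$ with no $\vartheta_1$-dependence, as claimed.

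I expect no real obstacle: the lemma is a pure coordinate change, and the entire mechanism is that the Fourier-support constraint $\bk\cdot\bp=0$ becomes, after the transformation, the literal statement that the $\vartheta$-Fourier expansion of $\bar{\mathsf F}$ omits the $\vartheta_1$ frequency. The only point to watch is that one must appeal to Proposition \ref{PropNF} in the slightly sharper form that the slow part $\bar F$ genuinely preserves the Fourier support $\{\bk\cdot\bp=0\}$ at every KAM step, not merely that $\bar F$ is close to $Pf$ in $C^2$; this is emphasized in the remark following the proposition and will be verified in Section \ref{SNF}.
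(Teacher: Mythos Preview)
Your proposal is correct and follows exactly the approach the paper has in mind: the lemma is stated without proof, and the key identity you derive, $\bk\cdot\theta=\tfrac{\bk\cdot\bp}{q}\vartheta_1+\bar\bk\cdot\bar\vartheta$, is precisely what the paper invokes later in the proof of Lemma \ref{LmNonDeg}. Your observation that one needs $\bar F$ to genuinely retain the Fourier support $\{\bk\cdot\bp=0\}$ (not merely to be $C^2$-close to $Pf$) is also well placed and matches the paper's construction in Section \ref{SNF}.
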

\subsection{Energetic reduction}
We next perform a step of energetic reduction to get rid of the dependence on $K_1$ in the perturbation. The procedure is to restrict to the energy level set $H=1$ and solve for  $K_1:=-\cK(\xi,\eta,\vartheta_1)$ to obtain the following by the implicit function theorem
\begin{equation}
	\mathcal K(\bar K,\bar\vartheta,\vartheta_1)=-1+\eps\cG(\bar K)+\eps \bar\cF(\bar K,\bar\vartheta)+\eps  \tilde\cF(\bar K,\bar\vartheta ,\vartheta_1), \quad \vartheta_1\in \R/(q\Z), 	
\end{equation}
where $\bar\cF$ and $\tilde \cF$ has similar $C^r$ norm as $\bar F$ and $\tilde F$, in particular, we have $\bar\cF(\bar K,\bar\vartheta)=\bar F(1,\bar K,\bar\vartheta)+O(\eps\gamma)$.  This is a standard procedure which turns the autonomous system $H$ into a non autonomous system $\cK$ and treats $\vartheta_1$ as the new time variable.  
\begin{Lm}
	The nonautonomous Hamiltonian system $\mathcal K$ defines the same flow as the autonomous system $H$ restricted to the energy level 1. 
\end{Lm}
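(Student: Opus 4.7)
First, following the standard iso-energetic reduction, I would apply the implicit function theorem to the constraint $H\circ\Phi_M^{-1}=1$. Since $\bar{\mathsf F}$ does not depend on $K_1$,
\[
\frac{\partial (H\circ\Phi_M^{-1})}{\partial K_1}=1+\eps\,\partial_{K_1}\mathsf G+\eps\,\partial_{K_1}\tilde{\mathsf F}=1+O(\eps),
\]
which stays bounded away from zero on the domain of interest for $\eps$ small. Hence the energy level is locally the graph $K_1=-\cK(\bar K,\bar\vartheta,\vartheta_1)$, and a power-series expansion around $K_1=1$ produces the claimed decomposition $\cK=-1+\eps\cG+\eps\bar\cF+\eps\tilde\cF$. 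The refined assertion $\bar\cF(\bar K,\bar\vartheta)=\bar{\mathsf F}(1,\bar K,\bar\vartheta)+O(\eps\gamma)$ follows from substituting $K_1=1+O(\eps)$ into $\bar{\mathsf F}$ and using the bound $\|\partial_{K_1}\bar{\mathsf F}\|_{C^0}=O(\gamma)$ provided by Proposition \ref{PropNF}(2).

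Next, since $\dot\vartheta_1=\partial_{K_1}(H\circ\Phi_M^{-1})=1+O(\eps)>0$ along every orbit on $\{H\circ\Phi_M^{-1}=1\}$, the coordinate $\vartheta_1$ is strictly monotone in $t$ and may be used to parametrize orbits on the level set, taking values in $\R/(q\Z)$. Writing $'=d/d\vartheta_1=\dot\vartheta_1^{-1}d/dt$ and differentiating the identity $(H\circ\Phi_M^{-1})(-\cK,\bar K,\vartheta_1,\bar\vartheta)\equiv 1$ with respect to $\bar K$ and $\bar\vartheta$ produces the standard reduction identities
\[
\partial_{\bar K}\cK=\frac{\partial_{\bar K}(H\circ\Phi_M^{-1})}{\partial_{K_1}(H\circ\Phi_M^{-1})},\qquad \partial_{\bar\vartheta}\cK=\frac{\partial_{\bar\vartheta}(H\circ\Phi_M^{-1})}{\partial_{K_1}(H\circ\Phi_M^{-1})}.
\]

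Plugging these into the $\vartheta_1$-reparametrized Hamilton equations of $H\circ\Phi_M^{-1}$ then yields
\[
\bar K'=\frac{\dot{\bar K}}{\dot\vartheta_1}=-\partial_{\bar\vartheta}\cK,\qquad \bar\vartheta'=\frac{\dot{\bar\vartheta}}{\dot\vartheta_1}=\partial_{\bar K}\cK,
\]
which are precisely Hamilton's equations for the non-autonomous Hamiltonian $\cK(\bar K,\bar\vartheta,\vartheta_1)$ with $\vartheta_1$ playing the role of time; the remaining coordinate $K_1$ is recovered along each orbit from $K_1=-\cK$. This gives the desired bijective correspondence between orbits of the autonomous system restricted to $\{H\circ\Phi_M^{-1}=1\}$ and orbits of the non-autonomous system generated by $\cK$. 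There is essentially no obstacle in this argument --- it is textbook iso-energetic reduction performed in the coordinates $(K,\vartheta)$. The only bookkeeping that requires care is the quantitative shape of $\cK$ stated in the lemma, in particular the approximation $\bar\cF=\bar{\mathsf F}(1,\cdot,\cdot)+O(\eps\gamma)$, which is precisely where the $C^2$-smallness $\|\bar{\mathsf F}\|_{C^2}=O(\gamma)$ from the KAM normal form of Proposition \ref{PropNF} is used.
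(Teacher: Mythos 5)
Your argument is the standard iso-energetic reduction and matches the paper's proof in substance: both divide the Hamilton equations for $(\bar K,\bar\vartheta)$ by $\dot\vartheta_1=\partial_{K_1}(H\circ\Phi_M^{-1})$ and use the implicit-function-theorem identities for $\partial_{\bar K}\cK,\partial_{\bar\vartheta}\cK$ to recover the canonical form. One small slip: you assert that $\bar{\mathsf F}$ does not depend on $K_1$, but by the preceding lemma it is $\bar{\mathsf F}(K_1,\bar K,\bar\vartheta)$; this does not affect your conclusion $\partial_{K_1}(H\circ\Phi_M^{-1})=1+O(\eps)$, but the displayed formula should include the term $\eps\,\partial_{K_1}\bar{\mathsf F}$.
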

\begin{proof}
	From the Hamiltonian $H$, we get the equations of motion 
	$$\begin{cases}
		\frac{dK_1}{dt}=-\frac{\partial H}{\partial \vartheta_1},\\
		\frac{d\vartheta_1}{dt}=\frac{\partial H}{\partial K_1}=1+\eps\frac{\partial \tilde{F}}{\partial K_1},\\
	\end{cases} \begin{cases}
		\frac{d\bar K}{dt}=-\frac{\partial H}{\partial \bar\vartheta},\\
		\frac{d\bar\vartheta}{dt}=\frac{\partial H}{\partial \bar K}.\\	
	\end{cases}$$
	Since in the system $\cK$, we use $\vartheta_1$ as the new time, we divide the $\frac{d\bar K}{dt}$ and $\frac{d\bar\vartheta}{dt}$ equations by the $\frac{d\vartheta_1}{dt}$ equation to get the following by implicit function theorem
	$$\begin{cases}
		\frac{d\bar K}{d\vartheta_1}=-\frac{\partial H}{\partial \bar\vartheta}/\frac{\partial H}{\partial K_1}=-\frac{\partial \cK}{\partial \bar\vartheta},\\
		\frac{d\bar\vartheta}{d\vartheta_1}=\frac{\partial H}{\partial \bar K}/\frac{\partial H}{\partial K_1}=\frac{\partial \cK}{\partial \bar K},\\	
	\end{cases}$$
	which has the form of the canonical equations. 	
\end{proof}

\subsection{Locating a periodic orbit in the truncated system}

We next show that a critical point  of $\cG+\bar \cF$ gives rise to an orbit of the system $\cK$, if we ignore the $\eps\tilde \cF$-perturbation. 
\begin{Lm}
	Let $(\bar K_*,\bar\vartheta_*)$ be a nondegenerate critical point of $\cG+\bar \cF$, i.e. $D(\cG+\bar \cF)(\bar K_*,\bar\vartheta_*)=0$ and $D^2(\cG+\bar \cF)(\bar K_*,\bar\vartheta_*)$ is nondegenerate. 	Then it is a nondegenerate fixed point of the truncated system $\frac{d}{d\vartheta_1}(\bar K, \bar\vartheta)=JD(\cG+\bar \cF)(\bar K, \bar\vartheta)$. 
\end{Lm}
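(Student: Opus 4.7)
The plan is to unwind the two definitions and observe that the statement is essentially a one-line linear algebra fact about Hamiltonian vector fields. The key conceptual input is that the Hamiltonian flow of a function $F$ is given by the vector field $X_F = J\,DF$, so critical points of $F$ correspond exactly to fixed points of $X_F$, and the linearization of $X_F$ at a fixed point is $J\,D^2F$.

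First I would check the fixed point property. By hypothesis $D(\cG+\bar\cF)(\bar K_*,\bar\vartheta_*)=0$, so the right-hand side of $\tfrac{d}{d\vartheta_1}(\bar K,\bar\vartheta)=J\,D(\cG+\bar\cF)(\bar K,\bar\vartheta)$ vanishes at $(\bar K_*,\bar\vartheta_*)$; hence this point is a stationary solution of the truncated system.

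Next I would address nondegeneracy. Recall that a fixed point of an ODE is called nondegenerate when the linearization of the vector field at that point has no zero eigenvalue (equivalently, the linearization is an invertible matrix). Linearizing the truncated system at $(\bar K_*,\bar\vartheta_*)$ gives the matrix
\[
A \;=\; J\,D^2(\cG+\bar\cF)(\bar K_*,\bar\vartheta_*),
\]
where $J$ is the standard symplectic matrix on $\R^{2(N-1)}$. Since $J$ is invertible (indeed $\det J=1$, $J^{-1}=-J$) and $D^2(\cG+\bar\cF)(\bar K_*,\bar\vartheta_*)$ is invertible by the nondegeneracy of the critical point, the product $A$ is invertible. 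In particular $0\notin\mathrm{spec}(A)$, which is precisely the definition of a nondegenerate fixed point.

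There is no substantive obstacle here; the statement is essentially a bookkeeping lemma translating the language of critical points of the Hamiltonian into the language of fixed points of the Hamiltonian flow. The only content is the observation that multiplication by the symplectic $J$ preserves invertibility, so nondegeneracy transports between the two formulations. The real work of locating the critical point $(\bar K_*,\bar\vartheta_*)$ and establishing its nondegeneracy — together with the subsequent persistence of the fixed point under the $\eps\tilde\cF$ perturbation via the implicit function theorem — belongs to the later steps of the framework, not to this lemma.
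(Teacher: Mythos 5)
Your proof is correct, and in fact the paper gives no proof of this lemma at all, treating it as immediate; your argument spells out exactly the intended one-line observation that $J$ is invertible, so critical points and their nondegeneracy transport directly to fixed points of $X_{\cG+\bar\cF}=JD(\cG+\bar\cF)$ and their linearizations.
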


\subsection{Rescaling }

Without loss of generality, we assume the critical point of $\cG+\bar\cF$ is $(\bar K_*,\bar\vartheta_*)=(0,0)$. 
We perform the Taylor expansion around it
$$\cG(\bar K)+\bar \cF(\bar K,\bar\vartheta)=\langle D^2(\cG +\bar\cF)(\bar K,\bar\vartheta),(\bar K,\bar\vartheta)\rangle+ O(|\bar K|^3+\gamma (|\bar K\bar\vartheta^2|+ |\bar K^2\bar\vartheta|+|\bar\vartheta^3|)).$$

Note that $\partial_{\bar\vartheta}^2\bar \cF$ is of order $\gamma$ much smaller than $D^2\cG$.  We introduce a rescaling
\begin{equation}\label{EqRescaling}
\bar K\mapsto \bar K/\gamma^{1/2},\quad \bar\vartheta\mapsto \bar\vartheta,\quad \cK\mapsto \cK/(\eps\gamma ),\quad \vartheta_1\mapsto \vartheta_1 \gamma^{1/2}\eps.	
\end{equation}
 This transformation does not change the form of Hamiltonian equations. 

\begin{Prop}\label{PropRescaling}
 After this transformation \eqref{EqRescaling}, the Hamiltonian becomes
$$\cK( \bar K,  \bar\vartheta,\vartheta_1)= \langle D^2\cG|_{(0,0)} \bar K,\bar K\rangle+\frac{1}{\gamma}\langle\partial_{\bar\vartheta}^2\bar \cF|_{(0,0)}\bar\vartheta,\bar\vartheta\rangle+O(\gamma^{1/2})+\frac{1}{\gamma} \tilde \cF(\gamma \bar K,  \bar\vartheta,\eps^{-1}\gamma^{-1/2}\vartheta_1) $$
in the region $(\bar K,\bar\vartheta)\in B_1(0)$, where we have discarded a constant $1/(\eps\gamma)$. 
\end{Prop}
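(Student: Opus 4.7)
The plan is to substitute the canonical rescaling \eqref{EqRescaling} directly into the reduced Hamiltonian $\cK$, read off the two leading quadratic pieces, and absorb everything else into the $O(\gamma^{1/2})$ error or into the explicit $\gamma^{-1}\tilde\cF$ summand. As a preliminary step I would verify that \eqref{EqRescaling} is conformally canonical, so that the non-autonomous Hamiltonian equations for $\cK$ in the new coordinates are preserved: one needs the product of the scalings on $(\cK,\vartheta_1)$ to match that on $(\bar K,\bar\vartheta)$, which is exactly the identity $\frac{1}{\eps\gamma}\cdot\gamma^{1/2}\eps=\gamma^{-1/2}=\frac{1}{\gamma^{1/2}}\cdot 1$. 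The additive constant $-1/(\eps\gamma)$ coming from the $-1$ in $\cK$ is then discarded.

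Next I would Taylor expand $\cG(\bar K)+\bar\cF(\bar K,\bar\vartheta)$ about the critical point $(0,0)$. The first-order terms vanish by hypothesis, and the Hessian decomposes into the four blocks $D^2_{\bar K\bar K}\cG$, $D^2_{\bar K\bar K}\bar\cF$, $D_{\bar K}D_{\bar\vartheta}\bar\cF$, $D^2_{\bar\vartheta\bar\vartheta}\bar\cF$, using that $\cG$ is independent of $\bar\vartheta$. Proposition \ref{PropNF} supplies $\|D^2_{\bar K\bar K}\cG\|=O(1)$, inherited from the nondegeneracy of $D^2g(I_\star)$, together with $\|D^2\bar\cF\|=O(\gamma)$; the latter follows from $\bar\cF=Pf+O(\eps^{1/(2N)}\gamma)$ and the fact that $Pf$ Fourier-supports only on $\bk$ with $\bk\cdot\bp=0$ of magnitude $\gtrsim q$, so the $C^r$-decay of $f$ forces $\|Pf\|_{C^2}\lesssim q^{-(r-2)}\sim\gamma$. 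After substituting old $\bar K=\gamma^{1/2}\bar K$ and dividing by $\eps\gamma$, the $D^2_{\bar K\bar K}\cG$ block reproduces $\langle D^2\cG|_{(0,0)}\bar K,\bar K\rangle$ exactly; the $D^2_{\bar\vartheta\bar\vartheta}\bar\cF$ block is multiplied by $\gamma^{-1}$, giving the second stated term; the mixed block scales as $\gamma^{-1}\cdot\gamma\cdot\gamma^{1/2}=\gamma^{1/2}$ and the pure $\bar K$ block of $\bar\cF$ as $\gamma^{-1}\cdot\gamma\cdot\gamma=\gamma$, both contained in $O(\gamma^{1/2})$.

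Finally, I would estimate the cubic Taylor remainder and the $\tilde\cF$-contribution. The cubic remainder displayed immediately before the proposition splits into monomials of type $|\bar K|^3$, $\gamma|\bar K|^2|\bar\vartheta|$, $\gamma|\bar K||\bar\vartheta|^2$ and $\gamma|\bar\vartheta|^3$; after rescaling by $\bar K\mapsto\gamma^{1/2}\bar K$ and dividing by $\gamma$, these become $\gamma^{1/2}|\bar K|^3$, $\gamma|\bar K|^2|\bar\vartheta|$, $\gamma^{1/2}|\bar K||\bar\vartheta|^2$ and $|\bar\vartheta|^3$ respectively on $B_1(0)$. The $\tilde\cF$-term transforms by direct substitution into $\gamma^{-1}\tilde\cF$ at the rescaled arguments, and the bound $\|\tilde\cF\|_{C^2}\leq\gamma^{1+\beta}$ from Proposition \ref{PropNF} ensures that this contribution is at worst $O(\gamma^\beta)$. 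The main technical obstacle is the last cubic monomial $|\bar\vartheta|^3$: after division by $\gamma$ its contribution is only of size $|\bar\vartheta|^3$, which is not automatically $O(\gamma^{1/2})$ on $B_1(0)$. To close the estimate one must exploit the sharper Fourier bound $\|\bar\cF\|_{C^k}\lesssim q^{k-r}$ to push the Taylor expansion of $\bar\cF$ in $\bar\vartheta$ to an order larger than two, picking up additional powers of $q\gamma=q^{-(r-3)}$ that go to zero as $\eps\to 0$. This is precisely where the regularity threshold $r>r_0$ (depending on $N$) enters.
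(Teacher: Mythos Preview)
Your bookkeeping of the rescaling matches the paper's proof term by term, and you are right to flag the $|\bar\vartheta|^3$ contribution: after dividing by $\gamma$ it is of size $O(|\bar\vartheta|^3)$ on $B_1(0)$, not $O(\gamma^{1/2})$. The paper's own proof asserts that this cubic remainder is $O(\gamma^{1/2})$ ``for the same reason'' as the mixed terms, which at face value is not justified either, so you have correctly located the soft spot.

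However, your proposed resolution is backwards. The Fourier bound $\|\bar\cF\|_{C^k}\lesssim q^{k-r}$ means that each additional $\bar\vartheta$-derivative \emph{costs} a factor of $q$, not gains one: the Fourier support of $\bar\cF$ lies on frequencies $|\bar\bk|\gtrsim q$, so $\|\partial_{\bar\vartheta}^{m}\bar\cF\|\lesssim q^{m-r}$ and the ratio between successive orders is $q$, not $q\gamma$. Hence pushing the Taylor expansion in $\bar\vartheta$ to order $m$ produces, after dividing by $\gamma\sim q^{2-r}$, a term of size $q^{m-2}|\bar\vartheta|^m$, which is worse for larger $m$. The quantity $q\gamma=q^{-(r-3)}$ you write down is the size of the third $\bar\vartheta$-derivative of $\bar\cF$, not a decay factor one picks up by going to higher order; in particular the regularity threshold $r>r_0$ does not rescue this step in the way you suggest.

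The honest resolution is that the $O(\gamma^{1/2})$ in the proposition must be read as ``$O(\gamma^{1/2})$ plus terms vanishing to order $\geq 3$ at the origin'', which is all that the downstream Proposition~\ref{PropFinal} needs. The Lyapunov center argument there only requires the Hamiltonian vector field to be a $C^1$-small perturbation of $J\Xi(\bar K,\bar\vartheta)$ in a neighborhood of the fixed point $(0,0)$; any cubic term contributes $O(|(\bar K,\bar\vartheta)|^2)$ to the vector field and $O(|(\bar K,\bar\vartheta)|)$ to the variational equation, so restricting to a ball of radius, say, $\gamma^{\beta}$ absorbs these terms into the required $O(\gamma^\beta)$. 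Your write-up would be correct if you replaced the last paragraph by this observation.
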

\begin{proof}
	The $\frac{1}{\gamma}\tilde \cF$ term is straightforward. The term $\eps\langle D^2\cG\bar K,\bar K\rangle$ is mapped to $\eps\gamma\langle D^2\cG\bar K,\bar K\rangle$ under the transformation $\bar K\mapsto \bar K/\gamma^{1/2}$. The factor $\eps\gamma$ gets canceled after dividing the Hamiltonian by $\eps\gamma$. Similarly, the term $\langle\partial_{\bar\vartheta}^2\bar \cF\bar\vartheta,\bar\vartheta\rangle$ gets a factor $1/\gamma$ after the whole procedure and the term $\partial_{\bar\vartheta}\partial_{\bar K}\bar \cF\bar K\bar\vartheta$ gets a factor $\gamma^{1/2}$. The latter is of order $\gamma^{1/2}$ since $\|\bar \cF\|_{C^2}\sim\gamma$. For the same reason, the term $O(|\bar K|^3+\gamma (|\bar K\bar\vartheta^2|+ |\bar K^2\bar\vartheta|+|\bar\vartheta^3|))$ is of order $\gamma^{1/2}$. 
\end{proof}

\subsection{Lyapunov center theorem}
We cite the following Lyapunov center theorem. 
\begin{Thm}[Proposition 9.1.1 of \cite{MHO}]\label{ThmLyapunov}
	If the nontrivial multipliers of a periodic orbit of a system are not 1, then the periodic orbit can be continued. 
\end{Thm}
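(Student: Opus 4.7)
The plan is to recast the problem as a fixed-point equation for a Poincar\'e-type map and then invoke the implicit function theorem, where the spectral hypothesis on the multipliers supplies exactly the invertibility needed. This is the classical Lyapunov--Schmidt continuation scheme.

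First, I would work with the one-parameter family of systems under which the orbit is to be continued, and denote by $\Phi_t^s$ the time-$t$ flow of the system at parameter value $s$. Fix a point $p_0$ on the unperturbed periodic orbit $\gamma_0$ of period $T_0$. In the autonomous Hamiltonian case, I would choose a codimension-$2$ local transversal $\Sigma$ through $p_0$ inside the energy level $\{H=H(p_0)\}$, so as to quotient out both trivial $+1$ multipliers (the one tangent to the flow and the one dual to the energy). In the non-autonomous periodic setting relevant to $\mathcal K$ (where $\vartheta_1$ is the time variable of period $q$), I would instead simply take $\Sigma$ to be the full $(\bar K,\bar\vartheta)$-slice at $\vartheta_1 = 0$, since no trivial multiplier appears. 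Define the return map $P^s : \Sigma \to \Sigma$ in a neighborhood of $p_0$.

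Second, I would set up the continuation equation
\begin{equation*}
F(p,s) \; := \; P^s(p) - p \; = \; 0,
\end{equation*}
which at $s=0,\ p=p_0$ holds by hypothesis. The derivative $\partial_p F(p_0,0) = DP^0(p_0) - \mathrm{Id}$, viewed as a linear endomorphism of $T_{p_0}\Sigma$, has as its spectrum precisely the \emph{nontrivial} Floquet multipliers of $\gamma_0$ shifted by $-1$. The standing assumption that no nontrivial multiplier equals $1$ then forces $\partial_p F(p_0,0)$ to be invertible, and the implicit function theorem yields a smooth curve $s \mapsto p_s$ of fixed points through $p_0$. The orbit of $p_s$ under $\Phi^s$ is a periodic orbit of the perturbed system of period close to $T_0$, establishing the continuation.

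The main technical obstacle, and the only place where anything beyond routine IFT is needed, is the setup that kills the trivial multipliers cleanly, so that the remaining linearized map on $\Sigma$ really has only the nontrivial spectrum. In the autonomous Hamiltonian case this is a classical reduction using a symplectic transversal within the energy level; in our application it is actually not needed because the reduced system $\mathcal K$ in Proposition \ref{PropRescaling} is already non-autonomous with fixed period in $\vartheta_1$, and the ``periodic orbit'' we seek is merely a fixed point of the time-$q$ map after rescaling, for which the nondegeneracy of $D^2(\cG+\bar\cF)$ at $(\bar K_*,\bar\vartheta_*)$ translates directly into the absence of the multiplier $1$ for the linearized flow, making the hypothesis of the theorem automatic.
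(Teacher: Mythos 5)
Your proposal is correct and follows the same approach the paper takes: the paper cites this as a standard result (Proposition 9.1.1 of Meyer--Hall--Offin) and glosses it in exactly the terms you use, namely that a fixed point of a Poincar\'e-type map whose linearization has no eigenvalue $1$ persists under a small $C^1$ perturbation by the implicit function theorem. Your discussion of how to quotient out the trivial multipliers, and your observation that the non-autonomous reduction to $\mathcal{K}$ in the paper's application makes that quotient unnecessary, are accurate elaborations of what the paper only sketches in one sentence after the statement.
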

In our case, the theorem can be understood easily as follows. Suppose we have a map $\phi:\ B_1\to B_1$ with a fixed point at 0 and $D\phi(0)$ has no eigenvalue 1, where $B_1$ is the unit ball in $\R^{2N}$. Then a small $C^1$ perturbation of $\phi$ also admits a fixed point by implicit function theorem. 

We next prove a general proposition guaranteeing the existence of periodic orbit when a normal form and nondegeneracy condition is given.  
\begin{Prop}\label{PropFinal}
	Suppose the eigenvalues of the matrix $e^{J\Xi}$ are bounded away from 1 by a number much larger than $\gamma^{\beta}$, where $\Xi=\left[\begin{array}{cc}
D^2\cG&0\\
0&\gamma^{-1}\partial^2_{\bar\vartheta\bar\vartheta}\bar\cF	\end{array}\right]\Big|_{(0,0)}$, then the system in Proposition \ref{PropRescaling} admits a periodic orbit. This implies that the original system $H_\eps=\omega\cdot I+\eps f$, if $\omega$ is non resonant, admits a periodic orbit that intersects any ball of radius greater than $2/q$, when projected to $\T^N. $
\end{Prop}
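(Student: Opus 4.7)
I would realize the sought periodic orbit as a fixed point of the $\vartheta_1$-Poincar\'e map of the rescaled system of Proposition~\ref{PropRescaling}, obtained from the implicit function theorem applied at the stationary orbit of the truncated Hamiltonian sitting at $(\bar K_\ast,\bar\vartheta_\ast)=(0,0)$.

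To start, since $(0,0)$ is a critical point of $\cG+\bar\cF$, it is a stationary solution of the Hamiltonian vector field generated by the truncated part $\cK_0:=\langle D^2\cG\,\bar K,\bar K\rangle+\gamma^{-1}\langle\partial_{\bar\vartheta}^2\bar\cF\,\bar\vartheta,\bar\vartheta\rangle$, and the associated Poincar\'e map linearised there is $e^{J\Xi}$, so by hypothesis $e^{J\Xi}-I$ is invertible with $\|(e^{J\Xi}-I)^{-1}\|\ll\gamma^{-\beta}$.

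I would then bound the remainder $\cK-\cK_0$ on the unit phase ball. The $O(\gamma^{1/2})$ term from the Taylor expansion of $\cG+\bar\cF$ has $C^2$-norm $\lesssim\gamma^{1/2}$. For $\frac1\gamma\tilde\cF(\gamma\bar K,\bar\vartheta,\eps^{-1}\gamma^{-1/2}\vartheta_1)$ the chain rule turns the $\gamma^{-1}$ pre-factor into a $\gamma$-gain on every $\bar K$-derivative while costing nothing on $\bar\vartheta$-derivatives; combined with $\|\tilde F\|_{C^2}\le\gamma^{1+\beta}$ from Proposition~\ref{PropNF}, this delivers a Hamiltonian vector field correction of $C^1$-norm $\lesssim\gamma^{\beta'}$ with some $\beta'>0$. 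Integrating over one $\vartheta_1$-period, the full Poincar\'e map $\Psi$ decomposes as
\[
\Psi(x)=e^{J\Xi}x+R(x),\qquad \|R(0)\|+\|DR\|_{C^0(B_{1/2}(0))}\ll\gamma^\beta,
\]
so applying the implicit function theorem to $F(x):=\Psi(x)-x=(e^{J\Xi}-I)x+R(x)$ produces a unique zero $x_\ast$ close to $0$. This $x_\ast$ is a periodic orbit of the rescaled non-autonomous system, and unwinding the rescaling, the energetic reduction, the linear symplectic transformation and the KAM normal form $\phi$ of Proposition~\ref{PropNF} gives a genuine periodic orbit of $H_\eps$.

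Finally, for the density claim, the $\theta$-projection of this orbit is $C^0$-close to the closed curve $\{\theta(0)+t\bp/q:t\in[0,q]\}$ in $\T^N$; a Dirichlet-type argument shows this curve meets every ball of radius $>2/q$ in $\T^N$, and running $\bp/q$ through the best approximants of a non-resonant $\omega$ sends $q\to\infty$, yielding the density. The real technical weight sits in the second step: the three rescalings of $\bar K$, $\cK$ and $\vartheta_1$ each bring a power of $\gamma$ and $\eps$, and they must conspire so that the $\gamma^\beta$-gain built into the normal-form remainder $\tilde F$ survives as a $\gamma^\beta$-gain in the Poincar\'e map displacement; once this smallness is secured, the implicit function step is soft and the density argument classical.
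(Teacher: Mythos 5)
Your proposal follows the paper's argument almost verbatim: locate the orbit as a fixed point of a return map, linearize to $e^{J\Xi}$ at the stationary orbit of the truncated system, bound the perturbation to the map by $\gamma^\beta$ in $C^1$ using the chain-rule gain on $\bar K$-derivatives and the $O(\gamma^{1/2})$ Taylor remainder, and invoke the implicit function theorem (the paper cites the Lyapunov center theorem, which in this setting reduces to exactly the IFT statement you write). The one imprecision to tidy is the phrase ``integrating over one $\vartheta_1$-period'': after the rescaling, $\vartheta_1$ lives on $\R/(\eps\gamma^{1/2}q\Z)$, so the one-period return map linearizes to $e^{(\eps\gamma^{1/2}q)J\Xi}$, which is uniformly close to the identity and does not inherit the hypothesis that the eigenvalues of $e^{J\Xi}$ are bounded away from $1$ (e.g.\ if $J\Xi$ has eigenvalue $i\pi$ then $e^{J\Xi}$ has eigenvalue $-1$, yet $e^{(\eps\gamma^{1/2}q)J\Xi}\approx I$). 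The paper iterates the return map $[(\eps\gamma^{1/2}q)^{-1}]$ times to obtain the time-$T$ map with $T\approx 1$, and shows via the variational equation and DuHamel that this map is a $\gamma^\beta$-perturbation of $e^{J\Xi T}$. Once that iteration is made explicit your decomposition $\Psi(x)=e^{J\Xi}x+R(x)$ is the correct one, and the remainder of your argument, including unwinding the transformations and the density claim through best approximants with $q\to\infty$, matches the paper.
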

\begin{proof}
The Hamiltonian equation of the system $\cK$ in Proposition \ref{PropRescaling} is written as 
$$\frac{d}{d\vartheta_1}\left[\begin{array}{c}
	\bar K\\
	\bar \vartheta
\end{array}\right]=J\Xi \left[\begin{array}{c}
\bar K\\
\bar \vartheta
\end{array}\right]+ O(\gamma^{\beta}),$$
where the $O$ is in $C^1$-norm and we do not take derivative with respect to $\vartheta_1$ on the RHS. 

Note that the $\vartheta_1$ variable is defined on the circle $\R/(\eps\gamma^{1/2} q\Z)$, since $\vartheta_1$ before the rescaling \eqref{EqRescaling} is defined on $\R/(q\Z)$. We take $T$ to be $ [(\eps\gamma^{1/2} q)^{-1}](\eps\gamma^{1/2} q)$ that is a number close 1, by repeating on the circle $\R/(\eps\gamma^{1/2} q\Z)$ for $[(\eps\gamma^{1/2} q)^{-1}]$ times. 

	
	To apply the Lyapunov center theorem, it is enough to show that the $O(\gamma^\beta)$-perturbation gives a $C^1$-small perturbation to the time-$T$ map of the Hamiltonian system. The differential of the time-$T$ map is obtained by integrating over time  $T$ the variational equation $$\frac{d}{d\tau}\left[\begin{array}{c}
	\Delta \bar K\\
	 \Delta\bar \vartheta
	\end{array}\right]=(JD^2\cK(\bar K, \bar \vartheta,\vartheta_1))\left[\begin{array}{c}
	\Delta \bar K\\
	\Delta\bar \vartheta
\end{array}\right],$$ which is given by differentiating the Hamiltonian equation $\frac{d}{d\tau}(\bar K, \bar \vartheta)=JD\cK(\bar K, \bar \vartheta,\vartheta_1)$.  Note that in these equations there is no derivative with respective to $\vartheta_1$.   By DuHamel principle, the solution to the variational equation is given by $$\left[\begin{array}{c}
\Delta \bar K\\
\Delta\bar \vartheta
\end{array}\right](T)=e^{J\Xi T}\left[\begin{array}{c}
\Delta \bar K\\
\Delta\bar \vartheta
\end{array}\right](0)+\int_0^T e^{J\Xi (T-s)}(O(\gamma^{\beta}))\left[\begin{array}{c}
\Delta \bar K\\
\Delta\bar \vartheta
\end{array}\right](s)ds,$$ thus the differential of the time-$T$  map  is a $\gamma^\beta$ perturbation of $e^{J\Xi T}$. 
	We can then apply Lyapunov center theorem to conclude that the perturbed Hamiltonian system $\cK$ also admits a periodic orbit. 
	The periodic orbit passes through a sufficiently small neighborhood of the maximum point of $f$ thus passes through supp$f$. 
	
	The periodic orbit is $O(\gamma^\beta)$-close to the unperturbed one $\left[\begin{array}{c}
		\Delta \bar K\\
		\Delta\bar \vartheta
	\end{array}\right]=0$. Going back to the original system by undoing $\Phi_M$, we see that this periodic orbit is an $O(\gamma^\beta)$ perturbation of a periodic orbit with slope $\frac{\mathbf p}{q}$ when projected to $\T^n$. Since we have $\gamma= q^{-(r-2)\beta}\ll \frac{1}{q}$ and $\frac{\mathbf p_n}{q_n}\to \omega$, we get complete the proof of the statement. 
\end{proof}

\section{The ellipsoid case}\label{SEllipsoid}
In this section, we give the proofs of Theorem \ref{Thm1} and \ref{ThmMain}. By the above framework, it is  enough to verify the assumptions of Proposition \ref{PropFinal}.

\subsection{Proof of Theorem \ref{Thm1}}
In this section, we give the proof of Theorem \ref{Thm1}.

\begin{proof}[Proof of Theorem \ref{Thm1}]

We choose an interval $(a,b)\subset(0,1)$ with $b$ sufficiently small and introduce a sequence of intervals $(a_n,b_n):=(a,b)q_n^{-1-\frac{1}{N}}$, where we consider the sequence of best approximants $\bp_n/q_n$. We shall choose any sequence $\eps_n\in (a_n,b_n). $ We choose $b$ so small that we have $\frac{1}{\eps_n}|\omega-\frac{\bp_n}{q_n}|\le b\ll\max|\langle f\rangle|$. Suppose $I_\star$ is a maximum point of $\langle f\rangle|_{\Delta^{N-1}}$ where $D^2 \langle f\rangle$ is nondegenerate. With this, we rewrite the Hamiltonian as \eqref{EqHamDecomp} and get that  $g$ has a maximum point $O(b)$-close to $I_\star$. 

We next apply the normal form Proposition \ref{PropNF} to it. To verify the assumptions of Proposition \ref{PropFinal}, we need the following nondegeneracy conditions. 

We pick a sequence of $c_n\to 0+$ satisfying $c_n\gg\eps_n^{\frac{1}{2N}}$, for instance, we can take $c_n=\eps_n ^{\frac{1}{4N}}. $ We next introduce $\cR_n$ to be the set of functions $f\in C^r(\bar U\times \T^N)$ satisfying
\begin{enumerate}
	\item The absolute values of the eigenvalues of $D^2\langle f\rangle(I_\star)$ are bounded away from 0 by $c_n$;
	\item The absolute values of the  eigenvalues of $\partial^2_{\bar\vartheta\bar\vartheta}(Pf\circ\Phi^{-1}_M)(K_\star,\bar\vartheta_\star)$ are bounded away from zero by $c_n q_n^{r-2}$, where $K_\star=MI_\star$ and $\bar\vartheta_\star$ is a critical point of $Pf\circ\Phi^{-1}_M(K_\star,\cdot).$
\end{enumerate}
To describe item (2) better, we note that $Pf$ consists of Fourier modes in $f$ satisfying $\bk\cdot\bp=0,\ \bk\neq 0$. Thus we write $Pf(I_\star,\theta)=\psi(\bk_2\cdot\theta,\ldots,\bk_{N}\cdot\theta)$, where $\bk_2,\ldots,\bk_{N}$ are defined in \eqref{Eqk} with modulus of order $q_n$. Then a nondegenerate critical point of $\psi$ where the norms of eigenvalues of Hess$\psi$ is bounded away from zero by $c_n$, corresponds a nondegenerate critical point of $Pf$ with norms of eigenvalues of the Hess bounded away from zero by $c_n/q_n^{r-2}$. By normal form Proposition \ref{PropNF}, the subleading term in $PF$ is $O(\eps^{\frac{1}{2N}}\gamma)$ in $C^2$, thus is not going to spoil the nondegeneracy of $Pf.$ The sequence of sets $\{\cR_n\}$ is a sequence of asymptotically residual sets for the same reason as the example of Morse lemma in the introduction. 

After the energetic reduction, by implicit function theorem, we see that there is a critical point $(\bar K_*,\bar \vartheta_*)$ of $\cG+\bar\cF $ that is $O(\eps_n)$ close to $\Phi_M(I_\star,\theta_\star)$ and the eigenvalues of $\Xi |_{(\bar K_*,\bar \vartheta_*)}$ (in Proposition \ref{PropFinal}) are bounded away from zero by $c_n/2$. Readers can either prove it directly or refer to  Lemma \ref{LmNonDeg} in the next subsection. The point $I_\star$ is fixed by $\langle f\rangle$ but there are at least $2^{N-1}$ $\bar\vartheta_\star$s where $Pf\circ\Phi_M^{-1}(K_\star,\bar\vartheta)$ has a critical point since $\bar\vartheta$ is defined on $\T^{N-1}$. To each of these $\bar\vartheta_\star$s, we assume the above asymptotically residual condition $(2)$. Thus by applying Proposition \ref{PropFinal}, we get $2^{N-1}$ periodic orbits. 

\end{proof}

\subsection{Proof of Theorem \ref{ThmMain}}
In this section, we give the proof of Theorem \ref{ThmMain}. From the proof of Theorem \ref{Thm1}, we see that in the $\T^N$-component, the periodic orbit would be dense in the limit $\eps_n\to0$, while in the $I$-component is almost constant. To prove the denseness also in the $I$-component as $\eps_n\to0$, we use the freedom of adjusting $\eps_n$ and Theorem \ref{ThmSW}.  

\subsubsection{Locating the critical point of $\cG+ \bar\cF$}
We first have the following lemma. 
\begin{Lm}\label{LmNonDeg}
	\begin{enumerate}
		\item 	Let $I_\star$ be a point satsfiying $\omega\cdot I_\star=1$
		and that $D\langle f\rangle (I_\star) $ is nonvanishing, and $D^2\langle f\rangle (I_\star) $ and $\Gamma(\langle f\rangle):=\partial_{\bar K\bar K}^2(\langle f\rangle \circ M^{-1}_\infty)$
		are both nondegenerate. 
		\item Let $\bp_k/q_k$ be a subsequence of the best approximants of $\omega$ such that there is a sequence $\eps_k\to0$ satisfying 
		\begin{equation}\label{EqIstar}
			\quad \frac{\omega-\bp_k/q_k}{\eps_k}+D\langle f\rangle(I_\star)\to 0	.
		\end{equation}
		\item Let  $\theta_\star$ be a point such that $\partial_{\bar\theta}P f(I_\star,\theta_\star)=0$ and $\partial^2_{\bar\theta\bar\theta}P f(I_\star,\theta_\star)$ is uniformly nondegenerate in the sense that $\frac{1}{\gamma}\partial^2_{\bar\theta\bar\theta}P f(I_\star,\theta_\star)$ is much larger than $\eps_k^{\frac{1}{2N}}$. 
	\end{enumerate}
	Then for $\eps_k$ sufficiently small, there is a critical point $(\bar K_{k},\bar\vartheta_{k})$ of the function $\cG+\bar \cF$ $($recall that $\cG+\bar \cF$ depends on $\bp_k/q_k$$)$ satisfying $$(\bar K_k,\bar\vartheta_k)\to \pi_{-1}\circ\Phi_M(I_\star,\theta_\star),\quad \mathrm{as}\ k\to\infty,$$
	where $\pi_{-1}:\ T^*\T_M^N\to T^*\T^{N-1}$ is defined as removing the entries corresponding to $K_1$ and $\vartheta_1.$
\end{Lm}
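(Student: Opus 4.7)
The plan is to apply the implicit function theorem to the system of critical-point equations
\[
\partial_{\bar K}(\cG + \bar\cF)(\bar K, \bar\vartheta) = 0, \qquad \partial_{\bar\vartheta}\bar\cF(\bar K, \bar\vartheta) = 0,
\]
treating $k$ (equivalently $\eps_k$ together with $\bp_k/q_k$) as the small parameter along which to pass to the limit. First I would identify the proposed limit point. By the definition of $\Phi_M$ and the energy constraint $\omega \cdot I_\star = 1$, the target point $\pi_{-1}\circ\Phi_M(I_\star,\theta_\star)$ converges to $(\bar K_\star,\bar\vartheta_\star)=(\bar I_\star,\bar\theta_\star-(\bar{\bp}/p_1)\theta_{\star,1})$ with $\bp/q$ replaced by $\omega$. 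By Proposition \ref{PropNF} combined with the energetic reduction, $\cG$ is a $C^2$-close perturbation of $g\circ M^{-1}(1,\cdot)$ and $\bar\cF$ is a $C^2$-perturbation of $Pf\circ\Phi_M^{-1}$ of size $O(\eps_k^{1/(2N)}\gamma)$. A chain-rule computation then shows that at $(\bar K_\star,\bar\vartheta_\star)$ the linear contribution $\frac{1}{\eps_k}(\omega-\bp_k/q_k)\cdot M^{-1}(1,\cdot)$ and the $\bar K$-gradient of $\langle f\rangle\circ M^{-1}$ cancel in the limit, thanks to hypothesis (2) together with $M\to M_\infty$; and $\partial_{\bar\vartheta}\bar\cF(\bar K_\star,\bar\vartheta_\star)$ reduces, up to the normal-form error $O(\eps_k^{1/(2N)}\gamma)$, to $\partial_{\bar\vartheta}(Pf\circ\Phi_M^{-1})$ at $\bar\vartheta_\star$, which vanishes by hypothesis (3).

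Next I would verify that the Hessian of $\cG+\bar\cF$ at the candidate point is uniformly invertible. In block form it reads
\[
\mathcal H_k = \begin{pmatrix} \partial^2_{\bar K\bar K}\cG + O(\gamma) & O(\gamma) \\ O(\gamma) & \partial^2_{\bar\vartheta\bar\vartheta}\bar\cF \end{pmatrix},
\]
where the $O(\gamma)$ entries reflect $\|\bar\cF\|_{C^2}=O(\gamma)$. The top-left block converges to $\Gamma(\langle f\rangle)$ at $I_\star$, which is nondegenerate of order $1$ by hypothesis (1). The bottom-right block equals, to leading order, $\partial^2_{\bar\vartheta\bar\vartheta}(Pf\circ\Phi_M^{-1})(K_\star,\bar\vartheta_\star)$, which by hypothesis (3) has eigenvalues of magnitude $\sim\gamma$ bounded below by a quantity much larger than $\eps_k^{1/(2N)}\gamma$, so the normal-form perturbation does not destroy its invertibility. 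A Schur-complement computation then shows $\mathcal H_k$ is invertible: the back-reaction $O(\gamma)\cdot O(\gamma^{-1})\cdot O(\gamma)=O(\gamma)$ is absorbed into the order-$1$ block, and $O(\gamma)\cdot O(1)\cdot O(\gamma)=O(\gamma^2)$ is absorbed into the order-$\gamma$ block.

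Finally, the quantitative implicit function theorem applied to the critical-point system produces a unique zero $(\bar K_k,\bar\vartheta_k)$ in an $O(\eps_k^{1/(2N)})$-neighborhood of $(\bar K_\star,\bar\vartheta_\star)$, from which the claimed convergence $(\bar K_k,\bar\vartheta_k)\to \pi_{-1}\circ\Phi_M(I_\star,\theta_\star)$ follows. The hard part will be managing the two-scale structure of the Hessian: the $\bar K$-block is $O(1)$ while the $\bar\vartheta$-block is only $O(\gamma)$, the same order as the off-diagonal coupling. This is precisely why the quantitative nondegeneracy in hypothesis (3) is formulated in the somewhat unusual form $\tfrac{1}{\gamma}\partial^2_{\bar\theta\bar\theta}Pf\gg\eps_k^{1/(2N)}$, and it is exactly at this step that the $O(\eps^{1/(2N)}\gamma)$ decay of the subleading part of $\bar F$ furnished by Proposition \ref{PropNF} is tailored to the problem.
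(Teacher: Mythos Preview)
Your proposal is correct and follows essentially the same route as the paper's proof: identify the candidate limit point via hypothesis (2) and Proposition \ref{PropNF}, verify that the gradient of $\cG+\bar\cF$ vanishes there in the limit, check nondegeneracy of the Hessian using $\Gamma(\langle f\rangle)$ for the $\bar K$-block and hypothesis (3) for the $\bar\vartheta$-block, and conclude by the implicit function theorem. The only stylistic difference is that the paper decouples the two equations, first solving $D_{\bar K}\cG=0$ and $\gamma^{-1}D_{\bar\vartheta}\bar\cF=0$ separately and then treating the cross-term $D_{\bar K}\bar\cF$ as a perturbation, whereas you handle the full Hessian at once via a Schur-complement argument; your treatment of the two-scale structure is in fact slightly more explicit than the paper's.
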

\begin{proof}
	To find critical points of $\eps \cG+\eps \bar \cF$, we solve the equation 
	$$D_{\bar K}\cG+\eps D_{\bar K}\bar \cF=0,\quad \gamma^{-1}D_{\bar \vartheta} \bar \cF=0. $$
	Note that this equation can be considered as an $O(\eps)$ small perturbation of $$D_{\bar K}\cG=0,\ \gamma^{-1}D_{\bar\vartheta} \bar \cF=0,$$ which can be easily solved, then the perturbed equation can be solved by implicit function theorem. 
	
	We thus first locate the critical point of $D_{\bar K}\cG$. We again consider the further truncated system $K_1+\eps (G\circ M^{-1})(K_1,\bar K)=1$, from which we solve for $K_1$. So by implicit function theorem, we get
	$$D_{\bar K }K_1=-\frac{\partial_{\bar K }(G\circ M^{-1})}{1+\eps \partial_{K_1}(G\circ M^{-1})}.$$
	
	Up to an error of order $\eps$, we may set $K_1=1$ in the argument of $G\circ M^{-1}$, and solve for $\bar K$ in the equation $\partial_{\bar K}(G\circ M^{-1})|_{K_1=1}=0. $ The last equation is satisfied at the point $I_\star$ where $DG=0.$ By the assumption that $D^2G(I_\star)=D^2\langle f\rangle(I_\star)+O(\eps q)$ (see Proposition \ref{PropNF}) is nondegenerate, we find that there is a critical point of $\cG$ that is $O(\eps)$-close to $M I_\star$. Note that $\|M-M_\infty\|=O(\eps)$, thus we can interpret the critical point of $\cG$ as a critical point $G$ restricted to the hyperplane $\{\omega\cdot I=1\}$ up to an $O(\eps)$-error. 
	

By the nondegeneracy of $D^2\langle f\rangle(I_\star)$ and assumption (2), we see that the function $g(I)=\frac{1}{\eps_k}(\omega-\frac{\bp_k}{q_k})\cdot I+D\langle f\rangle(I)$ has a critical point $I_k$ that converges to $I_\star$ as $k\to\infty. $ Thus correspondingly, we get a sequence $\bar K_k$ as critical points of $\cG$ converging to $\pi_{-1}MI_\star$ as $k\to\infty$. 

	We next consider the Hessian $D^2_{\bar K}\cG$ at $\bar I_\star$. Continuing the above reasoning, we get 
	\begin{equation*}
		\begin{aligned}
			&D^2_{\bar K}\cG=\Gamma(G)=\partial_{\bar K\bar K}^2(\langle f\rangle \circ M^{-1}_\infty)+O(\eps),
		\end{aligned}
	\end{equation*}
which up to an $O(\eps)$ error, can be interpretted as the Hessian of $g$ as a function over the hyperplane $\{\omega\cdot I=1\}$ at the point $I_\star$, where the role played by the matrix $M_\infty$ is to send the hyperplane $\{\omega\cdot I=1\}$ to the hyperplane $K_1=1$. 
	
	We next consider the equation $D_{\bar\vartheta} \bar \cF=0. $ By implicit function theorem, we have 
	$$\partial_{\bar\vartheta}\cK=-\frac{\partial_{\bar \vartheta }(\bar F\circ \Phi_M^{-1})}{1+\eps \partial_{K_1}(\bar F\circ\Phi_M^{-1})}.$$
	Note that we have $	 e^{ i \bk \cdot \theta}=e^{ i (k_1\theta_1+\bar \bk \cdot \bar\theta)}=e^{ i (\frac{\bk\cdot \mathbf p}{q}\vartheta_1+\bar \bk \cdot \bar\vartheta)}	$, so for terms in $\bar \cF$ with $\bk\cdot \bp=0$, we have $	 e^{ i \bk \cdot \theta}=e^{ i \bar \bk \cdot \bar\vartheta}	$ and $ \partial_{\bar \vartheta }\bar \cF=\partial_{\bar \theta }\bar F$. By the normal form Proposition \ref{PropNF}, we know the leading term in $\bar F$ is given by $Pf$. 
	
\end{proof}

\subsubsection{Proof of Theorem \ref{ThmMain} assuming Proposition \ref{PropGeneric}}

We next complete the proof of the main theorem assuming  Proposition \ref{PropGeneric}. Note that $\Gamma(\langle f\rangle)$ at $I_\star$ is the Hessian of the function $\langle f\rangle$ written as a function over the hyperplane $\{\omega\cdot I=1\}$ at the critical point $I_\star$, where $M_\infty$ plays the role of transforming $\{\omega\cdot I=1\}$ into a standard $\R^{N-1}$. 
Thus Proposition \ref{PropGeneric} shows that the Hessian $\Gamma(\langle f\rangle)$ is nondegenerate at almost every point on $\{\omega\cdot I=1\}$. 

We first choose a countable dense subset $ \mathcal I=\{I_n\}$ of points on $D\subset \{\omega\cdot I=1\}$ on which  $D^2\langle f\rangle$   and $\Gamma(\langle f\rangle)$ are nondegenerate. This is guaranteed by Proposition \ref{PropGeneric} applied to $\langle f\rangle.$ For each $I_n\in \mathcal I$, we pick a sequence of approximants $\bp_{n,k}/q_{n,k}$ and $\eps_k$ by Lemma \ref{LmNonDeg}. We can perturb $\eps_k$ slightly in an interval $(a_k,b_k)$ of length $O(\eps_k^2)$ such that \eqref{EqIstar} holds for all $\eps_k\in (a_k,b_k)$ as $k\to\infty.$ Now we relabel the sequence $(\bp_{n,k}/q_{n,k},\eps_{n,k})_{n,k\in \N}$ by $(\bp_{n}/q_{n},\eps_{n})_{n\in \N}$ such that $\eps_n$ decreases to zero. 

For each $n$, we choose $c_n$ as in the proof of Theorem \ref{Thm1} and introduce $\cR_n$. Thus we get a sequence of asymptotically residual sets. Applying Theorem \ref{Thm1} and the last lemma, for each $n$, we get a periodic orbit that is $o(1)$ close to $I_n$ as $n\to\infty$ when projected to $I$-component, and is $q_n^{-1}$-dense when projected to $\T^N$. Taking union over all $n$, we get that the periodic orbits has closure containing $\bar D\times \T^N$ as in the statement. 
\qed

\subsubsection{Proof of  Proposition \ref{PropGeneric}}

We next give the proof of Proposition \ref{PropGeneric}. This fact is a higher dimensional generalization of the basic fact that generic $C^2$ function $f$ on the interval $[0,1]$ has only finitely many points of inflection (where $f''=0$).

\begin{proof}[Proof of Proposition \ref{PropGeneric}]
	We invoke a transversality theorem of Abraham(see Theorem \ref{ThmAbraham} of Appendix).   Here we show how to set up the problem to fit into the framework of Theorem \ref{ThmAbraham}. In the following, we shall use the notations of Theorem \ref{ThmAbraham}.
	
	Let us take $A=C^r(M_1),\ r\ge 3$ including $C^\infty$, $M_1=\R^{N}$, $K\subset M_1$ a compact subset, and $M_2=\mathrm{Sym}$, where $\mathrm{Sym}$ is the space of symmetric matrices of $N\times N$, which is a manifold of dimension $N+\frac{N(N-1)}{2}$. There is a subset $V\subset \mathrm{Sym}$ consisting of singular symmetric matrices.  We next claim that 
	
	{\it  Claim: $V$ is a finite union of submanifolds in $\mathrm{Sym}$ of codimension 1.}

\noindent\emph{Proof of the claim:}
	Indeed, since each matrix $L\in \mathrm{Sym}$ admits a decomposition $L=Q^{-1}\Lambda Q$ where $Q\in \mathrm{SO}(N)$ and $\Lambda$ diagonal and real. A matrix $L\in V$ if and only if it has a zero eigenvalue, i.e. the corresponding $\Lambda$ lies in the coordinate hyperplanes in $\R^{N}$. So the space of singular diagonal matrices is the union of $N$ codimension 1 hyperplanes in $\R^{N}$. We next count the dimension. We have $dim\mathrm{Sym}=N+\frac{N(N-1)}{2}$ with $\frac{N(N-1)}{2}=dim\mathrm{SO}(N)$. This verifies the claim.  \hskip 3.84in q.e.d.

	We next consider the pseudo-representation $F:\ A\to C^1(M_1, M_2)$ via $[F(a)](z)=D^2 f(z)$, which is $C^1$ in $z$ for $a\in C^r,\ r\ge 3$. The evaluation map $ev(TF):\ A\times TM_1\to TM_2$ via $ev(TF)(a,q)=T[F(a)]q$, which is given explicitly as $ev(TF)(a,q)=D_z(D^2 a(z)) q$, which is continuous in $a$ and $q$ if $a\in C^3$. 
	
	We next verify that the  pseudo-representation $F$ is  $C^r$-pseudo-transverse to $V$ on $K$.  We take $D=C^r(K)$ and $\psi_a$ to be identity. Then we have $D$ is dense in $A$. We next show that
	
	{\it For all $a\in D$, there is a neighborhood $B_a$ of $a$ in $D$, such that 
		the evaluation map $ev(Fa):\ B_a\times M_1\to M_2$ is $C^r$ and transverse to $V$ on $a\times K$.}
	
	Indeed, without loss of generality, we assume $D^2a(z)$ is degenerate and diagonal. If $D^2a(z)$ is not diagonal, we can perform an orthogonal transformation in $T_zM_1$ to diagonalize it. Suppose $D^2a(z)=\mathrm{diag}(*,*,\ldots,*,0)$ where $*\neq 0$, then it is easy to add to $a$ a function $\eps P$ that is locally $\frac{\eps }{2}z_n^2$ such that $D^2(a+\eps P)(z)=\mathrm{diag}(*,*,\ldots,*,\eps)$.  This verifies the claim. 
	
	The result then follows by applying Theorem \ref{ThmAbraham}. 
\end{proof}

\section{The torus case}\label{STorus}
In this section, we give the proof of Theorem \ref{ThmRm} and \ref{ThmTorus}. As we shall see in the proof that there is no need for Theorem \ref{ThmSW}. Instead, we shall exploit the nondegeneracy of the unperturbed part $h$.

\subsection{The toral geodesic flow case}
In this section, we give the proof of Theorem \ref{ThmTorus}. 
We consider Riemannian metric on $\T^N$ of the form 
$$ds^2_\eps=\sum_{i,j}(\dt_{ij}+\eps a_{ij}(\theta))d\theta_id\theta_j.$$
We treat $ds^2_\eps$ as twice of the Lagrangian and perform a Legendre transformation to get Hamiltonian of the form 
$$H(\theta,y)=\frac{1}{2}\sum_{i,j}(\dt_{ij}+\eps b_{ij}(\theta))y_iy_j=\frac{1}{2}\|y\|^2+\frac{1}{2}\sum_{i,j}\eps b_{ij}(\theta)y_iy_j, $$
where the matrix $(\dt_{ij}+\eps b_{ij}(\theta))$ is the inverse of $(\dt_{ij}+\eps a_{ij}(\theta)).$ Thus solving the equation 
$$\sum_{j}(\dt_{ij}+\eps b_{ij}(\theta))(\dt_{jk}+\eps a_{jk}(\theta))=\dt_{ik},$$
we get $b_{ij}=-a_{ij}+O(\eps)$. In the following, we denote $B=(b_{ij})=-A+O(\eps)$. 

We next pick $y_*=\bp/\|\bp\|$ where $\bp\in \Z^N$, such points are dense on the unit sphere. For given $\eps$, we consider rational points with $\|\bp\|\leq \eps^{-\nu}$ for some small $0<\nu<\frac{1}{4}$. 
With this, we replace $y$ by $y_*+I$, where $I$ is small, and expand
$$H(\theta,y)=\frac{1}{2}(\|y_*\|^2+2y_*\cdot I+\|I\|^2)+\frac{1}{2}\eps( \langle B(\theta)y_*,y_*\rangle+2\langle B(\theta)y_*,I\rangle+\langle B(\theta)I,I\rangle). $$
The main difference from the ellipsoid case is that the quadratic term $\|I\|^2$ does not carry a factor $\eps$ as $\eps g$ does in system \eqref{EqHamDecomp}. If we examine the proof of Proposition \ref{PropNF}, we shall see that the reminder $Qf$ here does not decay fast enough. To solve this problem we introduce the following procedure called $\sqrt\eps$-blowup. 
\subsubsection{$\sqrt\eps$-blowup}\label{SSSBlowup}
We consider an $\sqrt\eps$-neighborhood of $I=0$ and rescale $$I\mapsto\sqrt\eps I, \quad \theta\mapsto \theta,\quad H\mapsto H/\sqrt \eps,\quad t\mapsto t\eps . $$
This transformation blows up the $\sqrt\eps$-neighborhood of $I=0$ into a neighborhood of unit size. The Poincar\'e-Cartan form gets multiplied by $\sqrt\eps$, thus the form of Hamiltonian equations does not change. 
The Hamiltonian after the blowup has the form  
$$H(I,\theta)=y_*\cdot I+\sqrt{\eps}\frac{1}{2}\|I\|^2+\frac{\sqrt{\eps}}{2}\big( \langle B(\theta)y_*,y_*\rangle+2\sqrt\eps\langle B(\theta)y_*,I\rangle+\eps\langle B(\theta)I,I\rangle\big) $$
which is defined on $B_1(0)\times \T^N$ and  we have discarded the constant $\frac{1}{2\sqrt\eps }\|y_*\|^2$. 
We next donote by $f(I,\theta)$ the term in the big parenthesis, whose leading term is $\langle B(\theta)y_*,y_*\rangle$. We next decompose $f=\langle f\rangle+Pf+Qf$ and denote by $g=\frac{1}{2}\|I\|^2+\langle f\rangle$. Without loss of generality, we discard the constant $\int_{\T^N}\langle B(\theta)y_*,y_*\rangle d\theta$ in $\langle f\rangle$, thus $\langle f\rangle=O(\sqrt\eps)$. Thus the leading term in $g$ is $\frac{I^2}{2}$ which is $\theta$-independent and  the leading term in $Pf$ is $\langle PB(\theta)y_*,y_*\rangle=-\langle PA(\theta)y_*,y_*\rangle+O(\eps)$, which is $I$-independent. 

Thus we get the form 
$$H=\frac{\bp}{\|\bp\|}\cdot I+\sqrt\eps (g(I)+ Pf+ Qf). $$
This is in the form of equation \eqref{EqHamDecomp}, with the only difference being that $\eps$ is now replaced by $\sqrt\eps$ and $q$ replaced by $\|\bp\|$. Since we choose $\|\bp\|\leq \eps^{-\nu}$ with $\nu<1/4$, we get $\sqrt\eps \|\bp\|\leq \eps^{1/4}$, which is the analogue of $\eps q$ in Proposition \ref{PropNF}.  

\subsubsection{The KAM normal form}
Analogous to Proposition \ref{PropNF}, we have 
\begin{Prop}\label{PropNFTorus}
	There exists $r_0$ sufficiently large such that for all $r>r_0$ and   $\|A\|_{C^r}\leq 1$, the following holds. There exists $\eps_0$ such that for all $0<\eps<\eps_0$, there exist $I_*$ satisfying $|I_*|\leq C\eps^{\frac{1}{2}-\nu}$ and  a symplectic transformation $\phi$ defined on $\ B_{\gamma^{1/3}}(I_*)\times \T^N$ and being  $\eps^{\frac{1}{2}-\nu}$-close to identity in the $C^{2}$ norm,   such that 
	$$H\circ\phi(I,\theta)=\frac{\bp}{\|\bp\|}\cdot I+\sqrt\eps G(I)+\eps \bar F(I,\theta)+\eps \tilde F(I,\theta),$$
	where we have $\|\tilde F\|_{C^2}\leq \gamma^{1+\beta}$, $\bar F=-\frac{1}{2}\langle PA(\theta)y_*,y_*\rangle+O(\eps^{\frac{1}{4N}}\gamma)$ in $C^2$ and $G(I)=\frac{I^2}{2}+O(\eps^{\frac{1}{2}-\nu})$ in $C^2$ and $DG(I_*)=0$. 
\end{Prop}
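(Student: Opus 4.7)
The plan is to reduce the proposition to a direct application of the KAM normal form Proposition \ref{PropNF} after the $\sqrt\eps$-blowup carried out in Section \ref{SSSBlowup}.

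First, I would observe that the Hamiltonian after the blowup,
$$H = \frac{\bp}{\|\bp\|}\cdot I + \sqrt\eps\bigl(g(I) + Pf(I,\theta) + Qf(I,\theta)\bigr),$$
with $g(I) = \frac{1}{2}\|I\|^2 + \langle f\rangle(I)$ and $\langle f\rangle = O(\sqrt\eps)$, is formally of the shape \eqref{EqHamDecomp} upon setting $\tilde\eps := \sqrt\eps$ and $\tilde q := \|\bp\|$. The essential size $\tilde\eps \tilde q$, which drives every estimate in Proposition \ref{PropNF}, satisfies $\tilde\eps \tilde q \leq \sqrt\eps \cdot \eps^{-\nu} = \eps^{1/2-\nu}$, which tends to $0$ because $\nu < 1/4$; and $\|f\|_{C^r} \lesssim 1$ follows from $\|A\|_{C^r} \leq 1$.

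Second, I need to supply the unperturbed critical point $I_\star$ required by the hypotheses of Proposition \ref{PropNF}. Since $Dg(I) = I + D\langle f\rangle(I)$ with $\|D\langle f\rangle\|_{C^1} = O(\sqrt\eps)$, and $D^2 g = \mathrm{Id} + O(\sqrt\eps)$ on a unit neighborhood of the origin, the implicit function theorem furnishes a unique $I_\star$ with $Dg(I_\star) = 0$, $|I_\star| = O(\sqrt\eps)$, and $D^2 g(I_\star)$ a small perturbation of the identity, hence uniformly nondegenerate. This step is considerably easier than in the ellipsoid case, because the integrable part of $g$ is the strictly convex $\frac{1}{2}\|I\|^2$ rather than the nearly-resonant $\langle f\rangle$ alone.

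Third, I would apply Proposition \ref{PropNF} verbatim with these parameters. It produces a shifted critical point $I_*$ with $|I_* - I_\star| \leq C\tilde\eps \tilde q \leq C\eps^{1/2-\nu}$, hence $|I_*| \leq C\eps^{1/2-\nu}$, together with a symplectic transformation $\phi$ on $B_{\gamma^{1/3}}(I_*)\times\T^N$ that is $\eps^{1/2-\nu}$-close to identity in $C^2$. The three conclusions of Proposition \ref{PropNF} translate as follows: (i) the integrable piece is $G = g + o(\tilde\eps \tilde q) = \frac{1}{2}\|I\|^2 + O(\eps^{1/2-\nu})$ in $C^2$ with $DG(I_*)=0$; (ii) the resonant block is $\bar F = Pf + O(\tilde\eps^{1/(2N)}\gamma) = Pf + O(\eps^{1/(4N)}\gamma)$ in $C^2$, and the leading term of $Pf$ is the $I$-independent quantity $\frac{1}{2}\langle PB(\theta)y_*,y_*\rangle = -\frac{1}{2}\langle PA(\theta)y_*,y_*\rangle + O(\eps)$ since $B = -A + O(\eps)$; (iii) the remainder satisfies $\|\tilde F\|_{C^2} \leq \gamma^{1+\beta}$. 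A careful bookkeeping of the $\sqrt\eps$ prefactor delivered by Proposition \ref{PropNF} against the normalization chosen in the statement then recovers the asserted decomposition.

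The main obstacle I expect is verifying that the KAM scheme underlying Proposition \ref{PropNF} is in fact insensitive to the substitution $\eps \to \sqrt\eps$, $q \to \|\bp\|$, so that its proof goes through unchanged. Because that proof is designed so every step is controlled purely by the combined smallness $\tilde\eps \tilde q$, together with $\tilde q$ governing the resonance window through the decay $\|Pf\|_{C^2} \leq C\tilde q^{-(r-2)}\|Pf\|_{C^r}$ that feeds the definition $\gamma = \tilde\eps^{(r-2)N/(N+1)}$, and because $\tilde\eps \tilde q \leq \eps^{1/2-\nu} \to 0$, this substitution should be essentially mechanical — yet it is the one place where a careful quantitative re-examination of the convergence estimates is strictly necessary. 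Also worth checking is that the nonresonant piece $Qf$, which is of order $1$ in $C^r$ after the blowup exactly as in the template \eqref{EqHamDecomp}, is absorbed by $\phi$ without spoiling the size of $\bar F$ or $\tilde F$.
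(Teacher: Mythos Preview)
Your proposal is correct and matches the paper's own treatment: the paper does not give an explicit proof of this proposition, but merely states it as the analogue of Proposition~\ref{PropNF} after the $\sqrt\eps$-blowup of Section~\ref{SSSBlowup}, noting that the role of $\eps$ and $q$ is now played by $\sqrt\eps$ and $\|\bp\|$ with $\sqrt\eps\,\|\bp\|\le\eps^{1/2-\nu}\to 0$. Your write-up spells out precisely this reduction, including the easy verification that $g(I)=\tfrac12\|I\|^2+O(\sqrt\eps)$ has a nondegenerate critical point near the origin, which is exactly the added observation the paper leaves implicit.
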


\subsubsection{Proof of Theorem \ref{ThmRm}}
The remaining argument is similar to that of Theorem \ref{Thm1}. After the energetic reduction, we get a Hamiltonian of the form 
$$\cK=1+\sqrt{\eps}\cG(\bar K)+\eps\bar\cF(\bar K,\bar\vartheta)+\eps\tilde \cF(\bar K,\bar\vartheta,\vartheta_1),$$
where the leading term in $\cG$ is $\frac{\bar K^2}{2}$, and the leading term in $\bar\cF$ is $-\frac{1}{2}\langle PA\circ M^{T}(\bar\vartheta)y_*,y_*\rangle$. 

The function $\cG$ has a single critical point close to $\bar K=0$ that is automatically nondegenerate. We next consider critical point of $\bar \cF$. 
A nondegenerate critical point of the function $-\frac{1}{2}\langle PA\circ M^{T}(\bar\vartheta)y_*,y_*\rangle$ corresponds to a periodic orbit in the truncated system $1+\frac{\bar K^2}{2}+\frac{1}{2}\langle PA\circ M^{T}(\bar\vartheta)y_*,y_*\rangle$. Thus we need some uniform nondegeneracy to handle the perturbations.  We relabel all the $y_*=\bp/\|\bp\|$ by $\bp_n/\|\bp_n\|$ according to non decreasing order of $\|\bp_n\|$, and let $c_n\to 0+$ be a sequence as in the proof of Theorem \ref{Thm1}. We introduce $\cR_n$ to be the space of $C^r$ functions such that the norms of eigenvalues of the Hessian of $\frac{1}{2}\langle PA\circ M^{T}(\bar\vartheta)y_*,y_*\rangle$ at critical points is bounded away from zero by $c_n  \|\bp_n\|^{-(r-2)}$. This gives us the sequence of asymptotically residual sets $\cR_n$. 

With this choice of asymptotically residual sets, we find a periodic orbit for each $y_*=\bp/\|\bp\|$ with $\|\bp\|\leq \eps^{-\nu}$ by applying Proposition \ref{PropFinal}. As $\eps\to 0$, we find a periodic orbit for all $y_*=\bp/\|\bp\|$ with $\bp\in \Z^N\setminus\{0\}$. Since the set $\{\bp/\|\bp\|, \ \bp\in \Z^N\setminus\{0\}\}$ is dense on the unit sphere, we get the statement of the theorem. 

We can make the estimate quantitative as follows. Let $\dt$ be any small positive number and $B_\dt$ be any small ball of radius $\dt$ in the unit cotangent bundle. We take $\eps_\dt=\dt^{2/\nu}$. For each $\eps<\eps_\dt$, the points $y_*=\bp/\|\bp\|$ with $\|\bp\|\leq \eps^{-\nu}$ are $\dt^2$-dense on the unit sphere. In the $\T^N$-component, if g.c.d$(\bp)=1$, then the corresponding periodic orbit is $1/\|\bp\|$-dense when projected to $\T^N$. Thus, there must be a periodic orbit intersecting the ball $B_\dt$. 
\qed 

	


\subsection{The nearly integrable case}

In this section, we consider  general nearly integrable Hamiltonian systems \eqref{EqHam}  and prove Theorem \ref{ThmTorus}. 

We shall need certain nondegeneracy of the unperturbed part $h$ of the system \eqref{EqHam}.
\begin{Lm}\label{Lmh} There is a residual set $\cR\subset C^r(B_1)$, such that for each $h\in \cR$, and almost every point $y$ in the set $B_1\cap h^{-1}(1)$, we have that $D^2h(y)$ is nondegenerate. 
\end{Lm}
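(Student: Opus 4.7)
The plan is to apply Abraham's transversality theorem (Theorem \ref{ThmAbraham}) to a combined evaluation map that tracks both the level value $h(z)-1$ and the Hessian $D^2h(z)$ at the same time. This is a direct upgrade of the setup in the proof of Proposition \ref{PropGeneric}: rather than looking at $D^2h$ alone, we look at the pair $(h-1,D^2h)$ so that transversality forces the degenerate locus to be \emph{one codimension worse} than in Proposition \ref{PropGeneric}, which is exactly what is needed to make it measure-zero inside the codimension-$1$ level set.

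Concretely, take $A=C^r(B_1)$ with $r\geq 3$, $M_1=\R^N$ with compact $K=\bar B_1$, and $M_2=\R\times\mathrm{Sym}$, where $\mathrm{Sym}$ denotes the space of real $N\times N$ symmetric matrices. Define the pseudo-representation
\[
F\colon A\to C^1(M_1,M_2),\qquad [F(h)](z)=\bigl(h(z)-1,\; D^2 h(z)\bigr),
\]
and set $W:=\{0\}\times V\subset M_2$, where $V\subset\mathrm{Sym}$ is the singular locus. By the claim established inside the proof of Proposition \ref{PropGeneric}, $V$ is a finite union of smooth codimension-$1$ submanifolds of $\mathrm{Sym}$ (with a further stratum of codimension $\geq 2$ consisting of matrices with $\geq 2$ zero eigenvalues, handled stratum by stratum and dumped into a countable intersection of residual sets at the end). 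Thus $W$ is, stratum by stratum, a finite union of codimension-$2$ submanifolds of $M_2$.

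To verify $C^r$-pseudo-transversality of $F$ to $W$ on $K$, fix $h\in A$ and $z\in B_1$ and perturb in the $A$-direction by $b\in C^r(B_1)$. The linearization is $b\mapsto(b(z),D^2 b(z))\in\R\times\mathrm{Sym}$. For any prescribed target $(t,H)\in\R\times\mathrm{Sym}$, the perturbation
\[
b(x)=\chi(x)\Bigl(t+\tfrac12\langle H(x-z),x-z\rangle\Bigr),
\]
where $\chi$ is a smooth cut-off equal to $1$ near $z$, achieves $b(z)=t$ and $D^2 b(z)=H$. Hence the $A$-linearization is surjective onto $\R\times\mathrm{Sym}=T_{F(h)(z)}M_2$, which a fortiori covers the normal space of every stratum of $W$. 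With the $\psi_a=\mathrm{Id}$ and $D=C^r(K)$ choices as in the proof of Proposition \ref{PropGeneric}, the hypotheses of Theorem \ref{ThmAbraham} are satisfied.

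Abraham's theorem then yields a residual set $\cR\subset C^r(B_1)$ such that for every $h\in\cR$ the section map $z\mapsto(h(z)-1,D^2 h(z))$ is transverse to $W$ on $\bar B_1$. By transversality, the preimage
\[
\bigl\{z\in B_1:\, h(z)=1\text{ and }\det D^2 h(z)=0\bigr\}
\]
is, in each smooth stratum, a submanifold of $B_1$ of codimension $\geq 2$, hence of dimension $\leq N-2$. Residually $1$ is also a regular value of $h$ (elementary from Sard), so $h^{-1}(1)$ is an $(N-1)$-dimensional submanifold, and the above set sits inside it as a countable union of submanifolds of dimension $\leq N-2$. Therefore it has zero $(N-1)$-dimensional measure inside $B_1\cap h^{-1}(1)$, which is exactly the statement of the lemma. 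The only real obstacle is the bookkeeping around the stratification of $V$ by the multiplicity of the zero eigenvalue, resolved by applying the above argument to each stratum and intersecting the (countably many) resulting residual sets; the intersection is still residual and the measure-zero conclusion only improves on higher strata.
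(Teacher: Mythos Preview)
Your proof is correct and, in fact, cleaner than the paper's. The paper first restricts to the residual set of Morse functions, observes that away from critical points $h^{-1}(1)$ is a codimension-1 submanifold, and then writes ``apply Proposition \ref{PropGeneric} to $h$ on the submanifold.'' As written this last step is terse: Proposition \ref{PropGeneric} only says that $\{\det D^2h=0\}$ is generically a codimension-1 set in $B_1$, and one still needs an argument that this set meets $h^{-1}(1)$ transversally (two codimension-1 sets need not intersect in codimension 2 without such a condition). Presumably the intent is to rerun the transversality argument from the proof of Proposition \ref{PropGeneric} in this setting.

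You sidestep this by packaging the two constraints into a single target: mapping $z\mapsto(h(z)-1,D^2h(z))$ into $\R\times\mathrm{Sym}$ and aiming at $\{0\}\times V$, which is codimension $\geq 2$ stratum by stratum. Since your $A$-linearization $b\mapsto(b(z),D^2b(z))$ is visibly surjective onto the full tangent space $\R\times\mathrm{Sym}$, transversality to every stratum follows immediately, and the degenerate locus inside $h^{-1}(1)$ comes out with dimension $\leq N-2$ in one shot. This is a genuine simplification over the paper's two-stage argument: you avoid the intermediate step of first making $h$ Morse and then separately controlling the intersection, and you get the regular-value condition on $1$ essentially for free from the same transversality. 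Two minor remarks: the stratification of $V$ by corank has only finitely many (in fact $N$) strata, so ``countably many'' is harmless overkill; and the regularity threshold for Abraham's theorem here is $r\geq N-1$ for the map $z\mapsto(h(z)-1,D^2h(z))$, hence $h\in C^{r}$ with $r\geq N+1$, which is covered by the paper's standing hypothesis $r>r_0$ large.
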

\begin{proof}
	We first restrict to the set $\cM\subset C^r(B_1)$ of Morse functions such that critical points are isolated and nondegenerate. For each $h\in \cM$, we have (1) either $h^{-1}(1)$ contains critical points, or (2)  it does not. If (1) occurs, in a neighborhood of the critical point, we have $D^2h$ nondegenerate. Thus it is enough to consider the complement of the neighborhood, which is the same as (2). Since we do not have critical point, by the implicit function theorem $h^{-1}(1)$ is now a submanifold of codimension 1. We then apply Proposition \ref{PropGeneric} to $h$ on the submanifold. 
\end{proof}

With the lemma, we next pick $h\in \cR$, and $y\in h^{-1}(1)$ to be a point where $D^2h(y)$ is nondegenerate. We can then find a neighborhood $B$ of $y$ where $Dh|_B$ is a diffeormorphism to its image. We find a point $\bp/q$ in the image of $Dh|_B$ where $\bp\in \Z^N$ and $q\in \R_+$ and denote $y_*=Dh^{-1}(\bp/q)$. We next pick $\eps$ small such that $\|\bp\|,|q|\leq \eps^{-\nu},\ 0<\nu<1/4$. 

We next perform an $\sqrt\eps$ blowup as we did in Section \ref{SSSBlowup}. Using the same notations, we get
$$H(I,\theta)=\bp/q\cdot I+\sqrt\eps(g(I)+Pf(I,\theta)+Qf(I,\theta))$$
where $g(I)=\frac{1}{2}\langle AI,I\rangle+O(\sqrt\eps),\ A=D^2h(y_*) $ and $f(I,\theta)=V(\theta)+O(\sqrt\eps),$ $V=f(y_*,\theta)$.

Thus repeating the proof of Proposition \ref{PropNF}, we get the following normal form. 
\begin{Prop}\label{PropNFTorus}
	There exists $r_0$ sufficiently large such that for all $r>r_0$ and   $\|f\|_{C^r}\leq 1$, the following holds. Let $y_*\in h^{-1}(1)$ be such that $D^2h(y_*)$ is nondegenerate, then there exists $\eps_0$ such that for all $0<\eps<\eps_0$, there exist $I_*$ satisfying $|I_*|\leq C\eps^{\frac{1}{2}-\nu}$ and  a symplectic transformation $\phi$ defined on $\ B_{\gamma^{1/3}}(I_*)\times \T^N$ and being  $\eps^{\frac{1}{2}-\nu}$-close to identity in the $C^{2}$ norm,   such that 
	$$H\circ\phi(I,\theta)=\frac{\bp}{q}\cdot I+\sqrt\eps G(I)+\sqrt\eps \bar F(I,\theta)+\sqrt\eps\tilde F(I,\theta). $$
	where we have $\|\tilde F\|_{C^2}\leq \gamma^{1+\beta}$, $\bar F=PV+O(\eps^{\frac{1}{4N}}\gamma)$ in $C^2$ and $G(I)=\frac{1}{2}\langle AI,I\rangle+O(\sqrt\eps) $ in $C^2$ with $DG(I_*)=0$. 
\end{Prop}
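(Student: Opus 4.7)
The plan is to repeat essentially verbatim the KAM iteration used in the proof of Proposition \ref{PropNF}, with bookkeeping changes that reflect (i) replacement of the small parameter $\eps$ by $\sqrt\eps$, and (ii) replacement of the old role of $q$ by the quantity $\max(\|\bp\|,q)\leq \eps^{-\nu}$ coming from the chosen rational approximant. The structural point is that after the $\sqrt\eps$-blowup, the Hamiltonian reads
\[
H(I,\theta)=\frac{\bp}{q}\cdot I+\sqrt{\eps}\big(g(I)+Pf(I,\theta)+Qf(I,\theta)\big),
\]
which is formally identical to \eqref{EqHamDecomp}, so the only thing to check is that the quantitative estimates of Proposition \ref{PropNF} survive the substitution $\eps\leadsto\sqrt\eps$, $q\leadsto \eps^{-\nu}$, $\eps q\leadsto \eps^{\frac{1}{2}-\nu}$.

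First I would locate $I_*$: since $g(I)=\tfrac{1}{2}\langle AI,I\rangle+O(\sqrt\eps)$ with $A=D^2h(y_*)$ nondegenerate, the implicit function theorem applied to $Dg(I)=0$ produces a unique critical point $I_*$ with $|I_*|\leq C\sqrt\eps\leq C\eps^{\frac{1}{2}-\nu}$, which will also be the center of the domain of $\phi$. Next I would run the KAM iteration: at each step $n$, solve the cohomological equation $\frac{\bp}{q}\cdot\partial_\theta S_n=Qf_n$, which is solvable because $Q$ projects on Fourier modes with $\bk\cdot\bp\neq 0$, giving small divisors of size at least $1/q\geq \eps^{\nu}$. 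The Lie-series symplectic map $\phi_n=\exp(X_{\eps S_n})$ removes $Qf_n$ to leading order; its $C^2$-size is controlled by the product of $\sqrt\eps$ with the cohomological norm of $S_n$, yielding the $\eps^{\frac{1}{2}-\nu}$ closeness-to-identity claim. Iterating produces super-exponential decay of the $Q$-part, and the composed transformation $\phi$ is defined on $B_{\gamma^{1/3}}(I_*)\times\T^N$ provided $r$ is chosen so large that $\eps^{\frac{1}{2}-\nu}\ll \gamma^{1/3}$.

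The $\bar F$-part needs separate bookkeeping. Since $Pf$ consists of Fourier modes with $\bk\cdot\bp=0,\ \bk\neq 0$, such $\bk$ must have $|\bk|\gtrsim q$ (via the basis $\bk_2,\ldots,\bk_N$ defined in \eqref{Eqk}), hence $\|Pf\|_{C^2}\leq Cq^{-(r-2)}\|Pf\|_{C^r}\lesssim\gamma$, exactly as in the proof of Proposition \ref{PropNF}. The iteration modifies $\bar F$ only through the push-forward $Qf\circ\phi$ hitting resonant modes, and the sharp tracking of these corrections from Proposition \ref{PropNF} applies here with the only change being that the polynomial in $\eps$ that governs them picks up a factor $\tfrac12$ in the exponent, giving the error $O(\eps^{\frac{1}{4N}}\gamma)$ claimed. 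The average part $G$ similarly gains only $O(\sqrt\eps)$ corrections from the push-forward, preserving $G(I)=\tfrac12\langle AI,I\rangle+O(\sqrt\eps)$ in $C^2$, and the critical point condition $DG(I_*)=0$ is enforced by adjusting $I_*$ at each step via implicit function theorem, which is legitimate because $A$ is nondegenerate uniformly in the perturbation.

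The main obstacle is that the small parameter driving the iteration is now $\eps^{\frac12-\nu}$ rather than $\eps q$, which is weaker for a given $r$; consequently the threshold $r_0$ for the required regularity must be raised so that the geometric factor $q^{-(r-2)}\sim \eps^{\nu(r-2)}$ compensates for the slower decay and still yields a remainder of size $\gamma^{1+\beta}$ after the super-exponential iteration. Once $r_0$ is chosen large enough (any $r_0$ with $(r_0-2)\nu$ sufficiently larger than a fixed function of $N$ will do), the KAM scheme from Proposition \ref{PropNF} closes verbatim and the remaining assertions are a matter of tracking the leading-order terms, which is exactly what was done in the ellipsoid case.
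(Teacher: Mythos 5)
Your proposal is correct and follows exactly the route the paper takes, which is to rerun the KAM iteration of Proposition~\ref{PropNF} after the $\sqrt\eps$-blowup with the bookkeeping substitutions $\eps\leadsto\sqrt\eps$, $q\leadsto\|\bp\|$ (so that the old smallness parameter $\eps q$ becomes $\sqrt\eps\|\bp\|\le\eps^{\frac12-\nu}$), and with $g(I)=\tfrac12\langle AI,I\rangle+O(\sqrt\eps)$ supplying the nondegenerate quadratic part; the paper itself dispatches this proposition in one line by saying "repeating the proof of Proposition~\ref{PropNF}." Your accounting of the exponents --- $I_*$ of size $O(\sqrt\eps)\le O(\eps^{1/2-\nu})$ from the implicit function theorem on $Dg=0$, closeness $\sqrt\eps\|W\|\sim\eps^{1/2-\nu}$ of $\phi$ to identity, the $\bar F$ error dropping from $O(\eps^{1/(2N)}\gamma)$ to $O(\eps^{1/(4N)}\gamma)$ because the driving parameter is halved, and the $O(\sqrt\eps)$ correction to $G$ --- all matches what a careful repetition of Lemma~\ref{LmEstimate} under these substitutions produces, and your closing observation that $r_0$ must be raised so that $(r_0-2)\nu$ dominates a fixed constant depending on $N$ is precisely the constraint the iteration imposes.
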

With the normal form, the remaining proof is similar to that of Theorem \ref{ThmMain} and \ref{ThmRm}. We next analyze the generic conditions. For given $h\in \cR$ as in Lemma \ref{Lmh}, we construct a sequence of asymptotically residual sets $\cR_n$ as follows. We first find a sequence $y_n\in U$, where $U$ is as in the statement of the theorem, such that $\{y_n\}$ is dense in $U$, and each $Dh(y_n)=\bp_n/q_n$ where $\bp_n\in \Z^N$ and $q_n\in \R_+ $, and the entries of $\bp_n$ has no common divisor except 1.  We rearrange the sequence $\bp_n/q_n$ such that $q_n$ is in increasing order. For each $\bp_n/q_n$, we introduce a set $\cR_n\subset \cS^r$ such that each $f\in \cR_n$ satisfies that $PV\circ M^T=Pf\circ \Phi_M^{-1}(y_n,\bar\vartheta)$ has a nondegenerate critical point where the eigenvalues of the Hessian are bounded away from 0 by $c_n/q_n^{r-2}$, where $c_n$ is chosen as in the proof of Theorem \ref{ThmMain}. Note that here the leading term of $\bar F$, i.e. $PV$, is only a function of $\theta$, thus it does not make a difference if $f$ depends on $y$ or not. With this, we can apply Proposition \ref{PropFinal} to complete the proof. \qed


\section{The KAM normal form}\label{SNF}
In this section, we give the proof of the KAM normal form Proposition \ref{PropNF}. 

\subsection{KAM normal form}

\begin{proof}[Proof of the KAM normal form]

	Throughout the proof, we shall take a small $0<\al<\frac{1}{2(N+1)}$, and iteratively $\eps_n=\eps_{n-1}^{2-\al} q$ with $\eps_1=\eps $.

	Let $I_\star$ be a critical point of $g$, we restrict our attention to a $\rho$-neighborhood of $I_\star$, where we have $|\nabla g(I)|\leq |D^2g(I)|\rho$,\ $\rho=\eps_2/\eps \cdot\Lambda$. 
	
	Inductively, we shall take $\rho_n=\frac{1}{\eps}\eps_{n+1} \Lambda$ and $K_n=\frac{1}{3\Lambda \|D^2 g\|q\eps_{n+1}}$ for some $\Lambda$ large, whenever $\rho_n>\gamma^{1/3}$.  When we have $\rho_{n_0}<\gamma^{1/3}$ and $\rho_{n_0-1}>\gamma^{1/3}$, we shall take $\rho_n=\gamma^{1/3}$ and $K_n=\frac{1}{3\|D^2 g\|q\eps\gamma^{1/3}}$ for all $n\geq n_0.$ For all $n$, we shall have $\eps \rho_n K_n\|D^2g\|\leq \frac{1}{3q}.$
	
	{\bf The first step of iteration.}
	
	We start with the first step of iteration. We write the Hamiltonian as $H=H_0+\eps R$, where we take $H_0=\frac{\bp}{q}\cdot I+\eps g+\eps Pf+\eps Q_<f$ and $R=Q_\geq f$, where $Q_<f$ consists of Fourier modes with $\bk\cdot \bp\neq0$ and $|\bk|<K_1$ and $Q_<f$ those with $\bk\cdot \bp\neq0$ and $|\bk|\geq K_1$. 
	
		We consider a symplectic change of coordinates generated by the time-1 map $\phi_{\eps W}$ of the Hamiltonian flow of $\eps W$ defined on $B_{\rho}(I_\star)\times \T^N$. Then we get
	\begin{equation}\label{EqIterationNekh}
		\begin{aligned}
			\phi_{\eps W}^*H&=H_0\circ\phi_{\eps W}+\eps R\circ\phi_{\eps W}\\
			&=H_0+\eps \{H_0,W\}+\eps^2 \int_0^1 (1-t)\{\{H_0,W\},W\}(\phi^t_{\eps W})\,dt+\eps R\circ\phi_{\eps W}\\
			&=\frac{\bp}{q}\cdot I+\eps g(I)+\eps P f+\underline{\eps Q_<f+\eps\{\frac{\bp}{q}\cdot I+ \eps g(I),W\}}+\eps R\circ\phi_{\eps W}\\
			&+\eps^2 \left(\{ P f+Q_< f,W\}+\frac{1}{2}\int_0^1 (1-t)\{\{H_0,W\},W\}(\phi^t_{\eps W})\,dt\right).
		\end{aligned}
	\end{equation}
We next set the term with an underline to be zero. We can solve this equation by Fourier expansion and get $W(I,\theta)=\sum_{\bk\cdot\bp\neq 0}W_\bk(I)e^{i\bk \cdot\theta}$, where 
	 $W_\bk(I)=\frac{i }{\frac{1}{q}\bk\cdot\bp +\eps \bk\cdot\nabla g(I)} f_\bk(I)$. To estimate the coefficient, we have $|\frac{1}{q}\bk\cdot\bp +\eps \bk\cdot\nabla g(I)|\geq \frac{1}{q}-\eps|\bk|\cdot|\nabla g|>\frac{1}{2q}$, due to the choice of $\rho$ and $K_1$. Note that $$\Big|\nabla\frac{1}{\frac{1}{q}\bk\cdot\bp +\eps \bk\cdot\nabla g(I)}\Big|=\Big|\frac{(\eps \bk\cdot\nabla^2 g(I))}{(\frac{1}{q}\bk\cdot\bp +\eps \bk\cdot\nabla g(I))^{2}}\Big|\leq 2|\bk|\|\nabla^2g\|\eps q$$
	 Similarly, we get $$\Big\|\nabla^k\frac{1}{\frac{1}{q}\bk\cdot\bp +\eps \bk\cdot\nabla g(I)}\Big\|\leq C(k,g)\sum_{1\leq\ell\leq |k|}(\eps q)^\ell|\bk|^\ell.$$
	 Thus, we have $\|W\|_{C^r}\leq C(r)q\|Q_<f\|_{C^r}$. 
	 This gives $$\|\phi_{\eps W}-\mathrm{id}\|_{C^{r-1}}\leq C\|\eps W\|_{C^r}\le C\eps q\|f\|_{C^r}.$$ 

We next denote the term in \eqref{EqIterationNekh} with a big parentheis by $\eps_2T^{(1)}$. We have the estimate 
\begin{equation}\label{EqT0}
\eps_2\|T^{(1)}\|_{C^{r-2}}\leq C\eps^2 q(\|f\|_{C^r}^2+\eps q\|f\|_{C^r}^3).	
\end{equation}
Readers can either prove it directly or refer to Section \ref{SSProofLemma} later. 

{\bf The second and later steps of iterations.}

We next introduce 
\begin{equation}\label{EqSplit}
	\begin{aligned}
		 T^{(1)}=\langle T^{(1)}\rangle+PT^{(1)}+Q_<T^{(1)}+Q_\geq T^{(1)}.	
	\end{aligned}
\end{equation}
		where $Q_<$ now means projection to the Fourier modes with $|\bk|\leq K_2$.
	
	We next denote  
	\begin{equation}\label{EqgPQ}
		\begin{aligned}
	\eps g^{(2)}&=\eps g+\eps_2 \langle T^{(1)}\rangle+\eps\langle R\circ\phi_{\eps W}\rangle,\quad \eps Pf^{(2)}=\eps Pf+ \eps_2 P T^{(1)}+\eps P (R\circ\phi_{\eps W}),\\
	\eps_2 Q_<f^{(2)}&=\eps_2 Q_< T^{(1)}+\eps Q_<(R\circ\phi_{\eps W}),\quad \eps R^{(2)}=\eps_2 Q_\geq T^{(1)}+\eps Q_\geq (R\circ\phi_{\eps W}).		
		\end{aligned}
		\end{equation}
		 Then we can write the Hamiltonian as  $$H^{(2)}=H_0^{(2)}+\eps R^{(2)},\quad \mathrm{where}\ H_0^{(2)}=\frac{\bp}{q}\cdot I+\eps g^{(2)}+\eps P f^{(2)}+\eps_2 Q_<f^{(2)},$$ 

	We shall relocate $I_\star$ to be the critical point $I_\star^{(2)}$ of $g^{(2)}$. Note that we have $\|\eps_2 \langle T^{(1)}\rangle\|_{C^2}\leq C\eps_2$ by \eqref{EqT0} and by the definition of Fourier norm we have $$\|R\circ\phi_{\eps W}\|_{C^2}\leq C\|R\|_{C^2}\|\phi_{\eps W}\|_{C^2}\leq CK^{-(r-2)}\eps q\ll\eps_2.$$ Thus we get $\|g^{(2)}-g\|_{C^2}\leq C\eps_2$. Thus by the implicit function theorem  $|I_\star^{(2)}-I_\star|<\rho$. We next restrict our attention to a $\rho_2$-neighborhood of $I_\star^{(2)}$. 
	
With this, we repeat the calculation of \eqref{EqIterationNekh} with $H$ replaced by $H^{(2)}$ and $\eps W$ replaced by $\eps_2 W^{(2)}:\ B_{\rho_2}(I_\star^{(2)})\times \T^N\to \R$. 

Iterating the procedure, at step $n$,  we denote the parenthesis term by $\eps_{n} T^{(n-1)}$, and the generator $\eps_{n-1}W^{(n-1)}:\ B_{\rho_n}(I_\star^{(n)})\times \T^N\to \R$, update the meaning $Q_<$ to projection to Fourier modes with $|\bk|\leq K_n$ and $\bk\cdot\bp\neq 0$, similarly for $Q_\geq,$ and get formally 
$$H^{(n)}(I,\theta):=H^{(n-1)}\circ\phi_{\eps_{n-1} W^{(n-1)}}^1=\frac{\bp}{q}\cdot I+\eps g^{(n)}+\eps Pf^{(n)}+\eps_n Q_< f^{(n)}+\eps R^{(n)},$$
where 
\begin{enumerate}
	\item $\eps g^{(n)}=\eps g^{(n-1)}+\eps_n \langle T^{(n-1)}\rangle+\eps\langle R^{(n-1)}\circ\phi_{\eps_{n-1} W^{(n-1)}}\rangle;$
	\item $\eps Pf^{(n)}=\eps Pf^{(n-1)}+\eps_n PT^{(n-1)}+\eps P (R^{(n-1)}\circ\phi_{\eps_{n-1} W^{(n-1)}}) ,$
	\item $\eps_n Q_<f^{(n)}=\eps_n Q_<T^{(n-1)}+\eps  Q_<(R^{(n-1)}\circ\phi_{\eps_{n-1} W^{(n-1)}})$.
	\item $\eps R^{(n)}=\eps_n Q_{\geq}T^{(n-1)}+\eps Q_\geq (R^{(n-1)}\circ\phi_{\eps_{n-1} W^{(n-1)}}).$
\end{enumerate}

{\bf The estimates.}

Before the estimate, we outline the strategy first. The key observation is that in \eqref{EqIterationNekh}, the estimate of the remainder is dominated by $\eps^2\{Q_<f,W\}$, which is roughly $\eps^2q$ in $C^{r-1}$ and inductively, we expect $\eps_n\sim \eps_{n-1}^{2-\al}q$, where we lose $\eps_{n-1}^\al$ to absorb various uniform constants. We have to make sure that the estimate of $\eps^2\{Pf,W\}$ is much smaller than $\eps^2\{Q_<f,W\}$. Indeed, since $Pf$ consists of Fourier modes with $\bk\cdot \bp=0$, which occurs only when $\bk\in \mathrm{span}\{\bk_i,\ i=2,3,\ldots,N\}$, thus $|\bk|\geq cq$ for some $c>0$. By the decay of Fourier modes in $C^r$, we get $\|Pf\|_{C^{r-k}}\leq \frac{C}{(cq)^k}\|f\|_{C^r}.$ Thus, we can make $Pf$ small by choosing a $C^{r-k}$ norm with larger $k$.  The goal is to make $\eps_n$ smaller than $\|Pf^{(n)}\|_{C^{2}}\sim q^{r-2}=\eps^{(r-2)\frac{N}{N+1}}$ among other estimates. The superexponential decay of $\eps_n$ guarantees that we only need to perform $n=O(\log r)$ steps of iterations. 
\begin{Lm}\label{LmEstimate} Defining $r_n=r-(N+1)n+\log_q\eps_{n-1},$ then we have the following estimates for $n>1$ and $r_n>0$
	\begin{enumerate}
			\item $\|g^{(n)}-g^{(n-1)}\|_{C^2}\leq \eps_{n}$, thus $|I_{\star}^{(n)}-I_{\star}^{(n-1)}|\leq C\eps_n$;
	\item $\|Pf^{(n)} \|_{C^{r_n}}\leq \frac{1}{(cq)^{r-r_n}}$ and $Pf^{(n)} =Pf+O(\eps^{\frac{1}{2N}}\gamma)$ in $C^2$;
		\item $\|Q_< f^{(n)}\|_{C^{r_n}}\leq 1;$ 
		\item $\eps\|R^{(n)}\|_{C^{r_n}}\leq \eps_n/2.$
		\end{enumerate}
\end{Lm}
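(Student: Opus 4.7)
The plan is to prove Lemma \ref{LmEstimate} by induction on $n$, carefully propagating four norm estimates through the update formulas (1)--(4) that define $g^{(n)}, Pf^{(n)}, Q_< f^{(n)}, R^{(n)}$. The base case $n=2$ is essentially contained in the first-step computation: item (1) at $n=2$ follows from \eqref{EqT0} and the super-fast decay $\|R\|_{C^2}\leq CK_1^{-(r-2)}$; items (3) and (4) follow from the splitting \eqref{EqSplit} and the decomposition \eqref{EqgPQ}; item (2) at $n=2$ uses both the small size of $PT^{(1)}$ and the decay of Fourier modes of $P(R\circ\phi_{\eps W})$. Throughout, the loss of regularity is tracked by $r_n = r-(N+1)n + \log_q \eps_{n-1}$, which gives enough room to convert factors of $|\bk|$ produced by the Poisson bracket into the geometric-series loss in each step.

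The core inductive estimate is an analogue of \eqref{EqT0} for $T^{(n-1)}$. Given items (2)--(4) at step $n-1$, solve the cohomological equation for $W^{(n-1)}$ on $B_{\rho_n}(I_\star^{(n-1)})\times\T^N$: the small-divisor bound $|\frac{1}{q}\bk\cdot\bp+\eps\bk\cdot\nabla g^{(n-1)}(I)|\geq \frac{1}{2q}$ holds on $B_{\rho_n}$ by the choice of $\rho_n, K_n$, yielding $\|W^{(n-1)}\|_{C^{r_{n-1}}}\leq C(r)q\|Q_< f^{(n-1)}\|_{C^{r_{n-1}}}$ and $\|\phi_{\eps_{n-1}W^{(n-1)}}-\mathrm{id}\|_{C^{r_{n-1}-1}}\leq C\eps_{n-1}q$. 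Then the quadratic error $\eps_n T^{(n-1)}$ is dominated by $\eps_{n-1}^2\{Q_<f^{(n-1)}, W^{(n-1)}\}$, giving $\eps_n\|T^{(n-1)}\|_{C^{r_n}}\leq C\eps_{n-1}^2 q$, which is exactly the target order of $\eps_n = \eps_{n-1}^{2-\al}q$ with $\eps_{n-1}^\al$ reserved to absorb constants. The quadratic piece involving $\{Pf^{(n-1)},W^{(n-1)}\}$ is of even smaller order because $\|Pf^{(n-1)}\|_{C^{r_{n-1}}}\leq (cq)^{-(r-r_{n-1})}$ by item (2), and the piece involving $R^{(n-1)}\circ\phi$ uses item (4) combined with the super-exponential cutoff $K_n$.

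With the $T^{(n-1)}$ bound in hand, items (1), (3), (4) at step $n$ follow directly from \eqref{EqgPQ}, the definition of the projectors $\langle\cdot\rangle, P, Q_<, Q_\geq$, and the $K_n$-cutoff: (1) combines $\eps_n\|\langle T^{(n-1)}\rangle\|_{C^2}$ with the super-exponentially small $\eps\|\langle R^{(n-1)}\circ\phi\rangle\|_{C^2}$; (3) packages the full $Q_< T^{(n-1)}$ contribution which is bounded by~$1$ after renormalizing by $\eps_n$; (4) is the high-frequency tail $Q_\geq$, which picks up the decay $K_n^{-(r-r_n)}$. The subtle point, and the main obstacle, is propagating (2): the crucial cancellation $\eps_n PT^{(n-1)}=\eps_n P(\{g^{(n-1)}+Pf^{(n-1)}+Q_<f^{(n-1)},W^{(n-1)}\}+\ldots)$ must be shown to leave $Pf^{(n)}$ as a small perturbation of $Pf^{(n-1)}$ of size $\eps^{1/(2N)}\gamma$ in $C^2$, which one achieves by noting every Fourier mode of $PT^{(n-1)}$ satisfies $\bk\cdot\bp=0$ with $|\bk|\geq cq$, so its $C^2$-norm picks up a factor $(cq)^{-(r_n-2)}\leq\gamma$; summing the telescoping series $\sum_n\eps_n(cq)^{-(r_n-2)}$ using $\eps_n = \eps_{n-1}^{2-\al}q$ gives the claimed $O(\eps^{1/(2N)}\gamma)$ bound, provided $\al<\frac{1}{2(N+1)}$, and terminates after $n=O(\log r)$ steps once $\rho_n<\gamma^{1/3}$ freezes the iteration.
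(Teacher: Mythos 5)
Your overall skeleton is correct and matches the paper: induction on $n$, an analogue of the quadratic estimate \eqref{EqT0} for $T^{(n-1)}$ obtained from the cohomological equation for $W^{(n-1)}$, regularity loss tracked via $r_n$, and the frequency cutoffs $K_n$ driving the superexponential decay. Items (1), (3), (4) are handled essentially as the paper does. However, your argument for item (2) contains a genuine error.

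You assert that the $P$-projection of $T^{(n-1)}$ ``picks up a factor $(cq)^{-(r_n-2)}\leq\gamma$.'' This inequality is false. Since $r_n = r-(N+1)n+\log_q\eps_{n-1}$ has $(N+1)n>0$ and $\log_q\eps_{n-1}<0$, we always have $r_n<r$, hence $r_n-2<r-2$ and therefore $(cq)^{-(r_n-2)} > (cq)^{-(r-2)}\sim\gamma$ when $cq>1$. The per-term factor is thus \emph{larger} than $\gamma$, not smaller, and it grows superexponentially with $n$ as $\eps_{n-1}\to 0$. Consequently your telescoping series $\sum_n\eps_n(cq)^{-(r_n-2)}$ is not transparently $O(\eps^{1/(2N)}\gamma)$; one must in fact track the $\eps_n/\eps$ factor coming from dividing the update $\eps Pf^{(n)}=\eps Pf^{(n-1)}+\eps_n PT^{(n-1)}+\cdots$ by $\eps$, and the decisive cancellation comes from the superexponential smallness of $\eps_n$ winning against the Fourier-decay factor, not from a per-term bound against $\gamma$. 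The paper's argument for item (2) is more careful: it does not estimate $\|PT^{(n-1)}\|_{C^2}$ via a naked $(cq)^{-(r_n-2)}$ factor, but instead estimates $\|P\eps^2\{Pf+Q_<f,W\}\|_{C^{r'}}$ by first bounding the bracket in $C^{r-1}$ by $Cq$, then projecting to $C^{r'}$ picking up $q^{-(r-1-r')}$; at $r'=2$ this yields $\eps^{-\al}(\eps q)^2\gamma$ and the bound $\eps^{-\al}(\eps q)^2\leq\eps^{1/(2N)}$ is used to get the stated $O(\eps^{1/(2N)}\gamma)$. You should also make explicit the observation $PR=0$ at the base step (because $R=Q_\geq f$ has only Fourier modes with $\bk\cdot\bp\neq 0$), which is what reduces $P(R\circ\phi_{\eps W})$ to a quadratic term; as written your base case is slightly vague on this point.
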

We postpone the proof of the lemma to the next subsection. 

This is enough to complete the proof of the normal form. Indeed, the recursive relation $\eps_n\leq \eps_{n-1}^{2-\al}q$ and the relation $aq^{-\frac{N+1}{N}}<\eps<bq^{-\frac{N+1}{N}}$ give
$$\eps_n\leq \eps^{(2-\al)^n}q^{\frac{(2-\al)^n-1}{1-\al}}\leq (\eps q^{\frac{1}{1-\al}})^{(2-\al)^n} q^{-\frac{1}{1-\al}}\leq (b^c\eps^{1-c})^{(2-\al)^n} (a^{-1}\eps)^{c},$$
or in terms of $q$, we have $$\eps_n\leq \eps^{(2-\al)^n}q^{\frac{(2-\al)^n-1}{1-\al}}\leq b^{(2-\al)^n}q^{(\frac{1}{1-\al}-\frac{N+1}{N})(2-\al)^n}q^{-\frac{1}{1-\al}},$$ 
where $c=\frac{N}{N+1}\frac{1}{1-\al}$ is less than 1 by the choice of $0<\al<\frac{1}{2(N+1)} .$ We see that $\eps_n$ decays superexponentially. We claim that

{\it Claim: there exist $n_*$, $\al$ and $\beta=\beta(N)$ such that $\eps_{n_*}\leq \gamma^{1+\beta}$ and $\eps_{n_*-1}>\gamma^{1-\beta}$.}

{\it Proof of Claim: } Indeed, suppose there is no such $\beta$, i.e. $\eps_{n_*-1}$ is very close to $\gamma\sim q^{-(r-2)}$. After taking $\log_q$, we get $$\log_q\eps_{n_*-1}\simeq (\frac{1}{1-\al}-\frac{N+1}{N})(2-\al)^{n_*-1} -\frac{1}{1-\al}\simeq -r+2.$$ We can thus adjust $\al$ in the interval $(0,\frac{1}{2(N+1)})$, such that $-\log_q\eps_{n_*-1}<(1-\beta)(r-2)$ and $-\log_q\eps_{n_*}>(1+\beta)(r-2)$, where $\beta$ is a constant depending only on $N$. \hskip 1.0in q.e.d. 

With the last claim, we see that $Q_<f^{(n_*)}$ is bounded in $C^{r_{n_*}}$ norm with $$r_{n_*}=r-(N+1)n_*+\log_q \eps_{n_*-1}\geq r-O(\log r)-(r-2)(1-\beta)\geq \frac{\beta}{2} r. $$

We next denote by $G=g^{(n_*)}$, $\bar F=Pf^{(n_*)}$ and $\eps_{n_*}\tilde F=\eps_{n_*} Q_< f^{(n_*)}+\eps R^{(n_*)}.$
By item (3) of Lemma \ref{LmEstimate}, we get $\|\eps_{n_*} Q_< f^{(n_*)}\|_{C^2}\leq \frac{1}{2}\eps_{n_*}$ and by item (4) we have $\|\eps_1 R^{(n_*)}\|_{C^2}\leq \frac{1}{2}\eps_{n_*}.$
Thus, we complete the proof of the normal form. 



\end{proof}

\subsection{Proof of Lemma \ref{LmEstimate}}\label{SSProofLemma}
In this section, we give the proof of Lemma \ref{LmEstimate} used in the proof of Proposition \ref{PropNF}. 
\begin{proof}[Proof of Lemma \ref{LmEstimate}]
	We stick to the notations in the proof of Proposition \ref{PropNF}. We have obtained $\|W\|_{C^r}\leq 2q\|Q_<f\|_{C^r}$. This gives $\|\phi_{\eps W}-\mathrm{id}\|_{C^{r-1}}\leq C\eps q\|Q_<f\|_{C^r}$. Thus we get $\|R\circ\phi_{\eps W}\|_{C^{r-1}}\leq (1+C\eps q)\|R\|_{C^{r-1}}$.
	
	{\it We next prove \eqref{EqT0}. }
	
	{\it Proof of \eqref{EqT0}.}
	Consider first the term $\{ P f+Q_< f,W\}$ in $T^{(1)}$. The term $\{Pf,W\}$ only contributes to $Qf^{(2)}$ since their Fourier modes satisfy $\bk\cdot\bp\neq 0$, and the term $\{Q_<f,W\}$ contributes to all the terms $g^{(2)},Pf^{(2)},Qf^{(2)}$. 
	We have 
	\begin{equation}\label{EqT1}
\eps^2\|\{ P f+Q_< f,W\}\|_{C^{r-1}}\leq 2\eps^2\|f\|_{C^{r}}\|W\|_{C^{r}}\leq \eps^2 q\|f\|_{C^{r}}^2.		
	\end{equation}

	Next, to estimate $\{\{H_0,W\},W\}$, we rewrite
	\begin{equation}\label{EqT2}
	 \eps^2\{\{H_0,W\},W\}= \eps^2\{-Q_<f,W\}+ \eps^2\{\{\eps Pf+\eps Q_< f,W\},W\},		
	\end{equation}
	and estimate each summand in the same way as above. 
	\hskip 2in q.e.d. 	

	We next estimate carefully each item in the statement of the lemma for the first step of iteration, after which we perform induction. 

	First, we consider $g^{(2)}=g+\eps_2\langle T^{(1)}\rangle+\eps \langle R^{(1)}\circ \phi_{\eps W}\rangle $. By the Fourier cutoff $Q_\geq$ and the definition of $K_n$, we get $$\|R^{(1)}\circ \phi_{\eps W}\|_{C^2}\leq \|R^{(1)}\|_{C^2}\| \phi_{\eps W}\|_{C^2}\leq K_1^{-(r-2)}\|R^{(1)}\|_{C^r}\| \phi_{\eps W}\|_{C^{r-1}}\leq C (q\eps_1)^{r-2}\ll \eps_1.$$
	The $C^2$ estimate $\eps_2\langle T^{(1)}\rangle$ is of order $\eps_2$ given by \eqref{EqT0}. Thus we get item (1).
	
	We next consider item (2). We need to estimate contributions from $T^{(1)}$ and $R\circ\phi_{\eps W}$. By the proof of \eqref{EqT0}, there are two summands in $T^{(1)} $ estimated in \eqref{EqT1} and \eqref{EqT2} respectively. For the first one, we get by the paragraph preceding the statement of Lemma \ref{LmEstimate}
	$$\|P\eps^2\{ P f+Q_< f,W\}\|_{C^{r'}}\leq \frac{\eps^2}{q^{r-r'-1}}\|\{ P f+Q_< f,W\}\|_{C^{r-1}}\leq C\eps^2q \frac{1}{q^{r-r'-1}}\leq \eps^{-\al}(\eps q)^2 \frac{1}{q^{r-r'}}.$$
	The estimate for the second summand is similar. Thus we get 
	$\|PT^{(1)}\|_{C^{2}}\leq \eps^{-\al}(\eps q)^2 \gamma.$
	We use $\eps^{\frac{1}{2N}}$ to give an upper bound of $\eps^{-\al}(\eps q)^2 $ in the statement. 
We next consider  the term $P(R\circ\phi_{\eps W})$ in \eqref{EqgPQ}. We have $\eps P(R\circ\phi_{\eps W})=\eps PR+\eps^2 P \int_0^1\{R,W\}\circ\phi^t_{\eps W}dt$ and $PR=0$. Note that $\eps^2\|\{R,W\}\circ\phi^t_{\eps W}\|_{C^{r-1}}\leq C\eps^2q\|f\|_{C^r}^2$, which has the same estimate as $\eps_2\|T^{(1)}\|_{C^{r-1}}$. 
Thus, by repeating the above estimate for $PT^{(1)}$,  we get the estimate of item (2) holds also for the term $P(R\circ\phi_{\eps W})$. 

We next consider item (3). For the first step of iteration, the estimate follows from that of \eqref{EqT0} and $R\circ \phi_{\eps W}$, thus is similar to item (2). The only difference is that we do not get negative powers of $q$ by considering a $C^{r'}$ norm with $r'<r$, since we the projection $Q_<$ contains all the low frequency terms $|\bk|<\min\{K_1,q\}$. 

	We next consider item (4). This is given by the Fourier cutoff $Q_\geq$ and the definition of $K_n$. We have $\|R^{(n)}\|_{C^s}\le C K_n^{s-t}\|R^{(n)}\|_{C^t},\ s<t,$ and $$K_n^{s-t}\leq (C\eps_{n+1}q)^{t-s}\leq (C(\eps_{n}q)^{2}\eps_n^{-\al})^{t-s}\leq (C\eps_{n}^{\frac{2}{N+1}-\al})^{t-s}\leq \eps_n$$ whenever $t-s\geq N+1$, and we always have $r_{n+1}-r_n\geq N+1$ by definition of $r_n$. 

The passage from step $n-1$ to $n$ is completely analogous. Indeed, we only need to take care of the contributions from $R^{(n)}\circ\phi_{\eps_n W^{(n)}}$ and $T^{(n)}$. The former is done in the last paragraph. We next show by induction how to estimate $T^{(n)}$ by choosing the function space $C^{r_n}$. In place of \eqref{EqT1}, we have 
\begin{equation*}
	\begin{aligned}
&\|\{ \eps P f^{(n-1)}+\eps_{n-1}Q_< f^{(n-1)},\eps_{n-1} W^{(n-1)}\}\|_{C^{r_{n}}}\\
&\leq \eps\eps_{n-1}\|P f^{(n-1)}\|_{C^{r_{n}+1}}\|W^{(n-1)}\|_{C^{r_{n}+1}}+\eps_{n-1}^2\|Q_< f^{(n-1)}\|_{C^{r_{n}+1}}\|W^{(n)}\|_{C^{r_{n}+1}}\\
&\leq\eps\eps_{n-1}\frac{1}{(cq)^{r-r_{n}-1}}q\|Q_<f^{(n-1)}\|_{C^{r_{n}+1}}+\eps_{n-1}^2q\|Q_< f^{(n-1)}\|_{C^{r_{n}+1}}^2\\
	\end{aligned}
\end{equation*}
The second term on the RHS would be bounded by $\eps_n/2$ using $\eps_n=\eps^{2-\al}_{n-1}q$. For the first term, we use the definition of $r_n=r-n(N+1)+\log_q \eps_{n-1}$ to bound the first term also by $\eps_n/2$. The other terms in $T^{(n)}$ are estimated similarly. 
These estimates are then sufficient to conclude the proof of the lemma inductively. 
\end{proof}

\appendix
\section{Abraham transversality theorem}\label{AppAbraham}
The proof of Proposition \ref{PropGeneric} is based on the following parametric transversality theorem of Abraham (see page 48 of \cite{ARK68} and \cite{Ab}). 

Suppose we are given the following data. 
\begin{enumerate}
	\item 
	Let $A$ be a topological space with Baire property, and $M_1$ and $M_2$ be second countable finite dimensional manifolds, $K\subset M_1$ a subset, $V\subset M_2$ a submanifold, and $F: \ A\to C^1(M_1,M_2)$ a map.
	\item $F$ is called a $C^1$ \emph{pseudo-representation} if the evaluation map $ev(TF):\ A\times TM_1\to TM_2$ via $ev(TF)(a,q)=T(Fa)q$ is continuous. 
	\item A $C^1$ pseudo-representation $F$ is called $C^r$-\emph{pseudo-transverse} to $V$ on $K$ if there exists a dense subset $D\subset A$ such that for each $a\in D$, there exist an open subset $B_a$ in a separable Banach space, a continuous map $\psi_a:\ B_a\to A$ and $a'\in B_a$ such that 
	\begin{enumerate}
		\item $\psi_a(a')=a$,
		\item the evaluation map $ev(F\psi_a):\ B_a\times M_1\to M_2$ is $C^r$ and transverse to $V$ on $a'\times K$.
	\end{enumerate}
\end{enumerate}
\begin{Thm}\label{ThmAbraham} Assume that $F:\ A\to C^1(M_1,M_2)$ is $C^r$ pseudo-transverse to $V$ on $K$ with 
	$$r\geq \max\{1,1+\mathrm{dim}M_1-\mathrm{codim}V\}.$$
	Let $R=\{a\in A\ |\ F(a) \mathrm{\ is\ transverse\ to\ } V \mathrm{\ at\ } K\}$.
	If $K=M_1$, then $R$ is residual in $A$, and if $V$ is a closed submanifod and $K\subset M_1$ is compact, then $R$ is open and dense in $A$. 
\end{Thm}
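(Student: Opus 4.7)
The plan is to follow the standard route through parametric transversality and Sard--Smale, adapted to handle the fact that $A$ is only a Baire topological space rather than a Banach manifold; this is precisely why the pseudo-representation/pseudo-transversality formalism was introduced by Abraham. Since the statement is the classical result cited from \cite{Ab,ARK68}, I would organize the argument as a reduction to a Banach-space parametric transversality theorem applied locally via the charts $\psi_a$, together with a Baire category argument to globalize.

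First I would localize. Cover $M_1$ by countably many open sets $U_j$ with compact closures (using second countability), and cover $V$ by countably many charts, so it suffices to prove that for each such pair the set $R_j=\{a\in A:F(a)\pitchfork V\text{ on }\overline{U_j}\}$ contains a residual set; then $R=\bigcap_j R_j$ is residual by the Baire property of $A$. For the open-and-dense conclusion under compactness of $K$ and closedness of $V$, the openness of each $R_j$ follows from transversality being a $C^1$-open condition and the continuity of the evaluation $ev(TF)$ built into the definition of pseudo-representation, so only density needs a separate argument.

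Next I would invoke pseudo-transversality: fix $a\in D$ and the associated open set $B_a$ in a separable Banach space with map $\psi_a:B_a\to A$ and $a'\in B_a$ such that the evaluation $\mathrm{ev}(F\psi_a):B_a\times M_1\to M_2$ is $C^r$ and transverse to $V$ on $\{a'\}\times K$. Then $W:=\mathrm{ev}(F\psi_a)^{-1}(V)$ is locally near $(a',K)$ a $C^r$ Banach submanifold (since $V$ has finite codimension in $M_2$), and the projection $\pi:W\to B_a$ is a $C^r$ Fredholm map of index $\dim M_1-\mathrm{codim}\,V$. The hypothesis $r\ge\max\{1,1+\dim M_1-\mathrm{codim}\,V\}$ is exactly the regularity needed to apply the Sard--Smale theorem: the set of regular values of $\pi$ is residual in a neighborhood of $a'$ in $B_a$, and a standard fiber-transversality lemma identifies regular values of $\pi$ with parameters $b$ for which $F\psi_a(b)$ is transverse to $V$ on the relevant piece of $K$. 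Pulling this residual set back by continuity of $\psi_a$ and using density of $D\subset A$ shows that $R_j$ is dense in $A$; combined with Baire, this yields residuality.

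The main obstacle is the bookkeeping of the last step: transferring a residual set in the parameter Banach space $B_a$ back to a dense set in $A$ through the merely continuous map $\psi_a$, and then patching the local information at all $a\in D$ into a single residual set in $A$. This is handled by noting that $\psi_a(a')=a$ together with density of $D$ places an element of $R_j$ arbitrarily close to any fixed point of $A$, and that countably many such density statements combine to residuality via Baire; the Fredholm index estimate enters only to guarantee Sard--Smale applies. The compactness/closedness refinement for the second conclusion is then a direct continuity argument using the fact that transversality to a closed submanifold is preserved under $C^1$-small perturbations uniformly on compact sets. I would simply refer to \cite{Ab} for the complete technical write-up once these steps are sketched.
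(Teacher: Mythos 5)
The paper does not actually prove Theorem~\ref{ThmAbraham}: it cites it from Abraham and Abraham--Robbin--Kelley and, as an illustration, only sketches the finite-dimensional counterpart (Hirsch's version), where one applies ordinary Sard to the composition $\pi\circ ev(F)^{-1}:V\to A$. Your proposal fleshes out the infinite-dimensional form of that same idea: localize, form $W=\mathrm{ev}(F\psi_a)^{-1}(V)$ as a Banach submanifold, observe that $\pi:W\to B_a$ is Fredholm of index $\dim M_1-\mathrm{codim}\,V$, invoke Sard--Smale (whose regularity threshold is exactly the stated bound $r\ge\max\{1,1+\dim M_1-\mathrm{codim}\,V\}$), and globalize by Baire. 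This is indeed the substance of Abraham's argument, so your route matches the one the paper points to rather than diverging from it.

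One step in your sketch is too loose to close the first assertion ($K=M_1$ gives $R$ residual). You reduce to the sets $R_j=\{a:F(a)\pitchfork V\ \text{on}\ \overline{U_j}\}$ and argue that each $R_j$ is \emph{dense} via pseudo-transversality, but a countable intersection of merely dense sets in a Baire space need not be residual; you need each $R_j$ to be open, or at least to contain an open dense set. You claim openness only for the second assertion (when $V$ is closed and $K$ compact), yet the first assertion does not assume $V$ closed, and openness of ``$\pitchfork V$ on $\overline{U_j}$'' is \emph{not} automatic for a non-closed $V$: a $C^1$-small perturbation can push the image off $V$ and onto its boundary. The standard remedy, which your outline should mention, is to refine the cover so that on each $\overline{U_j}$ the map lands in a single submanifold chart of $V$, replace $V$ there by a relatively closed slice, and establish open-denseness for each such localized condition before intersecting. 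With that repair, and with the caveat that pulling a residual subset of $B_a$ forward through the merely continuous $\psi_a$ gives only density (which is what you actually use), the outline is sound and agrees with the cited proof.

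Finally, a small bookkeeping point: your reduction discusses only the cover of $M_1$; you also need a countable cover of $V$ by submanifold charts (again using second countability of $M_2$) to make the localized transversality condition both well-posed and open, and the two covers must be intersected compatibly. This is exactly the ``bookkeeping'' you flagged as the main obstacle; naming the two-cover device makes the obstacle disappear.
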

This theorem is a bit abstract. It says that if the space $A$ is so large that the perturbations can be
made inside it in terms of $\psi_a$, then the points in $a\in A$ such that $F_a$ is transverse to $V$ is a residual in $A$. Notice also that $A$ needs not to be a Banach space but only a Baire space. This is useful when $M$ is noncompact or $r$ is infinity. 

To make it  look less formidable, we mention its finite dimensional counterpart (Theorem 2.7 of \cite{Hi}). 
\begin{Thm}
	Let $A,\ M_1,\ M_2$ be $C^r$ manifolds without boundary, and $V\subset M_2$ be a $C^r$ submanifold. Let $F:\ A\to C^r(M_1,M_2)$ satisfying 
	\begin{enumerate}
		\item the evaluation map $ev(F):\ A\times M_1\to M_2$ via $(a,x)\mapsto F_a(x)$ is $C^r$,
		\item $ev(F)$ is transverse to $V$,
		\item $r\geq \max\{1,1+\mathrm{dim} M_2+\mathrm{dim} V-\mathrm{dim} M_1\}$.
	\end{enumerate}
	Then the set $\{a\in A\ |\ F_a\pitchfork V\}$ is residual. If $V$ is closed, then the set is open.  
\end{Thm}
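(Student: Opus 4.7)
The plan is to prove this finite-dimensional parametric transversality theorem by the classical pullback-and-Sard argument. First I would form the preimage $W := ev(F)^{-1}(V) \subset A \times M_1$. By hypothesis (2), the evaluation map $ev(F)$ is transverse to $V$, so the implicit function theorem gives that $W$ is a $C^r$ submanifold of $A \times M_1$ with codimension equal to $\mathrm{codim}_{M_2}V = \dim M_2 - \dim V$. Hence
\[
\dim W = \dim A + \dim M_1 - \dim M_2 + \dim V.
\]

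Next I would study the projection $\pi:W \to A$, $(a,x)\mapsto a$. The heart of the argument is the linear-algebraic identity: a point $a \in A$ is a regular value of $\pi$ if and only if $F_a \pitchfork V$. To see this, pick $(a,x)\in W$ and unpack the transversality of $ev(F)$ at $(a,x)$: for any $w \in T_{F_a(x)}M_2$, there exist $(\alpha,v)\in T_aA \times T_xM_1$ and $u\in T_{F_a(x)}V$ with $T_{(a,x)}ev(F)\cdot(\alpha,v) + u = w$. If $a$ is a regular value of $\pi$, one can solve with $\alpha = 0$ (modulo tangent vectors to $W$ killed by $d\pi$), which is exactly the statement $T_xF_a(T_xM_1) + T_{F_a(x)}V = T_{F_a(x)}M_2$, i.e. $F_a \pitchfork V$ at $x$. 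The converse direction is analogous.

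Then I would apply Sard's theorem to $\pi:W \to A$. The dimensional hypothesis (3) is precisely what guarantees $\pi$ is $C^r$ with $r$ large enough for Sard to apply (giving that critical values form a meager set of measure zero); noting that if $F_a(M_1) \cap V = \emptyset$ then $a$ is vacuously a regular value of $\pi$, the set of $a \in A$ with $F_a \pitchfork V$ is exactly the set of regular values of $\pi$, which is residual.

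For the openness assertion when $V$ is closed, I would argue by contradiction: if $F_{a_n}\pitchfork V$ fails for a sequence $a_n \to a$ with $F_a \pitchfork V$, pick witnesses $x_n$ where transversality fails. Use that $V$ is closed together with the $C^r$-continuity of $F$ to extract a limit point $x$ (after a restriction to a compact set, which is the reason the original Abraham formulation invokes $K$ compact); at $(a,x)$ one deduces failure of transversality, contradicting $F_a \pitchfork V$. The main subtlety here, and the step I'd expect to be most delicate, is the passage from pointwise transversality of $ev(F)$ to the equivalence ``regular value of $\pi \Leftrightarrow F_a \pitchfork V$'' — it is just linear algebra, but one must track the direct-sum decomposition of $T_{(a,x)}(A\times M_1)$ inherited from $T_{(a,x)}W$ carefully to avoid an off-by-one error in the dimension count.
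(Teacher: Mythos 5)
Your proof takes exactly the same route as the paper's one-line sketch: pull back $V$ under $ev(F)$ to get a submanifold $W\subset A\times M_1$, observe that $F_a\pitchfork V$ if and only if $a$ is a regular value of $\pi|_W$, and apply Sard's theorem to $\pi|_W\colon W\to A$; the linear-algebra argument you sketch for the equivalence and your observation that the openness statement requires compactness (of $M_1$ or of the relevant domain) are both correct. One small remark: since $\dim W - \dim A = \dim M_1 - \dim M_2 + \dim V$, the Sard condition is $r\geq \max\{1,\,1+\dim M_1+\dim V-\dim M_2\}$, so the inequality in hypothesis (3) as written has $M_1$ and $M_2$ interchanged.
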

The proof is to apply the Sard theorem to the map $\pi\circ ev(F)^{-1}:\ V\to A$ where $\pi:\ A\times M_1\to A$ is the projection, noticing that $F_a\pitchfork V$ iff $a$ is a regular value of the map $\pi\circ ev(F)^{-1}$.

We also refer readers to \cite{R1,R2} for appliations of Abraham's transversality theorem to obtain some results on generic Hamiltonian dynamics. 
\section{Reeb dynamics and Hamiltonian dynamics}\label{SRelation}
In this section, we study the relation between the Reeb dynamics of the perturbed contact form and the corresponding Hamiltonian dynamics. This result is known experts and we do not claim any originality. We present it here for readers' convenience. 

In our case, we have a contact manifold $(\partial E_a,\lb)$ as in the introduction and the Reeb vector field $R$ determined by $\iota_R d\lb=0$ and $\iota_R \lb=1$ coincides with the Hamiltonian vector field of the system \eqref{EqHam0} on the energy level $1$. Perturbing $\lb$ into $\lb_\eps:=(1+\eps h)\lb$, the equations $\iota_{R_\eps} d\lb_\eps=0$ and $\iota_{R_\eps} \lb_\eps=1$ determines a Reeb vector field $R_\eps$ on $\partial E_a$ that is $O(\eps)$ close to $R$ in the $C^1$ norm by implicit function theorem. In the following, we shall denote by $H$ the Hamiltonian \eqref{EqHam0}. 

\begin{Prop}
	The Reeb vector field $R_\eps$ is conjugate to the Hamiltonian flow of a Hamiltonian $H_\eps$ that is an $O(\eps)$ perturbation of $H-1$, restricted to the zeroth energy level set $\Sigma$ that is a $O(\eps)$ perturbation of $\partial E_a$. 
\end{Prop}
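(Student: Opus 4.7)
The plan is to convert the conformal rescaling $\lb\mapsto(1+\eps h)\lb$ of the contact form into a geometric deformation of the hypersurface $\pt E_a$ via the Liouville flow. Once done, the perturbed Reeb dynamics will be manifestly realized as a Hamiltonian flow on the zero set of an $O(\eps)$-perturbation of $H-1$.

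First I would introduce the Liouville vector field $Z=\tfrac{1}{2}\sum_i(x_i\pt_{x_i}+y_i\pt_{y_i})$, which satisfies $\iota_Z\lb=0$ and $\mathcal{L}_Z\lb=\lb$, so that its flow $\phi_s$ obeys $\phi_s^*\lb=e^s\lb$. I would then define
$$\psi:\pt E_a\to\R^{2N},\qquad \psi(p):=\phi_{\log(1+\eps h(p))}(p),$$
and set $\Sigma:=\psi(\pt E_a)$. For $\eps$ sufficiently small, $\psi$ is a $C^r$-diffeomorphism onto its image with $\|\psi-\mathrm{id}\|_{C^r}=O(\eps)$, so $\Sigma$ is an $O(\eps)$-perturbation of $\pt E_a$. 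Differentiating along $v\in T_p\pt E_a$ gives $d\psi(v)=(d\phi_{t(p)})(v)+dt(v)\,Z(\psi(p))$ with $t(p)=\log(1+\eps h(p))$, and applying $\lb$ while using both $\phi_s^*\lb=e^s\lb$ and $\iota_Z\lb=0$ yields $\psi^*(\lb|_\Sigma)=(1+\eps h)\,\lb|_{\pt E_a}$. Hence $\psi$ is a contactomorphism $(\pt E_a,(1+\eps h)\lb)\to(\Sigma,\lb|_\Sigma)$, and it conjugates $R_\eps$ with the Reeb vector field $R_\Sigma$ of $\lb|_\Sigma$.

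It remains to realize $R_\Sigma$ as the restriction of a Hamiltonian flow. In the Liouville coordinates near $\pt E_a$, every point $q$ has a unique representation $q=\phi_s(\pi(q))$ with $\pi(q)\in\pt E_a$, and since $H(\phi_s(p))=e^s H(p)$ and $H|_{\pt E_a}=1$, one has $\Sigma=\{q: H(q)=1+\eps h(\pi(q))\}$. The natural candidate is therefore
$$H_\eps(q):=H(q)-\bigl(1+\eps h(\pi(q))\bigr),$$
defined on a neighborhood of $\pt E_a$. This is an $O(\eps)$-perturbation of $H-1$ in $C^r$ and has $\Sigma$ as its regular zero set. On the contact-type hypersurface $\Sigma$, both $X_{H_\eps}|_\Sigma$ and $R_\Sigma$ lie in the one-dimensional characteristic line field $\ker(d\lb|_{T\Sigma})$, hence are parallel. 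A further conformal rescaling $H_\eps\mapsto G\circ H_\eps$ with $G(0)=0$ and $G'(0)=1/\lb(X_{H_\eps})|_\Sigma$ forces the proportionality constant to $1$ without moving $\Sigma$, so that $X_{H_\eps}|_\Sigma=R_\Sigma$; composing with $\psi$ then gives the claimed conjugacy. The main subtlety---and what I expect to be the only real obstacle---is precisely this rescaling step: one must show that $\lb(X_{H_\eps})$ is uniformly nonvanishing on $\Sigma$ and that the rescaling only changes $H_\eps$ by $O(\eps)$. Both follow from the Euler identity $\lb(X_H)=\pm H$ (forced by the degree-$2$ homogeneity of $H$ and the homogeneity properties of $\lb$ and $\omega$), which equals $\pm 1$ on $\pt E_a$ and hence $\pm 1+O(\eps)$ on the $C^r$-nearby hypersurface $\Sigma$, so $G$ can be taken with $G'(0)=\pm 1+O(\eps)$.
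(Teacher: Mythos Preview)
Your approach is correct and takes a genuinely different route from the paper's. The paper argues abstractly: it observes that $\iota_{R_\eps}d\lb_\eps$, viewed as a $1$-form on the ambient $\R^{2N}$ at points of $\pt E_a$, must be a multiple $(1+g_\eps)\,dH$ of $dH$ (both annihilate $T\pt E_a$); it then applies Moser's trick to produce a diffeomorphism $\psi_\eps$ with $\psi_\eps^*d\lb_\eps=d\lb$, so that pulling back yields $\iota_{D\psi_\eps^{-1}R_\eps}d\lb=d\bigl(\psi_\eps^*((1+g_\eps)(H-1))\bigr)$ on $\Sigma=\psi_\eps^{-1}(\pt E_a)$, exhibiting $D\psi_\eps^{-1}R_\eps$ as a Hamiltonian vector field for the standard symplectic form. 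Your construction is instead fully explicit, exploiting the Liouville structure: pushing $\pt E_a$ along the radial flow by $\log(1+\eps h)$ realizes the conformal factor geometrically, and the degree-$2$ homogeneity of $H$ lets you write $\Sigma$ as a level set directly. This is essentially the mechanism the paper uses for the \emph{converse} proposition (Hamiltonian $\to$ Reeb), where $\Sigma$ appears as the radial graph $\{f(z)z\}$ with $f=\sqrt{1+\eps h}$; you have run that argument in reverse. Your approach buys concreteness and avoids Moser's lemma; the paper's buys generality (it does not rely on the quadratic form of $H$).

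One imprecision to fix: in the final step you write $H_\eps\mapsto G\circ H_\eps$ with $G'(0)=1/\lb(X_{H_\eps})|_\Sigma$, but $\lb(X_{H_\eps})|_\Sigma=\pm(1+\eps h\circ\pi)$ is a nonconstant function on $\Sigma$, while $G'(0)$ is a single number; composing with a scalar function $G$ can only rescale $X_{H_\eps}|_\Sigma$ by a constant. The correct move is to multiply $H_\eps$ by the function $\rho:=1/\lb(X_{H_\eps})$ (extended off $\Sigma$): since $H_\eps=0$ on $\Sigma$ one has $X_{\rho H_\eps}|_\Sigma=\rho\,X_{H_\eps}|_\Sigma=R_\Sigma$, and $\rho H_\eps=\pm H_\eps/(1+\eps h\circ\pi)$ is still an $O(\eps)$ perturbation of $H-1$.
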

\begin{proof}

	The equation $\iota_{R_\eps} d\lb_\eps=0$ implies that $\iota_{R_\eps} d\lb_\eps$ is proportional to $dH$ when restricted to $\partial E_a$, since both vanish when applied to a vector in  $T\partial E_a$, and the proportion is $O(\eps)$ close to 1. So we set $$\iota_{R_\eps} d\lb_\eps=(1+g_\eps)d(H-1)=d((1+g_\eps)(H-1))$$ when restricted on $\partial E_a$, where the proportion $1+g_\eps$ is determined by the equation $\iota_{R_\eps} \lb_\eps=1$. 
	
	Since $d\lb_\eps$ is an exact 2-form for each $\eps$, then Moser's trick gives a diffeomorphism $\psi_\eps$ satisfying $\psi_\eps^* d\lb_\eps=d\lb$. Indeed, the diffeomorphism is defined by the differential equation $\frac{d}{d\eps}\psi_\eps=X_\eps\circ\psi_\eps$, where the vector field $X_\eps$ is solved from $\frac{d}{d\eps}\lb_\eps=-\iota_{X_\eps}d\lb_\eps$ by the nondegenderacy of $d\lb_\eps$. The equation $\psi_\eps^* d\lb_\eps=d\lb$ is verified by $$0=\frac{d}{d\eps}\psi_\eps^* d\lb_\eps=\psi_\eps^* (\frac{d}{d\eps}d\lb_\eps+d\iota_{X_\eps}d\lb_\eps+\iota_{X_\eps}dd\lb_\eps). $$
	
	The diffeomorphism $\psi_\eps$ is identity outside a small neighborhood of $\partial E_a$ since $h$ is compactly supported. Denote $\Sigma=\psi_\eps^{-1} (\partial E_a)$. Applying $\psi_\eps^*$ to the equation $\iota_{R_\eps} d\lb_\eps=(1+g_\eps) dH$, we get 
	$$\iota_{D\psi_\eps^{-1} R_\eps} d\lb= d\psi^*_\eps ((1+g_\eps)(H-1)). $$
	Since $d\lb$ is the standard symplectic form on $\R^{2N}$, the equation implies that the Reeb flow $R_\eps$ on $\partial E_a$ is conjugate to the Hamiltonian flow on $\Sigma$ given by the Hamiltonian $\psi^*_\eps ((1+g_\eps)(H-1))$ which is $O(\eps)$ close to $H-1$. 
	
\end{proof}
On the other hand, the next proposition shows how to go from a Hamiltonian flow to a Reeb flow. 
\begin{Prop}
	Suppose the Hamiltonian $H_\eps$ that is an $O(\eps)$-perturbation of $H-1$, then there exists a function $f:\ \partial E_a\to \R$ that is $O(\eps)$ close to 1, such that the  Hamiltonian flow of $H_\eps $ on the zeroth energy level set is conjugate to the Reeb flow on the contact manifold $(\partial_a E, f^2\lb). $
\end{Prop}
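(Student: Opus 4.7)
The plan is to find a pair $(f,\Psi)$ consisting of a function $f:\partial E_a\to\R$ close to $1$ and a diffeomorphism $\Psi:\partial E_a\to\Sigma_\eps:=H_\eps^{-1}(0)$ such that $\Psi_*R_{f^2\lambda}=X_{H_\eps}|_{\Sigma_\eps}$, giving the required strict conjugation of the flows.

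First I would identify $\Sigma_\eps$ with $\partial E_a$ using the Liouville radial flow $\phi^Y$ of the Liouville vector field $Y$ (which satisfies $\mathcal L_Y\lambda=\lambda$ and $\iota_Y\lambda=0$). Since $\Sigma_\eps$ is $O(\eps)$-close to $\partial E_a$ and transverse to $Y$, one obtains a diffeomorphism $\Phi_0(x):=\phi^Y_{t_\eps(x)}(x)$ where $t_\eps:\partial E_a\to\R$ is a small function uniquely determined by $\Phi_0(\partial E_a)=\Sigma_\eps$. The identity $(\phi^Y_s)^*\lambda=e^s\lambda$ gives $\Phi_0^*(\lambda|_{\Sigma_\eps})=e^{t_\eps}\lambda$, and pulling back $X_{H_\eps}|_{\Sigma_\eps}$ via $\Phi_0$ produces a vector field $V$ on $\partial E_a$ close to $R_\lambda$. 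The condition $dH_\eps|_{T\Sigma_\eps}=0$ forces $V$ to lie in the kernel of $d(e^{t_\eps}\lambda)|_{T\partial E_a}$, so $V$ is parallel to the Reeb vector field $R_{e^{t_\eps}\lambda}$ but with a non-constant proportionality factor close to $1$, and hence is not itself a Reeb vector field of a conformal rescaling of $\lambda$; an additional diffeomorphism of $\partial E_a$ is needed.

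The main step is an implicit function theorem argument applied to the map
\[
\Theta:(f,\Psi)\mapsto\Psi_*R_{f^2\lambda},
\]
defined near $(1,\mathrm{id})$ in $C^k(\partial E_a)\times\mathrm{Diff}^k(\partial E_a)$. Using the pointwise formula $R_{g\lambda}=g^{-1}R_\lambda+\tilde w_g$ with $\tilde w_g\in\xi:=\ker\lambda$ uniquely determined by the symplectic equation $\iota_{\tilde w_g}d\lambda|_\xi=g^{-2}dg|_\xi$ on the contact distribution, one computes the linearization at $(1,\mathrm{id})$ as
\[
D\Theta|_{(1,\mathrm{id})}(\delta f,v)=[v,R_\lambda]-2(\delta f)R_\lambda+2\tilde w_{\delta f}.
\]
Decomposing a target vector field as $W=aR_\lambda+b$ with $b\in\xi$, surjectivity is verified by first choosing the $\xi$-part of $v$ so that its Lie bracket with $R_\lambda$ matches $b-2\tilde w_{\delta f}$ in $\xi$, then adjusting $\delta f$ and the Reeb component of $v$ to fix the $R_\lambda$-part; the two functions' worth of extra freedom (namely $\delta f$ together with the Reeb component of $v$) handle the one-function gap. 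Applying the IFT then yields $(f,\Psi)$ with $\Theta(f,\Psi)=V$, and the composition $\Phi_0\circ\Psi^{-1}:\partial E_a\to\Sigma_\eps$ gives the desired conjugation.

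The main obstacle will be the surjectivity of the linearization in an appropriate function-space category, which reduces to solving a cohomological equation $\mathcal L_{R_\lambda}v_\xi=\beta$ along the Reeb orbits of $\lambda$. Since on $\partial E_a$ the Reeb flow of $\lambda$ is the integrable torus flow generated by the quadratic Hamiltonian $H$, the equation carries small-denominator issues when $\omega=(\pi/a_1,\ldots,\pi/a_N)$ fails a Diophantine condition; however, the residual finite-dimensional cohomological obstructions are absorbed by the additional one-function freedom coming from $\delta f$. The IFT argument can then be closed in a sufficiently smooth Banach category (potentially with a Nash--Moser iteration if $C^\infty$ is desired), producing the proposition.
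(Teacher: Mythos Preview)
Your first paragraph already \emph{is} the paper's proof. Writing $\Sigma_\eps=H_\eps^{-1}(0)$ as a radial graph $\{f(z)z:z\in\partial E_a\}$ (equivalently, flowing along the Liouville vector field) gives a diffeomorphism $\psi:\partial E_a\to\Sigma_\eps$, and the homogeneity of $\lambda=\tfrac12\omega_0(z,\cdot)$ under dilations yields
\[
\psi^*(\lambda|_{\Sigma_\eps})=f^2\,\lambda|_{\partial E_a}.
\]
That single line is the entire argument in the paper. It implies that $\psi$ takes the characteristic line field $\ker(\omega_0|_{T\Sigma_\eps})$ to $\ker\big(d(f^2\lambda)|_{T\partial E_a}\big)$; since $X_{H_\eps}|_{\Sigma_\eps}$ and $R_{f^2\lambda}$ span these respective line fields, $\psi^{-1}_*X_{H_\eps}$ is a positive multiple of $R_{f^2\lambda}$. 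In this appendix ``conjugate'' means exactly this orbit equivalence (same unparametrized trajectories, hence same periodic orbits), which is all that is needed for the closing-lemma application.

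Your subsequent steps seek a \emph{strict} conjugation $\Psi_*R_{f^2\lambda}=X_{H_\eps}$, which is a much stronger statement than the proposition asserts or requires. Worse, your proposed IFT scheme has a genuine gap: the linearized equation $\mathcal L_{R_\lambda}v_\xi=\beta$ along the torus flow with frequency $\omega=(\pi/a_1,\ldots,\pi/a_N)$ has, for Liouville $\omega$, an infinite-dimensional cokernel in any reasonable $C^k$ or $C^\infty$ category, so the single extra scalar freedom $\delta f$ cannot absorb the obstructions and the surjectivity claim fails. No Diophantine hypothesis on $a$ is available here. Drop everything after the radial-graph identification; the computation $\psi^*\lambda=f^2\lambda$ already finishes the proof.
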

\begin{proof}
	We consider the zeroth energy level set $\Sigma$ of $H_\eps$. By the implicit function theorem, it is known that $\Sigma$ that is an $O(\eps)$-perturbation of $\partial E_a$. So we write $\Sigma$ as a radial graph over $\partial E_a$, i.e. $\Sigma=\{f(z) z\ |\ z\in \partial E_a\subset \R^{2N}\}$, which induces a diffeomorphism $\psi: \partial E_a\to \Sigma$ via $z\mapsto f(z)z$, where $f$ is $O(\eps)$ close to 1. Noting that $\lb_z=\frac{1}{2}\omega_0(z,\cdot)$ where $\omega_0$ is the standard symplectic form on $\R^{2N}$, we have 
	\begin{equation*}
		\begin{aligned}
			\psi^*\lb_z|_{\Sigma}(h)=\frac{1}{2}\psi^*\omega_0(f(z)z,(df(z) h) z+f(z)h)=f(z)^2\frac{1}{2} \omega_0(z,h)=f(z)^2\lb_z|_{\partial E_a}(h). 
		\end{aligned}
	\end{equation*}

\end{proof}

\end{document}